\DeclareMathAlphabet{\pazocal}{OMS}{zplm}{m}{n}
\newtheorem{theorem}{Theorem}[section]
\newtheorem{thmx}{Hypothesis}
\newtheorem{corollary}[theorem]{Corollary}
\newtheorem{lemma}[theorem]{Lemma}
\newtheorem{prop}[theorem]{Proposition}
\numberwithin{equation}{section}
\renewcommand{\leq}{\leqslant}
\renewcommand{\geq}{\geqslant}
\newcommand{\R}{\ensuremath{\mathbf{R}}}
\newcommand{\C}{\ensuremath{\mathbf{C}}}
\newcommand{\Q}{\ensuremath{\mathbf{Q}}}
\newcommand{\ep}{\epsilon}
\newcommand{\vep}{\varepsilon}
\newcommand{\ra}{\rightarrow}
\newcommand{\Hyp}[1]{\textbf{P#1}}
\newcommand\be{\begin{equation}}
\newcommand\ee{\end{equation}}
\newcommand\bi{\begin{itemize}}
\newcommand\ei{\end{itemize}}
\newcommand\ben{\begin{enumerate}}
\newcommand\een{\end{enumerate}}
\renewcommand{\d}[1]{\;\operatorname*{d}\!#1}
\newcommand\gammaf{\gamma_{\!\!f}}
\newcommand\tgammaf{\widetilde{\gamma}_{\!\!f}}
\newcommand\gammaL{\gamma_{\!L}}
\newcommand\Phii{\Phi_f}
\newcommand\Phiidual{\Phi_{\overline f}}
\newcommand{\selberg}{\pazocal S}
\newlist{Hlist}{enumerate}{1}
\setlist[Hlist]{label=\Hyp{\arabic*}}
\newlist{selberglist}{enumerate}{1}
\setlist[selberglist]{label=\roman*}
\DeclareMathOperator{\res}{res}
\DeclareMathOperator*{\GL}{GL}
\DeclareMathOperator{\erfc}{erfc}
\title[Gaps between zeros of $\GL(2)$ $L$-functions]{Gaps between zeros of $\GL(2)$ $L$-functions}
\author[O. Barrett]{Owen Barrett}
\address{Department of Mathematics, Yale University, New Haven, CT 06511, USA}
\email{Owen.Barrett@gmail.com}
\author[B. McDonald]{Brian McDonald}
\address{Department of Mathematics, University of Rochester, Rochester, NY 14627, USA}
\email{bmcdon11@u.rochester.edu}
\author[S.\,J. Miller]{Steven J. Miller}
\address{Department of Mathematics \& Statistics, Williams College, Williamstown, MA 01267, USA}
\email{sjm1@williams.edu, Steven.Miller.MC.96@aya.yale.edu}
\author[P. Ryan]{Patrick Ryan}
\address{Department of Mathematics, Harvard University, Cambridge, MA 02138, USA}
\email{ryan880@gmail.com}
\author[C. Turnage-Butterbaugh]{Caroline L. Turnage-Butterbaugh}
\address{Department of Mathematics, North Dakota State University, Fargo, ND 58108, USA}
\email{cturnagebutterbaugh@gmail.com}
\author[K. Winsor]{Karl Winsor}
\address{Department of Mathematics, University of Michigan\\Ann Arbor, MI 48109}
\email{krlwnsr@umich.edu}
\begin{document}

%%%%%%%%%%%%%%%%% ABSTRACT %%%%%%%%%%%%%%%%%

\begin{abstract}
Let $L(s,f)$ be an $L$-function associated to a primitive (holomorphic or Maass) cusp form $f$ on $\GL(2)$ over $\Q$. Combining mean-value estimates of Montgomery and Vaughan with a method of Ramachandra, we prove a formula for the mixed second moments of derivatives of $L(1/2+it,f)$ and, via a method of Hall, use it to show that there are infinitely many gaps between consecutive zeros of $L(s,f)$ along the critical line that are at least $\sqrt 3 = 1.732\ldots$ times the average spacing. Using general pair correlation results due to Murty and Perelli in conjunction with a technique of Montgomery, we also prove the existence of small gaps between zeros of any primitive $L$-function of the Selberg class. In particular, when $f$ is a primitive holomorphic cusp form on $\GL(2)$ over $\Q$, we prove that there are infinitely many gaps between consecutive zeros of $L(s,f)$ along the critical line that are at most $0.823$ times the average spacing. 
\end{abstract}

\maketitle

\tableofcontents

\vspace{.25in}

%%%%%%%%%%%%%%%%% INTRODUCTION %%%%%%%%%%%%%%%%%

\section{Introduction}
Let $f$ be a primitive form on $\GL(2)$ over $\Q$ with level $q$, which we consider to be fixed. Then $f$
corresponds either to a primitive holomorphic cusp form or to a primitive Maass cusp
form. For $\Re(s)\gg1$, let
\begin{equation}\label{Ldefintion}
L(s,f)\ := \ \sum_{n=1}^{\infty}\frac{a_f(n)}{n^s}
\end{equation}
denote the $L$-function of degree 2 associated to $f$ as defined by Godement and
Jacquet~\cite{GJ}. Here, for any choice of $f$, we are normalizing so that
$a_f(1)=1$ and the critical line is $\Re(s)=1/2$. We study the
vertical distribution of the nontrivial zeros of $L(s,f)$, which we denote by
$\rho_{\!f} = \beta_{\!f} + i\gammaf$, where $\beta_{\!f}, \gammaf\in\R$ and
$0<\beta_{\!f}<1$. The analogous problems for normalized gaps between consecutive zeros of the Riemann zeta-function and for the Dedekind zeta-function of a quadratic number field have been studied extensively. For example, see {\cite{Bredberg,Bui,Bui2,Bui3,BHTB,BMN,CGGGHB,CGG,CGG2,FW,GGOS,Hall,Hall1, Montgomery, MO, Mu, ng1, Sound, caroline}. 

It is known (see Theorem 5.8 of \cite{IK}) that
\begin{equation}\label{countzeros}
  N(T,f)\ := \ \sum_{0<\gammaf \leq T} 1
  \ = \ \frac{T}{\pi}\log \frac{\sqrt{|q|}T}{2\pi e} +O\left(\log \mathfrak q(iT,f)\right)
\end{equation}
for $T\geq1$ with an implied absolute constant. Here $\mathfrak q(s,f)$ denotes the analytic conductor of $L(s,f)$. Consider the
sequence $0 \leq \gammaf(1)\leq \gammaf(2) \leq \cdots\leq \gammaf(n) \leq \cdots$
of consecutive ordinates of the nontrivial zeros of $L(s,f)$. By \eqref{countzeros}, it follows that the average size of
$\gammaf(n+1) - \gammaf(n)$ is
\begin{equation}
\frac{\pi}{\log\left(\sqrt{|q|}\gammaf(n)\right)}.
\end{equation}
Let
\begin{equation}\label{eq:mulambdadefs}
  \Lambda_f\ := \  \limsup_{n\to \infty}\,\frac{\tgammaf(n+1)-\tgammaf(n)}{\pi / \log\left(\sqrt{|q|}\tgammaf(n)\right)}
  \qquad \text{and} \qquad
  \lambda_f\ := \  \limsup_{n\to \infty}\,
  \frac{\gammaf(n+1)-\gammaf(n)}{\pi / \log\left(\sqrt{|q|}\gammaf(n)\right)},
\end{equation}
where $\tgammaf(n)$ corresponds to the $n$th nontrivial zero of $L(s,f)$ on the critical line $\Re(s) = 1/2$. Note that under the assumption of the Generalized Riemann Hypothesis for $L(s,f)$, we have $\Lambda_f = \lambda_f$. Unconditionally, it is certainly true that $\Lambda_f  \geq\lambda_f\geq1$, however we expect that $\Lambda_f = \lambda_f= \infty$. Towards a lower bound on $\lambda_f$, we prove the following unconditional result for $\Lambda_f$.

\begin{theorem}\label{thm:gaps}Let $L(s,f)$ be a primitive $L$-function on $\GL(2)$ over $\Q$. Then $\Lambda_f \geq\sqrt{3}=1.732\ldots$.
\end{theorem}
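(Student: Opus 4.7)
The plan is to use Hall's classical method, which converts a uniform upper bound on gaps between consecutive critical-line zeros into an inequality between the second moments of $L(1/2+it,f)$ and $L'(1/2+it,f)$ via the Wirtinger inequality. Suppose for contradiction that $\Lambda_f < \sqrt{3}$, and fix $c$ with $\Lambda_f < c < \sqrt{3}$. By the limsup definition of $\Lambda_f$, for all sufficiently large $T$ every consecutive pair $\tgammaf(n),\tgammaf(n+1)\in[T,2T]$ of critical-line zero ordinates satisfies $\tgammaf(n+1)-\tgammaf(n)\leq c\pi/\log(\sqrt{|q|}T)$. Since $L(1/2+it,f)$ vanishes at each $\tgammaf(n)$, applying Wirtinger's inequality on each interval $[\tgammaf(n),\tgammaf(n+1)]$ and summing gives
\be
\int_T^{2T}|L(\tfrac12+it,f)|^2\,\d{t} \;\leq\; \left(\frac{c}{\log(\sqrt{|q|}T)}\right)^{\!2}\int_T^{2T}|L'(\tfrac12+it,f)|^2\,\d{t}\;+\;o(T\log T),
\ee
the boundary contributions from $[T,\tgammaf(n_0)]$ and $[\tgammaf(n_1),2T]$ being absorbed into the error via a standard convexity-type bound on $L(s,f)$.

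The key technical input is then to establish the asymptotic formulas
\be
\int_T^{2T}|L(\tfrac12+it,f)|^2\,\d{t} \;\sim\; c_f\,T\log(\sqrt{|q|}T), \qquad \int_T^{2T}|L'(\tfrac12+it,f)|^2\,\d{t} \;\sim\; \tfrac{c_f}{3}\,T\log^3(\sqrt{|q|}T),
\ee
whose leading coefficients differ by exactly a factor of $3$; here $c_f>0$ depends on $f$ through Rankin-Selberg theory, essentially as a multiple of $L(1,\operatorname{Sym}^2 f)$. To obtain these, I would invoke the approximate functional equation to represent $L(1/2+it,f)$ as a Dirichlet polynomial of length $X\asymp\sqrt{|q|}T$ plus a dual piece, open the square, and use the Montgomery-Vaughan mean value estimate for Dirichlet polynomials to extract the diagonal contribution $T\sum_{n\leq X}|a_f(n)|^2/n\sim c_f T\log X$. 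For $L'$, the diagonal carries an extra weight $(\log n)^2$, and partial summation applied to the Rankin-Selberg asymptotic $\sum_{n\leq X}|a_f(n)|^2/n\sim c_f\log X$ produces the factor $1/3$ in the leading constant. Inserting both asymptotics into the Wirtinger inequality yields $c_f\,T\log(\sqrt{|q|}T)(1+o(1))\leq\tfrac{c^2}{3}\,c_f\,T\log(\sqrt{|q|}T)(1+o(1))$, forcing $c^2\geq 3$ and contradicting $c<\sqrt{3}$.

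The main obstacle is the derivative moment $\int_T^{2T}|L'(\tfrac12+it,f)|^2\,\d{t}$ with a small enough error term. The logarithmic weight in the Dirichlet coefficients enlarges the Montgomery-Vaughan off-diagonal contribution, so a sharper dispersion estimate is needed. More seriously, differentiating the dual part of the approximate functional equation introduces the large factor $\chi'_f/\chi_f\sim -\log(\sqrt{|q|}t)$, and the resulting cross terms between main and dual pieces must be handled by shifting contours in the spirit of Ramachandra, who devised the corresponding argument for $\zeta$. A secondary subtlety is that $f$ may be a Maass form, for which Deligne's bound on $|a_f(p)|$ is unavailable and one must instead rely on Kim-Sarnak-type estimates; fortunately this affects only the error terms, not the leading constants. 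The same technology also yields the mixed cross moment $\int_T^{2T}L(\tfrac12+it,f)\overline{L'(\tfrac12+it,f)}\,\d{t}$ alluded to in the abstract, which is the remaining second moment and would be used in any phase-shift refinement of Hall's method.
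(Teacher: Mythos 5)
There is a genuine gap, and it is quantitative rather than structural: your moment asymptotics are wrong for a degree-$2$ $L$-function, and once they are corrected, your version of the argument (Wirtinger applied directly to $L(\tfrac12+it,f)$, comparing only $\int|L|^2$ with $\int|L'|^2$) no longer produces $\sqrt3$. In the $t$-aspect the analytic conductor of $L(s,f)$ is $\asymp q\,t^2$, so the logarithmic derivative of the functional-equation factor satisfies $\Phi_f'/\Phi_f(\tfrac12+it)\sim -2\log t$, not $-\log(\sqrt{|q|}\,t)$ as you assert; correspondingly the dual sum in the approximate functional equation has length $\asymp t$, it doubles the zeroth moment, and it dominates the derivative moment. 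The correct asymptotics (Theorem \ref{thm:mixed}, proved via the shifted moment of Theorem \ref{thm:shiftedmoment}) are $\int_T^{2T}|L(\tfrac12+it,f)|^2\,\d{t}=2c_fT\log T+O(T)$ and $\int_T^{2T}|L'(\tfrac12+it,f)|^2\,\d{t}=\tfrac83c_fT(\log T)^3+O(T(\log T)^2)$, so your claimed leading constants ($c_f$ and $c_f/3$ relative to $\log(\sqrt{|q|}T)$) are off by factors of $2$ and $8$. Feeding the true values into your Wirtinger inequality gives only $\kappa_T^2/\pi^2\gtrsim \frac{2}{(8/3)(\log T)^2}$, i.e. $\Lambda_f\geq\sqrt3/2<1$, which is vacuous; the apparent factor $3$ in your ratio is an artifact of treating $L(s,f)$ as if it were a degree-$1$ $L$-function of conductor $\sqrt{|q|}\,t$.

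The missing idea is exactly the device you relegate to a closing remark as an optional ``phase-shift refinement.'' The paper applies Wirtinger's inequality to $g(t)=e^{i\rho t\log T}L(\tfrac12+it,f)$, so that $|g'|^2$ brings in all three second moments, including the mixed one $\int_T^{2T}L'(\tfrac12+it,f)\overline{L(\tfrac12+it,f)}\,\d{t}=-2c_fT(\log T)^2+O(T\log T)$, whose negativity reduces $\int|g'|^2$: one obtains $\kappa_T^2/\pi^2\geq\frac{3}{3\rho^2-6\rho+4}(\log T)^{-2}(1+o(1))$, and the optimal choice $\rho=1$ yields $\kappa_T\geq\sqrt3\,\pi/\log T$, hence $\Lambda_f\geq\sqrt3$. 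So the mixed moment is not a refinement but the essential ingredient. Your peripheral steps are broadly in line with the paper (endpoint intervals absorbed via subconvexity; the reduction of everything to second-moment asymptotics), but the paper derives those moments from a Ramachandra-type shifted second moment plus Cauchy's integral formula in the shifts, rather than by differentiating the approximate functional equation and applying Montgomery--Vaughan directly, which is where your heuristic for the derivative moment went astray.
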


\begin{corollary}\label{thm:RHlargegaps}Let $L(s,f)$ be a primitive $L$-function on $\GL(2)$ over $\Q$. Then, assuming the Generalized Riemann Hypothesis for $L(s,f)$, we have $\Lambda_f=\lambda_f \geq\sqrt{3}=1.732\ldots$.
\end{corollary}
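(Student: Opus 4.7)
\emph{Proof plan for Corollary~\ref{thm:RHlargegaps}.} The corollary is essentially immediate from Theorem~\ref{thm:gaps}. Under the Generalized Riemann Hypothesis for $L(s,f)$, every nontrivial zero lies on the critical line $\Re(s) = 1/2$, so the sequence $\{\gammaf(n)\}$ of ordinates of nontrivial zeros coincides with the sequence $\{\tgammaf(n)\}$ of ordinates of critical-line zeros. Consequently the two limsups defining $\Lambda_f$ and $\lambda_f$ in \eqref{eq:mulambdadefs} are taken over identical sequences, so $\Lambda_f = \lambda_f$, and the unconditional bound $\Lambda_f \geq \sqrt 3$ provided by Theorem~\ref{thm:gaps} transfers at once to $\lambda_f$. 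In other words, GRH is used only to remove the distinction between off-line and on-line zeros built into the definition of $\Lambda_f$; no further analytic work is required here.

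For context, the plan for Theorem~\ref{thm:gaps} itself is a Hall-style Wirtinger argument. One argues by contradiction: suppose $\Lambda_f < \sqrt 3$, so there is $\delta > 0$ such that for all sufficiently large $T$ every gap $\tgammaf(n+1) - \tgammaf(n)$ with $\tgammaf(n) \in [T/2, T]$ satisfies $\tgammaf(n+1) - \tgammaf(n) \leq (\sqrt 3 - \delta)\,\pi/\log(\sqrt{|q|}T)$. Since $L(1/2+it,f)$ vanishes at both endpoints of each such sub-interval, Wirtinger's inequality yields
\begin{equation*}
\int_{\tgammaf(n)}^{\tgammaf(n+1)} \bigl|L(\tfrac12+it,f)\bigr|^2\,dt \;\leq\; \frac{(\tgammaf(n+1)-\tgammaf(n))^2}{\pi^2}\int_{\tgammaf(n)}^{\tgammaf(n+1)} \bigl|L'(\tfrac12+it,f)\bigr|^2\,dt,
\end{equation*}
and summing over the relevant $n$ produces
\begin{equation*}
\int_{T/2}^T \bigl|L(\tfrac12+it,f)\bigr|^2\,dt \;\leq\; \frac{(\sqrt 3 - \delta)^2}{\log^2(\sqrt{|q|}T)}\int_{T/2}^T \bigl|L'(\tfrac12+it,f)\bigr|^2\,dt.
\end{equation*}
Inserting the classical second moment $\int_0^T |L(1/2+it,f)|^2\,dt \sim c_f\,T\log T$ and the paper's derivative second moment $\int_0^T |L'(1/2+it,f)|^2\,dt \sim \tfrac{c_f}{3}\,T(\log T)^3$ with matching leading constants $c_f$ forces $(\sqrt 3 - \delta)^2 \geq 3$, a contradiction. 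The clean value $\sqrt 3$ (rather than the $\sqrt{3/2}$ that the analogous calculation produces for $\zeta$) arises because the mean spacing for $\GL(2)$ zeros is $\pi/\log(\sqrt{|q|}T)$ instead of $2\pi/\log T$.

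The genuine difficulty, and thus the main obstacle in the broader argument behind the corollary, is not the Wirtinger step but the derivative second-moment asymptotic with the correct leading constant. That formula, together with the mixed second moments of derivatives of $L(1/2+it,f)$ advertised in the abstract, is the paper's core technical input and must be established via Ramachandra's method in combination with the Montgomery--Vaughan mean-value estimates.
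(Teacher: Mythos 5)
Your proof of the corollary itself is correct and is exactly the paper's (implicit) argument: under GRH all nontrivial zeros lie on $\Re(s)=1/2$, so the sequences $\{\gammaf(n)\}$ and $\{\tgammaf(n)\}$ coincide, $\Lambda_f=\lambda_f$ (as the paper notes immediately after \eqref{eq:mulambdadefs}), and Theorem~\ref{thm:gaps} then gives the bound; no further work is needed. One caution about your ``context'' sketch of Theorem~\ref{thm:gaps}, since it misrepresents where $\sqrt3$ comes from: with the paper's actual moments, $\int_T^{2T}|L(\tfrac12+it,f)|^2\,dt\sim 2c_fT\log T$ and $\int_T^{2T}|L'(\tfrac12+it,f)|^2\,dt\sim\tfrac83c_fT(\log T)^3$, the plain Wirtinger argument applied to $L(\tfrac12+it,f)$ alone gives only $\kappa_T\geq(\sqrt3/2+o(1))\pi/\log T$, which is weaker than the trivial bound $\Lambda_f\geq1$; the paper instead applies Wirtinger to the rotated function $g(t)=e^{i\rho t\log T}L(\tfrac12+it,f)$, which brings in the mixed moment $\int_T^{2T}L'\overline{L}\,dt\sim-2c_fT(\log T)^2$ and, after optimizing $\rho=1$, yields $\sqrt3$. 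Your sketch closes only because the derivative-moment constant you quote ($\tfrac{c_f}{3}T(\log T)^3$) is not the one this normalization produces, so if you ever write out Theorem~\ref{thm:gaps} in full you must include the $e^{i\rho t\log T}$ twist and the $\mu=1,\nu=0$ case of Theorem~\ref{thm:mixed}.
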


We also consider the question of small gaps between nontrivial zeros of $L(s,f)$, however our arguments may be applied to any primitive $L$-function, $L(s)$, in the Selberg class,\footnote{We define the Selberg class in Section \ref{properties}.} which we denote by $\selberg$. It is conjectured that all `standard' automorphic $L$-functions as described by Langlands are members of the Selberg class, but this is far from established. It is known, however, that  the primitive automorphic cuspidal $\GL(2)$ $L$-functions we consider are members of $\selberg$, and moreover membership in $\selberg$ has been established for primitive holomorphic cusp forms. This has not yet been established, however, in the real-analytic case of Maass cusp forms. 

Fix any $L\in\selberg$. As above, consider the sequence $0 \le\gammaL(1)\leq \gammaL(2) \leq \cdots\leq \gammaL(n) \leq \cdots$
of consecutive ordinates of the nontrivial zeros of $L(s)$, and define
\begin{equation}\label{eq:numLzeros}
N(T,L)\ := \ \sum_{0<\gammaL \leq T} 1 
\end{equation}
and
\be\label{eq:muLdef}
\mu_L\ := \  \liminf_{n\to \infty}\,\frac{\gammaL(n\!+\!1)-\gammaL(n)}{\pi/\log \gammaL(n)}.
\ee

By definition, we have $\mu_L\le1$, but we expect that $\mu_L=0$. It is conjectured that all the nontrivial zeros of $L(1/2+it)\in \selberg$ are simple, except for a possible multiple zero at the central point $s=1/2$. In the case that $f$ is a primitive holomorphic cusp form, Milinovich and Ng~\cite{micah-nathan} have shown, under the Generalized Riemann Hypothesis for $L(s,f)$, that the number of simple zeros of $L(s,f)$ satisfying $0<\gammaf\leq T$ is greater than a positive constant times $T(\log T)^{-\varepsilon}$ for any $\varepsilon>0$ and $T$ sufficiently large.

Every $L \in\selberg$ satisfies
  \be\log L(s)\ = \ \sum_{n=1}^\infty \frac{b_L(n)}{n^{s}},\ee
  where $b_L(n)=0$ unless $n=p^\ell$ for some $\ell\geq 1$, and $b_L(n)\ll n^\theta$
  for some $\theta<1/2$. In addition to the Generalized Riemann Hypothesis, we make the following assumption in the proof of small gaps between nontrivial zeros of $L\in\selberg$.
\begin{thmx}\label{eq:selbergconj}
Let $\upsilon_L(n)\ := \ b_L(n)\log n.$ We have
\be
\sum_{n\leq x}\upsilon_L(n)\overline{\upsilon_L(n)}\ = \ (1+o(1))x\log x\ee
as $x\to \infty$.
\end{thmx}
Hypothesis~\ref{eq:selbergconj}, proposed by Murty and Perelli in \cite{murty-perelli}, is a mild assumption concerning the correlation of the coefficients of $L$-functions at primes and prime powers. Hypothesis~\ref{eq:selbergconj} is motivated in \cite{murty-perelli} by the Selberg Orthogonality Conjectures, which are known to hold for $L$-functions attached to irreducible cuspidal automorphic representations on $\GL(m)$ over $\Q$ if $m\leq 4$. (If $m\le4$, see \cite{AS,LWY,LY1,LY2}.) 

In the case that $m_L=1$, it has recently been shown in \cite{CCLM} that $\mu_L \leq 0.606894$. We generalize these arguments for any $L\in \selberg$ to prove the following upper bounds on $\mu_L$. 

\begin{theorem}\label{thm:smallgaps}Let $L\in \selberg$ be primitive of degree $m_L$. Assume the Generalized Riemann Hypothesis and Hypothesis~\ref{eq:selbergconj}. Then there is a computable nontrivial upper bound on $\mu_L$ depending on $m_L$. In particular, we record the following upper bounds for $\mu_L$:
  \be\begin{tabular}{c c c}
    $m_L$ & & upper bound for $\mu_L$ \\
        \hline
    1 && 0.606894 \\
    2 && 0.822897 \\
    3 && 0.905604 \\
    4 && 0.942914 \\
    5 && 0.962190\\
    \vdots && \vdots 
  \end{tabular}\ee
where the nontrivial upper bounds for $\mu_L$ approach 1 as $m_L$ increases.
\end{theorem}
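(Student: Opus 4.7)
My plan is to extend the Montgomery-type small-gap argument carried out in \cite{CCLM} for the Riemann zeta-function (the case $m_L=1$) to any primitive $L\in\selberg$, substituting the pair correlation formula of Murty and Perelli \cite{murty-perelli} for Montgomery's classical input. The main analytic fact, which holds under GRH and Hypothesis~\ref{eq:selbergconj}, is an asymptotic for a suitably normalized pair correlation function $F_L(\alpha,T)$ of the nontrivial zeros of $L$ valid uniformly in the range $|\alpha|\leq 1/m_L$ and of the same schematic shape as Montgomery's original formula. I would begin by recording the Murty--Perelli asymptotic in the form most convenient for the subsequent Fourier argument.

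Following Montgomery, for an even nonnegative $r\in L^1(\R)$ whose Fourier transform $\hat r$ is supported in $[-1/m_L,1/m_L]$, I would consider the weighted pair sum
\[
M_r(T)\ := \ \sum_{0<\gammaL,\gammaL'\leq T} r\!\left(\frac{(\gammaL-\gammaL')\log T}{\pi}\right)\frac{4}{4+(\gammaL-\gammaL')^2}.
\]
Plancherel combined with the pair correlation asymptotic expresses $M_r(T)$ in terms of $\int_\R \hat r(\alpha)F_L(\alpha,T)\,d\alpha$. On the other hand, if $\mu_L>\mu$ then for all sufficiently large $n$ the gap $\gammaL(n+1)-\gammaL(n)$ exceeds $\mu\pi/\log\gammaL(n)$; assuming $r$ is nonincreasing on $[0,\infty)$, this gives at each $n$ the bound $\sum_{k\geq 1}r((\gammaL(n+k)-\gammaL(n))\log T/\pi)\leq \sum_{k\geq 1}r(k\mu)$, so the off-diagonal part of $M_r(T)$ is controlled by $(1+o(1))\cdot 2N(T,L)\sum_{k\geq 1}r(k\mu)$, while the diagonal contributes $(1+o(1))r(0)N(T,L)$. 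Combining the two expressions for $M_r(T)$, the existence of an admissible $r$ satisfying a strict inequality of the form
\[
\int_\R \hat r(\alpha)F_L(\alpha,T)\,d\alpha\ > \ r(0)+2\sum_{k\geq 1}r(k\mu)
\]
would contradict $\mu_L>\mu$, yielding $\mu_L\leq\mu$.

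To produce the numerical values in the theorem I would deploy the same class of extremal test functions used in \cite{CCLM}, namely $r=|g|^2$ with $g$ a Paley--Wiener function of exponential type $\pi/m_L$ (Paley--Wiener dual to the bandlimit on $\hat r$), substitute the Selberg-class $F_L$ into the corresponding variational problem, and numerically solve it for each $m_L\in\{1,2,3,4,5\}$. A built-in consistency check is that at $m_L=1$ this must recover the bound $0.606894$ established for the Riemann zeta-function in \cite{CCLM}.

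The binding constraint throughout is the support condition $|\alpha|\leq 1/m_L$: as $m_L$ grows, the admissible band for $\hat r$ shrinks, forcing $r$ to spread out and eroding the positive gap in the extremal inequality. This is precisely why the bounds approach $1$ as $m_L\to\infty$; the principal technical delicacy, and what I expect to be the hardest part, is tracking the error terms from the pair correlation asymptotic against the shrinking support so that the variational gain persists uniformly in $m_L$ through the values recorded above.
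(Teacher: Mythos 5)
Your overall frame (Murty--Perelli pair correlation plus a Fourier test-function argument in the spirit of Montgomery and \cite{CCLM}) is the right family of ideas, but the specific mechanism you propose cannot yield the bounds recorded in the theorem, and your own consistency check would fail. You restrict to $r=|g|^2$ with $\hat r$ supported in $[-1/m_L,1/m_L]$, i.e.\ inside the range where the asymptotic \eqref{eq:falphapc} for $F_L(\alpha)$ is actually known. With that restriction the classical ``assume all gaps exceed $\mu$ and contradict'' computation only gives Montgomery-type numerics (for $m_L=1$ it does not reach $0.606894$): the value $0.606894$ in \cite{CCLM}, and a fortiori the whole table, relies on test functions whose Fourier transform is supported \emph{beyond} the proven range. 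Concretely, the paper takes $r(u)=h(u/\lambda)$ with $h$ the Selberg minorant of $\chi_{[-1,1]}$, so $\hat r$ is supported in $[-1/\lambda,1/\lambda]$ with $1/\lambda>\kappa_{m_L}>1\geq 1/m_L$ (for $m_L=1$ one needs $1/\lambda\approx 1.65$; for $m_L=2$ one needs $1/\lambda\approx 1.22$ while the proven range is only $|\alpha|\leq 1/2$). The contribution $\int_{1/m_L}^{1/\lambda}\hat h(\lambda\alpha)F_L(\alpha)\,\d\alpha$ from the unproven range is then controlled by three ingredients absent from your sketch: the positivity of $F_L$, the averaged lower bound of Lemma~\ref{lem:falphaest}, namely $\int_{1/m_L}^{\xi}(\xi-\alpha)F_L(\alpha)\,\d\alpha\geq \tfrac{\xi^2}{2}-\tfrac{\xi}{2}\bigl(1+m_L^{-2}\bigr)+\tfrac{1}{3m_L^{3}}+o(1)$ (proved with the Fejér pair $r_\xi$, i.e.\ the \cite{GGOS} device as sharpened in \cite{CCLM}), and a double integration by parts using $\hat h(1)=\hat h'(1)=0$ and $\hat h''(\lambda\alpha)=-2\pi\sin(2\pi\lambda\alpha)$ to convert the integral into one against $I(\alpha)=\int_{1/m_L}^{\alpha}(\alpha-u)F_L(u)\,\d u$, where that lower bound and the sign of $\hat h''$ can be exploited beyond $\kappa_{m_L}$ as in \eqref{eq:kappadef}. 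This is the step that makes the numbers in the table attainable; without it your admissible band shrinks like $1/m_L$ and the variational gain degrades far faster than the recorded bounds.

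Two further points of contrast with the paper's route. First, the paper does not argue by contradiction from a hypothesized gap lower bound: since $r$ is a \emph{minorant} of the indicator of $[-\lambda,\lambda]$, the convolution formula \eqref{eq:falphaconvolution} directly lower-bounds the count of pairs with $0<\gammaL-\gammaL'\leq 2\pi\lambda/(m_L\log T)$, so no monotonicity of $r$ on $[0,\infty)$ is needed --- an assumption in your sketch that in fact fails for Fejér-type functions $|g|^2$ --- and zero multiplicities are dispatched by noting that if $\sum_{0<\gammaL\leq T}\mathfrak m_\rho$ is not $\sim N(T,L)$ the theorem is trivially true. Second, your normalization $r\bigl((\gammaL-\gammaL')\log T/\pi\bigr)$ is inconsistent with the definition \eqref{eq:falphadef} of $F_L$: the differences must be scaled by $m_L\log T/(2\pi)$ for Plancherel to pair $\hat r(\alpha)$ with $F_L(\alpha)$ on the correct interval, so your support condition is off by a factor of $m_L/2$ even before the main issue above.
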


\begin{corollary}
Let $f$ be a primitive holomorphic cusp form on $\GL(2)$ over $\Q$ with level $q$ and $L(s,f)$ the associated $L$-function. Then $L(s,f) \in \selberg$, so assuming the Generalized Riemann Hypothesis and Hypothesis~\ref{eq:selbergconj}, we have
\begin{equation}
\liminf_{n\to \infty}
  \frac{\gammaf(n\!+\!1)-\gammaf(n)}{\pi / \log\left(\sqrt{|q|}\gammaf(n)\right)} \ <\  0.822897.
\end{equation}
\end{corollary}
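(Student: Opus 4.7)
The plan is a direct reduction to Theorem~\ref{thm:smallgaps} with $m_L=2$, after (i) confirming $L(s,f)\in\selberg$ and (ii) reconciling the two normalizations of the average gap used in the statement of the corollary versus in the definition \eqref{eq:muLdef} of $\mu_L$.

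For step (i), I would cite the classical verification of the Selberg axioms for the $L$-function of a primitive holomorphic newform: the Dirichlet series \eqref{Ldefintion} converges absolutely for $\Re(s)>1$; the completed $L$-function extends to an entire function of finite order and satisfies a functional equation of the required shape (contained in Hecke's treatment, and codified for newforms by Atkin--Lehner); the Ramanujan bound $|a_f(n)|\ll n^{\varepsilon}$ (Deligne) supplies the Selberg polynomial growth condition; and the Euler product with Satake parameters produces $\log L(s,f) = \sum_n b_L(n) n^{-s}$ with $b_L(n)=0$ unless $n$ is a prime power and $b_L(n)\ll n^\theta$ for some $\theta<1/2$. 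Primitivity in $\selberg$ follows from $f$ being a newform, so in particular $m_L = 2$. The paper already flags this membership in the text preceding Hypothesis~\ref{eq:selbergconj}, so the citation can be brief.

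For step (ii), Theorem~\ref{thm:smallgaps} specialized to $m_L=2$ gives, under the Generalized Riemann Hypothesis and Hypothesis~\ref{eq:selbergconj},
\[
\mu_{L(s,f)} \ = \ \liminf_{n\to\infty}\,\frac{\gammaf(n\!+\!1)-\gammaf(n)}{\pi/\log\gammaf(n)} \ < \ 0.822897.
\]
To recast this in the corollary's conductor-weighted normalization, I use $\log(\sqrt{|q|}\gammaf(n)) = \log\gammaf(n) + \tfrac{1}{2}\log|q|$ and $\gammaf(n)\to\infty$ to see that $\log(\sqrt{|q|}\gammaf(n))/\log\gammaf(n)\to 1$. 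Multiplying numerator and denominator of the normalized gap by this ratio therefore leaves the $\liminf$ unchanged, and the bound transfers to the form displayed in the statement.

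The main obstacle is negligible: all the analytic content sits in Theorem~\ref{thm:smallgaps}, and the only external ingredient is the membership $L(s,f)\in\selberg$ for holomorphic newforms. The one point requiring genuine care is the Selberg coefficient condition $b_L(n)\ll n^\theta$ with $\theta<1/2$ at prime powers, which is immediate from Deligne's bound in the holomorphic case (this is exactly why the corollary is restricted to holomorphic $f$ and cannot be stated for Maass forms). Granted this, the corollary is a one-line specialization of Theorem~\ref{thm:smallgaps} together with the routine normalization check above.
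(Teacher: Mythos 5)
Your proposal is correct and follows essentially the same route the paper intends: membership of $L(s,f)$ in $\selberg$ for primitive holomorphic cusp forms (which the paper itself records), specialization of Theorem~\ref{thm:smallgaps} to degree $m_L=2$, and the observation that the conductor-weighted normalization differs from $\pi/\log\gammaf(n)$ only by the factor $\log\bigl(\sqrt{|q|}\gammaf(n)\bigr)/\log\gammaf(n)\to1$, which leaves the $\liminf$ unchanged. The paper treats the corollary as an immediate consequence and gives no separate argument, so your write-up supplies exactly the routine details it omits.
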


The majority of this article is focused on deriving the lower bound given in Theorem \ref{thm:gaps}, which is accomplished using a method of
Hall~\cite{Hall} and some ideas of Bredberg~\cite{Bredberg}. We closely follow
the arguments in~\cite{caroline}, where the problem is considered for large gaps
between consecutive zeros of a Dedekind zeta-function of a quadratic number
field. Note that the $L$-function considered in ~\cite{caroline} is of degree 2, however it
is not primitive because it factors as the product of the Riemann zeta-function
and a Dirichlet $L$-function. Due to the primitivity of the $L$-functions in
the present work, we must consider a Rankin-Selberg type convolution, which we define by
\begin{equation}\label{convolution}
L(s,f \!\times\! \overline{f}) \ := \  \sum_{n=1}^{\infty}\frac{|a_f(n)|^2}{n^s},
\end{equation}
for $\Re(s)>1$. It can be shown that this function extends meromorphically to $\C$ and has a simple pole at $s=1$. For any choice of $f$, we let $c_f$ denote the residue of the simple pole of $L(s,f \!\times\! \overline{f})$ at $s=1$.

Following \cite{caroline}, we require asymptotic estimates of the mixed second
moments of $L(1/2+\!it,f)$ and $L^{\prime}(1/2+\!it,f)$ with a uniform error.
We obtain these by way of the following theorem.

\begin{theorem}\label{thm:mixed} Let $L(s,f)$ be a primitive $L$-function on $\GL(2)$ over $\Q$. Let $s=1/2+it$, $T$ large, and let $\mu, \nu$ denote non-negative integers. We have
\begin{align}
  \int_T^{2T}L^{(\mu)}\left(\tfrac{1}{2}\!+\!it,f\right)
  L^{(\nu)}\left(\tfrac{1}{2}\!-\!it,\overline{f}\right)\d t
  \ = \  \frac{(-1)^{\mu+\nu}2^{\mu+\nu+1}}{\mu\!+\!\nu\!+\!1}&c_fT\left(\log T\right)^{\mu+\nu+1}
  \ + \
  O\left(\mu!\nu!T(\log T)^{\mu+\nu}\right)
\end{align}
as $T\to \infty$, where $c_f$ denotes the residue of the simple pole of $L(s,f,\times \overline{f})$ at $s=1$, and the error term is uniform in $\mu$ and $\nu$.
\end{theorem}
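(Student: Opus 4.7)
The plan follows Ramachandra's method combined with Montgomery--Vaughan's mean value theorem: first prove a \emph{shifted}, un-differentiated second moment of $L(s,f)$, and then extract the derivatives by Cauchy's integral formula on small circles of radius $\asymp 1/\log T$.

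The first step aims to show that, for $\alpha,\beta\in\C$ with $|\alpha|,|\beta|\le c/\log T$,
$$M(\alpha,\beta)\ := \ \int_T^{2T} L\!\left(\tfrac12+\alpha+it,f\right)\,L\!\left(\tfrac12+\beta-it,\overline f\right)\d t\ = \ c_f T\,\frac{1-T^{-2(\alpha+\beta)}}{\alpha+\beta}\ + \ O(T),$$
uniformly. To prove this, I would replace each $L$-factor by a Dirichlet polynomial via an approximate functional equation whose length reflects the analytic conductor $\asymp \sqrt{|q|}\,T$ of this degree-$2$ $L$-function (this is what ultimately produces the shift exponent $-2(\alpha+\beta)$ rather than $-(\alpha+\beta)$) and invoke the Montgomery--Vaughan mean value estimate $\int_T^{2T}|\sum_{n\le N}a_n n^{-it}|^2\d t = \sum_{n\le N}|a_n|^2(T+O(n))$. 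The diagonal contribution, after partial summation against the Rankin--Selberg asymptotic $\sum_{n\le x}|a_f(n)|^2=c_f x+o(x)$ supplied by the pole at $s=1$ of \eqref{convolution}, produces the claimed main term. The off-diagonal contributions, together with the cross terms pairing the primary Dirichlet polynomial against the AFE dual sum, are absorbed into the $O(T)$ error via Montgomery--Vaughan and Cauchy--Schwarz; no power savings are needed.

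With this shifted moment in hand, I would extract the derivatives by Cauchy's formula: for any analytic $F$ and $r>0$, $F^{(k)}(s)=\frac{k!}{2\pi i}\oint_{|z|=r}F(s+z)z^{-k-1}\d z$. Applying this to each factor with $r=c/\log T$ and interchanging the two contour integrals with $\int_T^{2T}$ gives
$$\int_T^{2T}L^{(\mu)}\!\left(\tfrac12+it,f\right)L^{(\nu)}\!\left(\tfrac12-it,\overline f\right)\d t\ = \ \frac{\mu!\,\nu!}{(2\pi i)^2}\oint_{|\alpha|=r}\oint_{|\beta|=r}\frac{M(\alpha,\beta)}{\alpha^{\mu+1}\beta^{\nu+1}}\d\alpha\d\beta.$$
Substituting the previous step and expanding
$$\frac{1-T^{-2(\alpha+\beta)}}{\alpha+\beta}\ = \ 2\log T\sum_{j\ge0}\frac{(-2\log T)^j(\alpha+\beta)^j}{(j+1)!},$$
the double residue at $\alpha=\beta=0$ isolates $j=\mu+\nu$ and the coefficient $\binom{\mu+\nu}{\mu}$ of $\alpha^\mu\beta^\nu$. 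Since $\mu!\,\nu!\binom{\mu+\nu}{\mu}/(\mu+\nu+1)!=1/(\mu+\nu+1)$, the main term collapses to $\frac{(-1)^{\mu+\nu}2^{\mu+\nu+1}}{\mu+\nu+1}c_fT(\log T)^{\mu+\nu+1}$. Trivial contour-length estimates applied to the $O(T)$ remainder of $M$ yield $O(\mu!\,\nu!\,T r^{-\mu-\nu})=O(\mu!\,\nu!\,T(\log T)^{\mu+\nu})$, matching the theorem's error term and its uniformity in $\mu,\nu$.

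The main technical obstacle is the first step: controlling $M(\alpha,\beta)$ with a clean $O(T)$ remainder that is \emph{uniform} in $(\alpha,\beta)$ over the disk of radius $c/\log T$. The AFE cross terms are highly oscillatory, the gamma-factor ratio supplying stationary-phase twists essentially of order $T^{\pm it}$, so they do not feel the Rankin--Selberg pole; but making this quantitative while preserving uniformity in the shifts---and cleanly absorbing the gamma-factor contributions from the functional equation---is where the genuine work lies.
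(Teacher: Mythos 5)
Your proposal is correct and follows essentially the same route as the paper: a shifted second moment with main term $c_fT\bigl(1-T^{-2(\alpha+\beta)}\bigr)/(\alpha+\beta)+O(T)$ uniform for $|\alpha|,|\beta|\ll1/\log T$, proved by Ramachandra's approximate functional equation together with Montgomery--Vaughan and the Rankin--Selberg pole, followed by extraction of the derivatives via Cauchy's integral formula on circles of radius $\asymp1/\log T$, with the trivial contour bound on the $O(T)$ remainder giving the uniform $O(\mu!\,\nu!\,T(\log T)^{\mu+\nu})$ error. Your double-residue computation of the main term agrees with the paper's differentiation of $F(\alpha+\beta;T)$, so no further comment is needed.
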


If $f$ is a cusp form of even weight (at least 12) with respect to the full modular group, the cases $\mu= \nu$ are known. The case $\mu= \nu =0$ in this setting was proved by Good~\cite{Good}, and for any nonnegative integer $m$, the general case
$\mu = \nu =m$ was recently given by Yashiro~\cite{yashiro}. In addition
to the cases $\mu=\nu=0$ and $\mu=\nu=1$, we require the mixed case
$\mu=1, \nu=0$ in the proof of Theorem \ref{thm:gaps}. In this article, we first prove a shifted moment result and then obtain the more general formula given in Theorem \ref{thm:mixed} via differentiation (with respect to the shifts) and Cauchy's integral formula. We deduce the required shifted moment result, given below and proved in Section~\ref{sec:shiftproof} , using a method of Ramamchandra~\cite{ramachandra}. 

\begin{theorem}\label{thm:shiftedmoment}Let $L(s,f)$ be a primitive $L$-function on $\GL(2)$ over $\Q$. Let $s=1/2+it$, $T$ large, and $\alpha, \beta\in\C$ such that
  $|\alpha|,|\beta| \ll 1/\log T$. Then, we have
\begin{align}
  \int_T^{2T}{L\left(s\!+\!\alpha,f\right)L\left(1\!-\!s\!+\!\beta,\overline{f}\right)\d t}
  \ = \ \int_T^{2T}&
  \left\{L(1\!+\!\alpha\!+\!\beta,f\!\times\!\overline{f})
    +\left(\frac{t}{2\pi}\right)^{-2(\alpha+\beta)}
    L(1\!-\!\alpha\!-\!\beta,f\!\times\! \overline{f})\right\}\d t
    +O(T)
\end{align}
as $T \to \infty$.
\end{theorem}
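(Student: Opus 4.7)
The plan is to prove Theorem~\ref{thm:shiftedmoment} by a standard variant of Ramachandra's method: express each shifted $L$-value via a smoothed approximate functional equation, multiply the two expansions out into four double Dirichlet sums, identify two main terms from the diagonals, and bound the remainders via the Montgomery-Vaughan mean-value inequality.

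\textbf{Approximate functional equation and main terms.} Let $X_f(s)$ denote the chi-factor in $L(s,f) = X_f(s) L(1-s,\overline{f})$. I would first establish a smoothed approximate functional equation
\[
  L(s+\alpha, f) \ = \ \sum_n \frac{a_f(n)}{n^{s+\alpha}} U\!\bigl(n/Y\bigr) + X_f(s+\alpha) \sum_n \frac{\overline{a_f(n)}}{n^{1-s-\alpha}} \widetilde{U}\!\bigl(n/Y'\bigr) + O(t^{-A})
\]
for smooth rapidly-decaying cutoffs $U, \widetilde{U}$ and balanced lengths $YY' \asymp t$, with an analogous expansion for $L(1-s+\beta,\overline{f})$. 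Multiplying the two expansions produces four double sums. The straight-times-straight sum is
\[
  \sum_{m,n} \frac{a_f(n) \overline{a_f(m)}}{n^{1/2+\alpha} m^{1/2+\beta}} U(n/Y) U(m/Y) (m/n)^{it},
\]
whose diagonal $m=n$, integrated in $t \in [T,2T]$, yields the first main term $T \cdot L(1+\alpha+\beta, f\times\overline{f}) + O(T)$. The dual-times-dual sum carries a factor $X_f(s+\alpha) X_{\overline f}(1-s+\beta)$, and Stirling's formula gives
\[
  X_f(s+\alpha) X_{\overline{f}}(1-s+\beta) \ = \ \left(\frac{t}{2\pi}\right)^{\!-2(\alpha+\beta)}\!\left(1 + O(1/t)\right),
\]
so its diagonal integrates to the second main term $\int_T^{2T} (t/2\pi)^{-2(\alpha+\beta)} L(1-\alpha-\beta, f\times\overline{f}) \, dt + O(T)$.

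\textbf{Controlling the error.} The off-diagonal contributions from the straight-straight and dual-dual pieces fall out of the Montgomery-Vaughan mean-value inequality
\[
  \int_T^{2T} \Bigl|\sum_{n\leq N} a_n n^{-it}\Bigr|^2 dt \ = \ \sum_{n\leq N} |a_n|^2 \bigl(T + O(n)\bigr),
\]
combined with the Rankin-Selberg bound $\sum_{n\leq N} |a_f(n)|^2 \ll N$, keeping them at $O(T)$. The two remaining cross terms (straight-times-dual and dual-times-straight) produce oscillatory sums of the shape $X_{\overline{f}}(\cdot)\sum_{m,n} a_f(n) a_f(m) (nm)^{-1/2-it}$; since $\tfrac{d}{dt}\arg X_f(1/2+\alpha+it) = -2\log(t/2\pi) + O(1/t)$ stays bounded away from $\log(nm)$ for every $nm \geq 1$ and $t \asymp T$, a single integration by parts yields per-term cancellation of size $\asymp 1/\log T$, and summing over $n,m \ll T$ using $\sum_{n\leq N} |a_f(n)| n^{-1/2} \ll N^{1/2}(\log N)^{1/2}$ bounds this contribution by $O(T)$ as well.

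\textbf{Main obstacle.} The chief technical difficulty is uniformity in the shifts $\alpha,\beta \in \C$ with $|\alpha|,|\beta| \ll 1/\log T$: Theorem~\ref{thm:mixed} is extracted from the shifted moment by differentiating $\mu{+}\nu$ times in $\alpha,\beta$ and invoking Cauchy's integral formula, so any dependence of the implicit constants on $\alpha,\beta$ would pollute the derivative estimates with spurious factorials. Securing the sharp $O(T)$ error--rather than the $O(T\log T)$ that would arise from careless handling of the cross terms or of the Stirling asymptotic--requires tight bookkeeping of the oscillation on the cross pieces, of the tails of the weights $U, \widetilde{U}$, and of the Gamma-quotient expansion of $X_f(s+\alpha) X_{\overline{f}}(1-s+\beta)$, all with constants absolute in $\alpha,\beta$ throughout a neighborhood of the critical line.
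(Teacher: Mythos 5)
Your overall strategy (approximate functional equation, multiply out, diagonal main terms, Montgomery--Vaughan for off-diagonals) is the same skeleton as the paper's, but two of your steps have genuine gaps. First, the lengths: for a degree-$2$ form the analytic conductor at height $t$ is $\asymp q\,t^2$, so a (smoothed) approximate functional equation with remainder $O(t^{-A})$ forces $YY'\asymp t^{2}$, not $YY'\asymp t$; the paper's Lemma~\ref{lem:ram} accordingly takes both pieces of length about $X=T/(2\pi)$. This is not a cosmetic slip, because your cross-term bound rests on the combined phase derivative $\pm\bigl(2\log(t/2\pi)-\log(nm)\bigr)$ being bounded away from zero for all pairs in range, which holds only when $nm=o(t^{2})$. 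With the correct lengths, $nm$ runs up to $\asymp t^{2}$ and the phase is stationary near $nm\approx(t/2\pi)^{2}$, so one integration by parts gives no saving on precisely the critical range; either your AFE is invalid (too short) or your cross-term estimate fails. The paper never runs a derivative test on a naked cross sum: in Ramachandra's exact identity the dangerous pieces come packaged with contour integrals $\int\Phi_{\overline f}(1-s+\beta+w)\Gamma(w)X^{w}\,\d w\ll X^{1/4}t^{-1/2}$ (Section~\ref{sec:control}), and the resulting $J_i$ are absorbed into $O(T)$ via Lemmas~\ref{lem:GeneralMV} and~\ref{lem:parts}.

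Second, your main-term bookkeeping discards terms of size $T\log T$. The diagonal of the straight-times-straight piece is not $T\,L(1+\alpha+\beta,f\times\overline f)+O(T)$: the Rankin--Selberg pole contributes in addition $\asymp c_f\,T\,Y^{-(\alpha+\beta)}/(\alpha+\beta)$, which for $|\alpha+\beta|\ll1/\log T$ is as large as $T\log T$ (note $L(1\pm(\alpha+\beta),f\times\overline f)$ is itself $\asymp c_f/|\alpha+\beta|$, so companions of that size cannot be dropped into $O(T)$). Likewise the dual-times-dual diagonal produces an extra $c_f X^{\alpha+\beta}/(\alpha+\beta)$ multiplying $\int_T^{2T}(t/2\pi)^{-2(\alpha+\beta)}\,\d t$. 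These two secondary terms cancel to within $O(T)$ only because the lengths are matched to the conductor, which is exactly why the paper's Lemmas~\ref{lem:weightsum} and~\ref{lem:truncatedsum} carry them explicitly (the $c_f\Gamma(-\alpha-\beta)(X/2)^{-\alpha-\beta}$ and $X^{\alpha+\beta}c_f/(\alpha+\beta)$ terms) before combining. Without tracking this cancellation the stated $O(T)$ error, uniformly as $\alpha+\beta\to0$, is out of reach --- and that uniformity near $\alpha=\beta=0$ is precisely what the Cauchy-formula differentiation behind Theorem~\ref{thm:mixed} consumes.
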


The main term of Theorem~\ref{thm:shiftedmoment} verifies
a conjecture arising from the recipe of Conrey, Farmer, Keating, Rubinstein, and Snaith (see~\cite{CFKRS}), which also predicts the additional lower order 
terms. Using other methods, the expected lower order terms in  Theorem \ref{thm:shiftedmoment} can be deduced in the case that $f$ is a holomorphic primitive cusp form of even weight for the full modular group. Farmer \cite{farmer} has proved the asymptotic behavior of the mollified integral second shifted moment of such $L(s,f)$ when the mollifier is a Dirichlet polynomial of length less than $T^{1/6-\varepsilon}$, where $\varepsilon>0$ is small. Recently, Bernard \cite{Bernard} has proved the asymptotic behavior of the smooth mollified shifted second moment of such $L(s,f)$ where the mollifier is a Dirichlet polynomial of length less than $T^{5/27}$, with the additional requirement that the shifts are $o\left(1/\log^2(T)\right)$. (See \cite[Remark 2]{Bernard}.)

To deduce the upper bound on $\mu_L$ appearing in Theorem \ref{thm:smallgaps}, we study the pair correlation of nontrivial zeros of $L(s) \in \selberg$ given by Murty and Perelli \cite{murty-perelli}. We employ an argument of \cite{GGOS} with a new idea of Carniero, Chandee, Littmann, and Milinovich \cite{CCLM} to prove the list of upper bounds given in Theorem~\ref{thm:smallgaps}. 

\smallskip
The article is organized as follows. For primitive $\GL(2)$ $L$-functions, we prove Theorem~\ref{thm:gaps}  on large gaps between zeros of $L(1/2+it,f)$ in Section~\ref{sec:gaps}. We prove Theorem~\ref{thm:mixed} on the mixed second moments of derivatives of $L(1/2+it,f)$ in Section~\ref{mixed} and the proof of Theorem~\ref{thm:shiftedmoment} regarding shifted moments  in Section~\ref{sec:shiftproof}. For primitive $L$-functions in the Selberg class, we prove Theorem~\ref{thm:smallgaps}} on small gaps between zeros of $L(s)$ in Section~\ref{smallgaps}.

%%%%%%%%%%%%%%%%% PROOF OF LARGE GAPS THEOREM %%%%%%%%%%%%%%%%%

\section{Proof of Theorem~\ref{thm:gaps}}\label{sec:gaps}

We now show that Theorem \ref{thm:gaps} follows from Theorem \ref{thm:mixed}. We closely follow the proof of~\cite[Theorem 1]{caroline}, which in turn is a variation of a method of Hall \cite{Hall} using some ideas due to Bredberg \cite{Bredberg}.

Define the function
\begin{equation}\label{eq:testfctn}
g(t) \ := \  e^{i\rho t \log T}L\left(\tfrac{1}{2}\!+\!it,f \right),
\end{equation}
where $\rho$ is a real constant that will be chosen later to optimize our result. Fix a primitive
form, $f$, on $\GL(2)$, so that $f$ corresponds either to a primitive holomorphic cusp form or to a primitive Maass cusp form. Let $\widetilde\gammaf$ denote an
ordinate of a zero of $L(s,f)$ on the (normalized) critical line
$\Re(s)=1/2$. Note that $g(t)$ has the same zeros as
$L\left(1/2+\!it,f\right)$, that is, $g(t)\!=\!0$ if and only if
$t\!=\!\tgammaf$. Let $\left\{ \tgammaf(1),\tgammaf(2),\ldots,\tgammaf(N)\right\}$
denote the set of distinct zeros of $g(t)$ in the interval $[T,2T]$. Let
\begin{equation}
  \kappa_T\ := \  \max\left\{\tgammaf(n\!+\!1)-\tgammaf(n): T+1\leq\tgammaf(n) \leq 2T-1\right\},
\end{equation}
and note that $\Lambda_f\geq\limsup_{T\ra\infty}\kappa_T$. Without loss of
generality, we may assume that
\begin{equation}\label{eq:zeroassumption}
\tgammaf(1)-T\ll1\quad\text{and}\quad 2T-\tgammaf(N)\ll1,
\end{equation}
as otherwise there exist zeros $\tgammaf(0)\leq\tgammaf(1)$ and
$\tgammaf(N+1)\geq\tgammaf(N)$ such that $\tgammaf(0)-\tgammaf(1)$
and $\tgammaf(N+1)-\tgammaf(N)$ are $\gg1$, and the theorem holds for
this reason.

The following lemma, due to Bredberg \cite[Corollary 1]{Bredberg}, is a variation of Wirtinger's inequality \cite[Theorem 258]{wirtinger}.
\begin{lemma}[Wirtinger's inequality]\label{lem:wirtinger}
  Let $y : [a,b]\ra\C$ be a continuously differentiable function, and suppose
  that $y(a)=y(b)=0$. Then
  \begin{equation}
    \int_a^b|y(x)|^2\d x\ \leq\ \left(\frac{b-a}\pi\right)^2\int_a^b|y'(x)|^2\d x.
  \end{equation}
\end{lemma}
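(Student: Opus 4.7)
The plan is to reduce to the canonical interval $[0,\pi]$ by an affine change of variable and then use a Fourier sine-series expansion together with Parseval's identity. Setting $u = \pi(x-a)/(b-a)$ and $\widetilde y(u) := y(a + (b-a)u/\pi)$ turns the desired inequality into the assertion that
\begin{equation*}
\int_0^\pi |\widetilde y(u)|^2 \d u \ \leq\ \int_0^\pi |\widetilde y'(u)|^2 \d u
\end{equation*}
whenever $\widetilde y\in C^1([0,\pi],\C)$ vanishes at both endpoints; the factor $((b-a)/\pi)^2$ comes out of the Jacobians in the two integrals.

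On $[0,\pi]$, the system $\{\sqrt{2/\pi}\,\sin(nu)\}_{n\geq 1}$ is a complete orthonormal basis of $L^2([0,\pi],\C)$, so I would expand
\begin{equation*}
\widetilde y(u) \ = \ \sum_{n=1}^\infty c_n \sin(nu),
\qquad c_n \ = \ \frac{2}{\pi}\int_0^\pi \widetilde y(u)\sin(nu)\d u,
\end{equation*}
with convergence in $L^2$. The key computation is that the Fourier cosine coefficients of the derivative satisfy
\begin{equation*}
\frac{2}{\pi}\int_0^\pi \widetilde y'(u)\cos(nu)\d u
\ = \ \frac{2}{\pi}\bigl[\widetilde y(u)\cos(nu)\bigr]_0^\pi + \frac{2n}{\pi}\int_0^\pi \widetilde y(u)\sin(nu)\d u
\ = \ n\, c_n,
\end{equation*}
where the boundary term vanishes precisely because $\widetilde y(0)=\widetilde y(\pi)=0$. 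Together with the fact that $\{1/\sqrt\pi\}\cup\{\sqrt{2/\pi}\cos(nu)\}_{n\geq 1}$ is an orthonormal basis for $L^2[0,\pi]$, this also forces the zeroth cosine coefficient of $\widetilde y'$ to vanish (it equals $(\widetilde y(\pi)-\widetilde y(0))/\pi=0$).

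Parseval's identity then yields
\begin{equation*}
\int_0^\pi |\widetilde y(u)|^2 \d u \ = \ \frac{\pi}{2}\sum_{n=1}^\infty |c_n|^2
\qquad\text{and}\qquad
\int_0^\pi |\widetilde y'(u)|^2 \d u \ = \ \frac{\pi}{2}\sum_{n=1}^\infty n^2|c_n|^2,
\end{equation*}
and since $n^2\geq 1$ for every $n\geq 1$, the inequality on $[0,\pi]$ follows immediately, completing the proof after undoing the change of variables. The only step that requires any care is the termwise differentiation of the sine series, which I handle not by pointwise manipulation of series but via the integration-by-parts calculation above; this is where the boundary condition $y(a)=y(b)=0$ is actually used and is the one point where the argument would break down without that hypothesis.
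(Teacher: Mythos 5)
Your proof is correct. Note, however, that the paper does not actually prove this lemma at all: it simply quotes it as Corollary 1 of Bredberg, which in turn is a variation of the classical Wirtinger inequality in Hardy--Littlewood--P\'olya (Theorem 258), extended to complex-valued functions. What you have written is essentially the standard textbook proof of that classical inequality, made self-contained: rescale to $[0,\pi]$, expand in the sine basis, relate the cosine coefficients of $\widetilde y'$ to those of $\widetilde y$ by integration by parts (this is exactly where $y(a)=y(b)=0$ enters, as you correctly isolate), and compare via Parseval using $n^2\geq 1$. All the steps check out: the Jacobian bookkeeping gives precisely the factor $\left(\frac{b-a}{\pi}\right)^2$, the boundary terms and the zeroth cosine coefficient vanish by the hypothesis, and the complex-valued case is harmless since the basis functions are real and Parseval holds in complex $L^2$ (indeed, for the inequality you only need Bessel applied to $\widetilde y'$, so completeness of the cosine system is not even essential). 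So the difference is one of presentation rather than substance: the paper outsources the statement to the literature, while your argument supplies a short, complete proof that could stand in its place.
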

Let $\varepsilon>0$ be small. By the definition of $\kappa_T$ and Lemma \ref{lem:wirtinger}, for each pair of
consecutive zeros of $g(t)$ in the interval $[T,2T]$ we have
\begin{equation}\label{eq:wirtapp}
  \int_{\tgammaf(n)}^{\tgammaf(n+1)}|g(t)|^2\d t\ \leq\ \frac{\kappa_T^2}{\pi^2}
  \int_{\tgammaf(n)}^{\tgammaf(n+1)}|g'(t)|^2\d t.
\end{equation}
Upon summing both sides of the equation in~(\ref{eq:wirtapp}) for
$n=1,2,\ldots,N-1$, we have
\begin{equation}\label{eq:wirtinterval}
  \int_{\tgammaf(1)}^{\tgammaf(N)}|g(t)|^2\d t\ \leq\ \frac{\kappa_T^2}{\pi^2}
  \int_{\tgammaf(1)}^{\tgammaf(N)}|g'(t)|^2\d t.
\end{equation}
Subconvexity bounds for primitive $\GL(2)$ $L$-functions along the critical line yield
$|g(t)| \ll |t|^{1/2 - \delta}$, where $\delta>0$ is a fixed constant.
Good~\cite{goodsubconvex} established subconvexity in the $t$-aspect for holomorphic forms
of full level, achieving $\delta < 1/6$. Meurman~\cite{meur} achieved the same for
Maass forms of full level. Jutila and Motohashi~\cite{jutila-motohashi} obtained a hybrid
bound in the $t$- and eigenvalue aspects for full-level holomorphic and Maass forms.
Blomer and Harcos~\cite{blomer-harcos} obtained $\delta < 25/292$ for
holomorphic and Maass forms of arbitrary level and nebentypus in the $t$-aspect.
Michel and Venkatesh~\cite{mvsubconvexity} proved general subconvexity for $\GL(2)$.
By ~(\ref{eq:zeroassumption}), we have
%\begin{equation}\label{eq:fullinterval}
%\int_T^{2T}|g(t)|^2\d t\ \leq\ \frac{\kappa_T^2}{\pi^2}\int_{T}^{2T}|g'(t)|^2\d t +
%O\left(T^{\frac23}(\log T)^{\frac56}\right).
%\end{equation}
\begin{equation}\label{eq:fullinterval}
  \int_T^{2T}|g(t)|^2\d t \leq \frac{\kappa_T^2}{\pi^2}\int_{T}^{2T}|g'(t)|^2\d t +
  O\left(T^{1 - 2\delta}\right).
\end{equation}
Observing that $|g(t)|^2=\left|L\left(1/2+\!it\right)\right|^2$ and
\begin{equation}
  |g'(t)|^2\ = \ \left|L'\left(\tfrac12\!+\!it,f \right)\right|^2 + \rho^2\log^2T
  \left|L\left(\tfrac12\!+\!it,f \right)\right|^2 + 2\rho\log T\cdot
  \Re\left(L'\left(\tfrac12\!+\!it,f \right)
    \overline{L\left(\tfrac12\!+\!it,f\right)}\right),
\end{equation}
Theorem~\ref{thm:mixed} implies that
\begin{align}
  \int_T^{2T}\left|L\left(\tfrac12\!+\!it,f \right)\right|^2\d t &\ = \  2c_f T \log T + O(T), \notag\\
  \int_T^{2T}L'\left(\tfrac12\!+\!it,f \right)\overline{L\left(\tfrac12\!+\!it,f \right)}\d t &\ = \  -2c_f T\left(\log T\right)^2+O\left(T\log T\right),
\intertext{and}
  \int_T^{2T}\left|L'\left(\tfrac12\!+\!it,f \right)\right|^2\d t &\ = \  \frac83 c_f T \left(\log T\right)^3+O\left(T\left(\log T\right)^2\right),
\end{align}
where $c_f$ denotes the residue of the simple pole of $L(s,f\!\times\! \overline{f})$ at $s\!=\!1$. Combining
these estimates and noting that
\begin{equation}
\frac{1}{1+O\left(\frac1{c_f}\left(\log T\right)^{-1}\right)} \ = \  1+ O\left(\frac{1}{c_f}\left(\log T\right)^{-1}\right),
\end{equation}
we find that
%  + O\left(\frac1{c_f}(\log T)^{-\delta}\right)$,we find that
%\begin{align}
%  &2c_f T\log T\leq\frac{\kappa_T^2}{\pi^2}
%  %\left(\frac83c_f T(\log T)^3+2\rho^2c_f T(\log T)^3
%    %-4\rho c_f T(\log T)^3+O\left(T(\log T)^{3-\delta}\right)\right) \\
%  %&1\leq\frac{\kappa_T^2}{\pi^2}
%  %\frac{3\rho^2-6\rho+4}{3}(\log T)^2
%  %\left(1+O\left(\frac1{c_f}(\log T)^{-\delta}\right)\right) \\
%  &\frac{\kappa_T^2}{\pi^2}\ge\frac3{3\rho^2-6\rho+4}(\log T)^{-2}
%  \left(\frac1{1+O\left(\frac1{c_f}(\log T)^{-\delta}\right)}\right)
%\end{align}
%Noting that
%\begin{equation}
%  \frac1{1+O\left((\log T)^{-\delta}\right)} = 1
%  + O\left(\frac1{c_f}(\log T)^{-\delta}\right),
%\end{equation}

\begin{equation}\label{eq:almostdone}
  \frac{\kappa_T^2}{\pi^2} \geq \frac3{3\rho^2-6\rho+4}\left(\log T\right)^{-2}
  \left(1 + O\left(\frac1{c_f}(\log T)^{-1}\right)\right).
\end{equation}
The polynomial $3\rho^2-6\rho+4$ is minimized by $\rho\!=\!1$. Therefore, inserting this choice of $\rho$ in \eqref{eq:almostdone}, we obtain
\begin{equation}
  \kappa_T\ \geq\ \frac{\sqrt3\pi}{\log T}
  \left(1 + O\left(\frac1{c_f}\left(\log T\right)^{-1}\right)\right).
\end{equation}

%%%%%%%%%%%%%%%%% PROOF OF MIXED MOMENT THEOREM %%%%%%%%%%%%%%%%%

\section{Proof of Theorem~\ref{thm:mixed}}\label{mixed}
In this section we prove Theorem~\ref{thm:mixed}. As in the previous section, we closely follow an argument of ~\cite[Theorem 3]{caroline}, and we include the details here for completeness.

First, note that for $t\in\left[T,2T\right]$, we have
\begin{equation}
  \left(\frac{t}{2\pi}\right)^{-2(\alpha + \beta)} \ = \  T^{-2(\alpha + \beta)}
  \left(1 + O\left(\frac{1}{\log{T}}\right)\right)
\end{equation}
as $T\to\infty$. Also, we have
\begin{equation}
L(1 \pm \alpha \pm \beta,f\!\times\!\overline{f}) \ = \   \frac{\pm c_f}{\alpha\!+\!\beta} + O(1),
\end{equation}
and
\begin{equation}
T^{-2(\alpha+\beta)} \ = \  \sum_{n=0}^{\infty}\frac{(-1)^n2^n(\alpha\!+\!\beta)^n(\log T)^{n}}{n!}.
\end{equation}
Thus, by Theorem \ref{thm:shiftedmoment}, we have
\begin{equation}
 \int_{T}^{2T}L\left(\tfrac{1}{2} \!+\! \alpha \!+\! it, f \right)
  L\left(\tfrac{1}{2} \!+\! \beta \!-\! it, \overline{f}\right)\d t \ = \  F(\alpha \!+\! \beta; T) + O(T),
\end{equation}
and
\begin{equation}
  F(\alpha \!+\! \beta; T) \ := \  c_{f}T
  \sum_{n \geq 0}\frac{(-1)^{n}2^{n+1}(\alpha \!+\! \beta)^{n}(\log{T})^{n+1}}{(n\!+\!1)!}.
\end{equation}
Let
\begin{equation}
  R(\alpha,\beta; T) \ := \  \int_{0}^{T}L\left(\tfrac{1}{2} \!+\! it \!+\! \alpha, f\right)
  L\left(\tfrac{1}{2} \!-\! it \!+\! \beta, \overline{f}\right)\d t - F(\alpha \!+\! \beta; T).
\end{equation}
Then $R(\alpha,\beta; T)$ is an analytic function of two complex variables
$\alpha, \beta$ for $\Re(\alpha), \Re(\beta)<1/2$;
moreover, if $|\alpha|, |\beta| \ll 1/\log T$, then Theorem~\ref{thm:shiftedmoment} implies that
\begin{equation}\label{eq:R-error}
R(\alpha,\beta;T) \ = \  O(T)
\end{equation}
as $T\to \infty$. Differentiating, we find
\begin{equation}\label{eq:R-diff}
  \int_{T}^{2T}L^{(\mu)}\left(\tfrac{1}{2}\!+\!it\!+\!\alpha, f\right)
  L^{(\nu)}\left(\tfrac{1}{2}-it\!+\!\beta, \overline{f} \right)\d t
  \ = \  \frac{\partial^{\mu+\nu}F(\alpha\!+\!\beta;T)}
  {\partial\alpha^{\mu}\partial\beta^{\nu}} + R_{\mu,\nu}(\alpha,\beta;T),
\end{equation}
where $\mu$ and $\nu$ are fixed nonnegative integers and
\begin{equation}
  R_{\mu,\nu}(\alpha,\beta;T) \ := \  \frac{\partial^{\mu+\nu}R(\alpha,\beta;T)}
  {\partial \alpha^{\mu} \partial \beta^{\nu}}.
\end{equation}
Let $\Omega = \left\{\omega\in\C : |\omega-\alpha| = 1/\log T\right\}$.
By contour integration and Cauchy's integral formula, \eqref{eq:R-error} implies that
\begin{align}\label{eq:R-Cauchy}
  \frac{\partial^{\mu}}{\partial \alpha^{\mu}} R(\alpha,\beta;T)
  \ = \  \frac{\mu !}{2 \pi i}
  \int_{\Omega}\frac{R(\omega,\beta;T)}{(\omega - \alpha)^{\mu+1}}\d \omega
  &\ = \  O\left(\mu ! T(\log{T})^{\mu}\right).
\end{align}
A second application of Cauchy's integral formula yields
\begin{equation}\label{eq:R-Cauchy2}
  R_{\mu,\nu}(\alpha,\beta;T)
  \ := \  \frac{\partial^{\mu+\nu}}{\partial \alpha^{\mu} \partial \beta^{\nu}}
  R(\alpha,\beta;T) \ = \  O\left(\mu ! \nu ! T(\log{T})^{\mu + \nu}\right).
\end{equation}
Setting $\alpha\!=\!\beta \!=\! 0$, we obtain
\begin{equation}\label{eq:R-diff2}
  \int_{T}^{2T}L^{(\mu)}\left(\tfrac{1}{2} \!+\! it, f \right)
  L^{(\nu)}\left(\tfrac{1}{2} \!-\! it, \overline{f} \right)\d t
  \ = \  \left[\frac{\partial^{\mu+\nu}F(\alpha\!+\!\beta;T)}
    {\partial \alpha^{\mu} \partial \beta^{\nu}} \right]_{\alpha=\beta=0}
  + O\left(\mu ! \nu ! T(\log{T})^{\mu + \nu}\right).
\end{equation}
Differentiating $F(\alpha \!+\! \beta; T)$ with respect to $\alpha$ and $\beta$, we find
\begin{equation}\label{eq:F-diff}
  \left[\frac{\partial^{\mu + \nu} F(\alpha \!+\! \beta; T)}
    {\partial \alpha^{\mu} \partial \beta^{\nu}} \right]_{\alpha = \beta = 0}
  \ = \  c_f T\frac{(-1)^{\mu+\nu}2^{\mu+\nu+1}(\log{T})^{\mu+\nu+1}}{\mu \!+\! \nu \!+\! 1}.
\end{equation}
Inserting \eqref{eq:F-diff} in \eqref{eq:R-diff2}, the theorem now follows by summing over the dyadic intervals $[T/2,T]$, $[T/4,T/2]$, $[T/8,T/4], \ldots$.

%%%%%%%% PROPERTIES OF L-FUNCTIONS %%%%%%%%%%%%%%%%%%%%%

\section{Properties of $L$-functions}\label{properties}
In this section we collect some basic facts about the $L$-functions under consideration. The $L$-functions treated by
Theorem~\ref{thm:gaps} have an automorphic characterization; they are
associated to primitive cusp forms on $\GL(2)$ over $\Q$. We
summarize some of their properties below. On the other hand, we prove
Theorem~\ref{thm:smallgaps} for the large set of $L$-functions in the
analytic axiomatic classification of $L$-functions due to
Selberg~\cite{selbergclass}, which we denote $\selberg$;
see Conrey and Ghosh~\cite{conreyghosh}, Murty~\cite{murty}, and
Kaczorowski and Perelli~\cite{kaczorowskiperelli} for the basic
properties of $\selberg$. 

Though it is certain that the primitive automorphic cuspidal $\GL(2)$
$L$-functions we consider are members of the Selberg class, and the
membership of primitive holomorphic cusp forms has been established,
it has not yet been established in the real-analytic case of Maass cusp
forms. Most notably, the Ramanujan hypothesis for Maass forms is not yet
a theorem. Fortunately, we do not need to assume the Ramanujan hypothesis (or
the Generalized Riemann Hypothesis) for Theorem~\ref{thm:gaps}. Likewise, there are some specific
assumptions we do need for Theorem~\ref{thm:gaps} that we do not need
for Theorem~\ref{thm:smallgaps}.

Therefore, the two sets of assumptions we need for our two main theorems
have nontrivial intersection but one set is not a proper subset of the
other. The approach we take to presenting these two sets of hypotheses is as follows.
First, we will present the axioms of the Selberg class $\selberg$ needed for
Theorem~\ref{thm:smallgaps}. Then, we give information specific to the automorphic
$\GL(2)$ $L$-functions to which Theorem~\ref{thm:gaps} applies.
\subsection{The Selberg class $\selberg$}\label{sec:selberg}
For our purposes, the following axiomatic definition is sufficient, and we 
follow~\cite{murty-perelli} in our presentation.
\begin{enumerate}[topsep=0in,label=(\textit{\roman*})]
\item\label{selberg:dirichlet} (\textit{Dirichlet series})\quad
  Every $L \in\pazocal S$ is a Dirichlet series
  \be L(s)\ = \ \sum_{n=1}^\infty \frac{a_L(n)}{n^{s}}\ee absolutely convergent for
  $\Re(s)>1$.
\item\label{selberg:continuation} (\textit{Analytic continuation})\quad
  There exists an integer $a\geq0$
  such that $(s-1)^aL(s)$ is an entire function of finite order.
\item\label{selberg:funeq} (\textit{Functional equation})\quad
  Every $L \in\selberg$ satisfies a functional equation of type
  \be L_\infty(s)L(s)\ =: \ \Lambda(s)\ = \ \epsilon\overline\Lambda(1-s),\ee
  where $\overline\Lambda(s)$ denotes $\overline{\Lambda(\overline s)}$, and
  \be L_\infty\ = \ Q^s\prod_{j=1}^r\Gamma(w_j s+\mu_j)\ee
  with $Q>0,w_j>0,\Re(\mu_j)\geq0$, and $|\epsilon|=1$. The \textit{degree} of $L$ is given
  by
  \be\label{degree}
  m_L\ :=\ 2\sum_{j=1}^{r}w_j.
  \ee
\item\label{selberg:rama} (\textit{Ramanujan hypothesis})\quad
  For all positive integers $n$, we have $a_L(n)\ll n^{o(1)}$.
\item\label{selberg:euler} (\textit{Euler product})\quad
  Every $L \in\selberg$ satisfies
  \be\log L(s)\ = \ \sum_{n=1}^\infty \frac{b_L(n)}{n^{s}},\ee
  where $b_L(n)=0$ unless $n=p^\ell$ for some $\ell\geq 1$, and $b_L(n)\ll n^\theta$
  for some $\theta<1/2$.
\end{enumerate}

\subsection{Properties of $\GL(2)$ $L$-functions}
In this section we collect some basic facts and hypotheses concerning $L$-functions attached to primitive (holomorphic or Maass) cusp forms on $\GL(2)$. We begin by combining some general remarks from~\cite[Section 1.1]{CFKRS}, \cite[Section 2]{rudnicksarnak}, and \cite[Section 3.6]{rubinstein}. For a more in-depth study of these $L$-functions, we refer the reader to~\cite[Chapter 5]{IK}.

Let $f$ be a primitive (holomorphic or Maass) cusp form on $\GL(2)$ over $\Q$ with level $q$.
For $\Re(s)>1$, let
\be L(s,f) \ := \  \sum_{n=1}^\infty \frac{a_f(n)}{n^s}
\ = \ \prod_p\left(1-\frac{\alpha_f(p)}{p^s}\right)^{-1}
\left(1-\frac{\beta_f(p)}{p^s}\right)^{-1}\ee
be the global $L$-function attached to $f$ (as defined by Godement and Jacquet
in~\cite{GJ} and Jacquet and Shalika in~\cite{JS}), where the Dirichlet
coefficients $a_f(n)$ have been normalized so that $\Re(s)=1/2$ is the critical
line of $L(s,f)$. The numbers $\alpha_f,\beta_f$ are called the non-archimedean
Satake or Langlands parameters. We assume $L(s,f)$ is
primitive; that is, we assume $L(s,f)$ cannot be written as the product of two degree 1 $L$-functions. Then $L(s,f)$ admits an analytic continuation to an entire function of order 1. Additionally, there is a root number $\ep_f\in\C$ with $|\ep_f|=1$
and a function $L_\infty(s,f)$ of the form
\be\label{eq:linfty}
L_\infty(s,f)\ = \ P(s)Q^s\Gamma(w_js+\mu_1)\Gamma(w_js+\mu_2),\ee
where $Q>0$, $w_j>0$, $\Re(\mu_j)\geq0$, and $P$ is a polynomial whose only
zeros in $\sigma>0$ are the poles of $L(s)$, such that the completed
$L$-function
\be \Lambda(s,f) \ := \  L_\infty(s,f)L(s,f) \ee
is entire, and
\be \Lambda(s,f)\ = \ \ep_f \Lambda(1-s,\overline f), \ee
where $\overline f(z)=\overline{f(\overline z)}$,
$\Lambda(s,\overline f)=\overline{\Lambda}(s,f)=\overline{\Lambda(\overline s,f)}$, etc.
It will be convenient to write this functional equation in asymmetric form
\be\label{eq:funeqasymmetric}
L(s,f)\ = \ \ep_f \Phi_f(s) L(1-s,\overline f), \ee
where $\Phi_f(s)=L_\infty(1-s,\overline f)/L_\infty(s,f)$.

In light of this, axioms \ref{selberg:dirichlet}, \ref{selberg:continuation},
and \ref{selberg:funeq} of $\selberg$ for $L(s,f)$ are
satisfied.\footnote{The condition that
  $\Re(\mu_j)\geq0$ in axiom \ref{selberg:funeq} is not proven in the case of
  Maass cusp forms, though it is conjectured to hold, and is immaterial to the proof of Theorem~\ref{thm:gaps}. See~\cite[p.\,13]{bumptrace} and~\cite[\S1.5]{sarnakautomorphic}.}
We now isolate the four properties on $L(s,f)$ for $f$ a primitive cusp form on
$GL(2)$ over $\Q$ that we will make use of in our proof of
Theorem~\ref{thm:gaps}. All of these properties are known for such $f$;
that is, none are conjectural.
\begin{Hlist}
\item \label{hyp:entire} $L(s,f)$ is entire.
\item \label{hyp:funeq} $L(s,f)$ satisfies a functional equation
  of the special form
  \be
  \Lambda(s,f)\ := \ L_\infty(s,f)L(s,f)\ = \ \ep_f\Lambda(1-s,\overline f),
  \ee
  where
  \be L_\infty(s,f)
  \ = \ Q^s\Gamma\left(\tfrac12s+\mu_1\right)\Gamma\left(\tfrac12s+\mu_2\right), \ee
  with $\{\mu_j\}$ stable under complex conjugation and the other notation
  the same as in~\eqref{eq:linfty}. Note that in the
  notation of~(\ref{eq:linfty}), $w_j=1/2$, which is conjectured to hold for
  arithmetic $L$-functions. The numbers $\mu_1,\mu_2$ are called the archimedean
  Langlands parameters.
\item \label{hyp:convolution}
  The convolution Dirichlet series given by
  \be
  L(s,f\times\overline f)\ :=\ \sum_{n=1}^\infty\frac{\left|a_f(n)\right|^2}{n^s},\qquad\Re(s)>1
  \ee
  is an $L$-function whose analytic continuation has a simple pole at $s=1$.
  (This is conjectured to be equivalent to $L(s,f)$ being a primitive
  $L$-function.) We denote the residue of this simple pole by $c_f$.
\item \label{hyp:coeffsquaredsum}
  For sufficiently large $X$,
  $\sum_{n\leq X}\left|a_f(n)\right|^2 \ll X$.
\end{Hlist}
Note that \ref{hyp:entire} is true by general arguments for $L$-functions associated to
primitive cupsidal automorphic representations on $\GL(2)$.
In the case that $f$ is holomorphic of weight $k$ and level $q$, the Dirichlet
series associated to $L(s,f)$ is formed from the (normalized) coefficients
$\lambda_f(n)$ in the Fourier expansion
\be f(z)\ = \ \sum_{n=1}^\infty\lambda_f(n)n^{(k-1)/2}e(nz), \ee
which satisfy the Deligne divisor bound
$\left|\lambda_f(n)\right|\leq d(n)$. Hence, axiom~\ref{selberg:rama} is
satisfied for $f$, though this is immaterial to our arguments towards
Theorem~\ref{thm:gaps}. For $L(s,f)$ attached to such holomorphic
$f$, \ref{hyp:funeq} holds for $Q=\pi^{-1}$, $\mu_1=(k-1)/2$ and
$\mu_2=(k+1)/2$.
In the case that $f$ is instead a Maass cusp form, ~\ref{hyp:funeq} is known
to hold for $Q=\pi^{-1}$ and complex $\mu_j$. We only require the known condition that
the set $\{\mu_j\}$ is stable under complex conjugation.

We note that ~\ref{hyp:convolution} was established by the work of Rankin and Selberg for
Hecke (holomorphic) cusp forms; it is established in generality far exceeding
our needs by Mœglin and Waldspurger~\cite{MW}; see~\cite[\S5.11--5.12]{IK} for
details.

For holomorphic $f$, ~\ref{hyp:coeffsquaredsum} holds. Actually, it is known that
\be\sum_{n\leq x}\left|a_f(n)\right|^2 = c_f x + o(x);\ee
see~\cite[Proposition 5.1]{micah-nathan}.
For real-analytic $f$, ~\ref{hyp:coeffsquaredsum} holds;
see~\cite[\S3.2]{harcos}
and~\cite[Theorem 3.2, (8.7), and (9.34)]{iwaniecspectral}.

Last, we introduce the notion of the analytic conductor $\mathfrak q(s,f)$ for
an automorphic $L$-function attached to a cusp form of level $q$ on $\GL(2)$.
In this case, specializing Harcos~\cite{harcos}, who follows~\cite{iwaniecsarnak},
\be \mathfrak q(s,f) \ := \ \frac{q}{(2\pi)^2}
\left|s+2\mu_1\right|\left|s+2\mu_2\right|.\ee

%%%%%%%%%%%%%%%%% PROOF OF SMALL GAPS %%%%%%%%%%%%%%%%%
\section{Proof of Theorem  \ref{thm:smallgaps}}\label{smallgaps}
Let $L(s)$ be a primitive $L$-function in the Selberg class $\selberg$. Recall from \eqref{eq:numLzeros} and \eqref{eq:muLdef} the definitions
\[
N(T,L)\ := \ \sum_{0<\gammaL \leq T} 1
\]
and
\[
\mu_L\ := \  \liminf_{n\to \infty}\,\frac{\gammaL(n\!+\!1)-\gammaL(n)}{N(T,L)},
\]
where $\gammaL(n)$ denotes the $n$th nontrivial zero of $L(s)$. In this section, assuming the Generalized Riemann Hypothesis for $L(s)$, we prove an upper bound for $\mu_L$ using ideas first introduced by Montgomery~\cite{montgomerypc} to study the pair correlation of zeros of $\zeta(s)$. 

Assuming the Riemann Hypothesis and writing the nontrivial zeros of $\zeta(s)$ as $1/2+\gamma$, if $0\notin [\alpha,\beta]$ and $T\ra\infty$, the pair correlation conjecture is the statement that
\be\#\left\{0<\gamma,\gamma'<T:
  \alpha\leq\frac{(\gamma-\gamma')\log T}{2\pi}\leq\beta\right\}
\sim\left(\frac T {2\pi}\log T\right)\int_\alpha^\beta
\left(1-\left(\frac{\sin \pi u}{\pi u}\right)^2\right)\d u, \ee
which is consistent with the pair correlation function of eigenvalues of random
Hermitian matrices. Montgomery originally formulated his pair correlation conjecture for $\zeta(s)$,
but strong evidence has accumulated since he made his conjecture to suggest that
the nontrivial zeros of any general primitive $L$-function share the same
statistics as eigenvalues of matrices chosen randomly from a matrix ensemble
appropriate to the $L$-function; this philosophy is articulated by Katz and
Sarnak in~\cite{katzsarnakbook} and~\cite{katzsarnakarticle}.
In view of this, Montgomery's pair correlation
conjecture has been generalized to all $L$-functions in the Selberg class
$\selberg$.

Montgomery proved that $\zeta(s)$ satisfies his pair correlation hypothesis for
restricted support, and Murty and Perelli~\cite{murty-perelli} proved a general
version for all primitive $L$-functions in the Selberg class for restricted
support inversely proportional to the degree of the function. We use the
pair-correlation for zeros of primitive $L\in\pazocal S$ to establish an upper
bound on small gaps of primitive $L$-functions in the Selberg class.

Following~\cite{montgomerypc} and \cite{murty-perelli}, we let $\gammaL, \gammaL'$ denote ordinates of nontrivial zeros of $L(s)$ and put
\be\label{eq:falphadef}
F_L(\alpha)\ = \ F_L(\alpha,T)\ := \ \left(\frac{m_LT}{2\pi} \log T\right)^{-1}
\sum_{0<\gammaL,\gammaL'\leq T}
T^{im_L\alpha(\gammaL-\gammaL')}w(\gammaL-\gammaL'),\ee
where $w(u) =  4 /(4+u^2)$ and $m_L$ is the degree of $L$ as defined in \eqref{degree}. The pair correlation conjecture then states that as $T\ra\infty$, we have
\be F(\alpha)\ = \
\begin{cases}
  |\alpha|+m_LT^{-2|\alpha|m_L}\log T(1+o(1))+o(1),&\text{if }|\alpha|\leq 1, \\
  1+o(1),&\text{if }|\alpha|\geq 1,
\end{cases}\ee
uniformly for $\alpha$ in any bounded interval. Define the functions
\be \upsilon_L(n)\ := \ b_L(n)\log n
\ee
and
\be \label{eq:upsilondef} \upsilon_L(n,x)\ := \ \begin{cases}
 \displaystyle \upsilon_L(n)\left(\frac n x\right)^{1/2},&n\leq x, \\[.15in]
 \displaystyle \upsilon_L(n)\left(\frac x n\right)^{3/2},&n>x.
\end{cases}\ee
Murty and Perelli~\cite{murty-perelli} have proved the following result for $F_L(\alpha)$. 
\begin{prop}\label{prop:murtyperelli}
  With $L(s)\in\selberg$ as above and assuming the Generalized Riemann Hypothesis, let $\vep>0$ and
  $x=T^{\alpha m}$. Then, uniformly for $0\leq\alpha\leq(1-\vep)/m_L$ as
  $T\ra\infty$, we have
  \be F_L(\alpha)\ = \ \frac1{m_Lx\log T}\sum_{n=1}^\infty\upsilon(n,x)
  \overline{\upsilon(n,x)}+m_LT^{-2\alpha m_L}\log T(1+o(1))+o(1). \ee
\end{prop}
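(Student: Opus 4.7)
The plan is to follow Montgomery's pair-correlation method, generalized to the Selberg class. The first step is to reduce the double sum in \eqref{eq:falphadef} to an $L^2$-mean via a convolution identity. The kernel $w(u) = 4/(4+u^2)$ is, up to a multiplicative constant, the self-convolution of the Cauchy density $1/(\pi(1+u^2))$, which by a residue calculation gives the exact identity
\begin{equation*}
\sum_{0<\gamma,\gamma'\leq T} x^{i(\gamma-\gamma')} w(\gamma - \gamma') \ = \ \frac{2}{\pi} \int_{-\infty}^{\infty} \left|\sum_{0 < \gamma \leq T} \frac{x^{i\gamma}}{1+(t-\gamma)^2}\right|^2 dt,
\end{equation*}
with $x := T^{m_L \alpha}$ chosen so that $T^{i m_L\alpha(\gamma-\gamma')} = x^{i(\gamma-\gamma')}$. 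This reduces the problem to pointwise analysis of $S(t,x) := \sum_{0 < \gamma \leq T} x^{i\gamma}/(1+(t-\gamma)^2)$.

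The second step is an explicit formula. Applying the Guinand--Weil formula for $L \in \selberg$ with a Cauchy-type test function---equivalently, shifting the contour of $\int -\frac{L'}{L}(s)\, x^{s-1/2} K_t(s)\,ds$ from $\Re(s) = 3/2$ across the critical line to $\Re(s) = -1/2$---produces an identity of the schematic form
\begin{equation*}
S(t,x) \ = \ \tfrac{\pi}{2}\bigl[x^{-1/2-it} B_L(\tfrac12+it,x) + x^{-1/2+it}\,\overline{B_L(\tfrac12+it,x)}\bigr] + G(t,x) + E(t,x),
\end{equation*}
where $B_L(s,x) := \sum_n \upsilon_L(n,x) n^{-s}$ is a Dirichlet polynomial whose coefficients are dictated by the Fourier dual of the Cauchy kernel against the contour positions; this dual produces exactly the piecewise weighting $(n/x)^{1/2}$ for $n \leq x$ and $(x/n)^{3/2}$ for $n > x$ appearing in \eqref{eq:upsilondef}. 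Here $G(t,x)$ collects the archimedean contribution from the gamma factors of $L_\infty(s,f)$ crossed in the contour shift, and $E(t,x)$ is an error controlled using $b_L(n) \ll n^\theta$.

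The third step is the mean-square evaluation. Squaring $S(t,x)$ and integrating in $t$, the diagonal $|B_L|^2$ contribution is handled by the Montgomery--Vaughan mean value theorem,
\begin{equation*}
\int_0^T \bigl|B_L(\tfrac12+it,x)\bigr|^2 dt \ = \ \sum_n \frac{|\upsilon_L(n,x)|^2}{n}\bigl(T + O(n)\bigr),
\end{equation*}
which after normalization by $(m_L T (\log T)/(2\pi))^{-1}$ yields the first main term $\frac{1}{m_L x \log T}\sum_n |\upsilon_L(n,x)|^2$. The cross term between $B_L$ and $\overline{B_L}$ carries the oscillation $x^{-2it} = T^{-2 i m_L\alpha t}$; combined with $G(t,x)$, which encodes the functional-equation symmetry $s \mapsto 1-s$, this integrates against the oscillation in $t$ to yield the second main term $m_L T^{-2\alpha m_L}\log T$.

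The main obstacle is uniform control of error terms across the allowed range of $\alpha$. The hypothesis $0 \leq \alpha \leq (1-\varepsilon)/m_L$ is equivalent to $x \leq T^{1-\varepsilon}$, which is precisely the regime where the Montgomery--Vaughan off-diagonal error $O(n)$ is absorbed by $T$ in the diagonal. Extending the range of $\alpha$ beyond $1/m_L$ lies outside current technology and is closely tied to the pair-correlation conjecture for $\zeta(s)$ and its Selberg-class analogues.
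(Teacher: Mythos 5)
Your overall strategy is the right one, and it is in fact the strategy of the source: the paper does not prove Proposition~\ref{prop:murtyperelli} at all but quotes it from Murty--Perelli, whose argument is exactly Montgomery's method (Cauchy-kernel convolution identity, explicit formula, Montgomery--Vaughan mean value) adapted to $\selberg$ under GRH. Your Step 1 is correct. The problems are in Steps 2--3, and they are not just bookkeeping. First, your explicit formula is mis-normalized: with the paper's definition of $\upsilon_L(n,x)$, the weights $(n/x)^{1/2}$ and $(x/n)^{3/2}$ \emph{already} encode the decay, so the Dirichlet-polynomial piece must be $x^{-1/2}\sum_n \upsilon_L(n,x)\,n^{-it}$ (a polynomial in $n^{-it}$, with a single overall $x^{-1/2}$), not $x^{-1/2\mp it}\sum_n\upsilon_L(n,x)\,n^{-1/2\mp it}$; writing $B_L(\tfrac12+it,x)$ double-counts the $n^{-1/2}$ and introduces a spurious $x^{\mp it}$. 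This matters: Montgomery--Vaughan applied to your $B_L$ gives $T\sum_n|\upsilon_L(n,x)|^2/n$, and no normalization by $\bigl(\tfrac{m_LT}{2\pi}\log T\bigr)^{-1}$ turns that into $\tfrac{1}{m_Lx\log T}\sum_n|\upsilon_L(n,x)|^2$; the two quantities $\sum_n|\upsilon_L(n,x)|^2/n$ and $x^{-1}\sum_n|\upsilon_L(n,x)|^2$ are of the same order but differ asymptotically by a constant (already for $\zeta$, by a factor $4/3$), so your version of the diagonal does not reproduce the stated first main term.

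Second, and more seriously, you attribute the term $m_LT^{-2\alpha m_L}\log T(1+o(1))$ to the cross term between $B_L$ and $\overline{B_L}$ ``combined with $G(t,x)$.'' That mechanism cannot work: any such cross term carries the oscillation $(mnx^2)^{-it}$, whose phase never degenerates since $mnx^2>1$, so after integration over $t\in[0,T]$ it contributes $O(1/\log(mnx^2))$ per pair and is $o(1)$ after dividing by $\tfrac{m_LT}{2\pi}\log T$ --- it produces no term of size $T^{1-2\alpha m_L}\log^2 T$. In Montgomery's and Murty--Perelli's argument the second main term comes from the \emph{square} of the archimedean term in the explicit formula: the contribution of $L_\infty'/L_\infty$ (and the possible pole at $s=1$), of amplitude $\asymp m_L\,x^{-1}\log\tau$, whose mean square over $[0,T]$ is $\asymp m_L^2x^{-2}T\log^2T$ and, after normalization, gives exactly $m_LT^{-2\alpha m_L}\log T$ since $x^{-2}=T^{-2\alpha m_L}$; all cross terms (Dirichlet polynomial against archimedean term) are error terms. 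You have demoted the term that produces the second main term to a vague supporting role, so as written the sketch cannot be completed to a proof without replacing Steps 2--3 by the correct explicit formula and the correct identification of where each main term originates. (Two smaller points you should also address: the reduction from $\int_{-\infty}^{\infty}$ and from the full zero-sum to $0<\gammaL\leq T$ requires the standard truncation estimates, and the explicit-formula error terms involving $b_L(n)\ll n^{\theta}$ must be checked uniformly for $x\leq T^{1-\vep}$.)
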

Under the assumptions of Proposition~\ref{prop:murtyperelli} and Hypothesis~\ref{eq:selbergconj},  we may rewrite $F_L(\alpha)$ via partial summation (c.f.~\cite[\S 4]{murty-perelli} for details) as
\be\label{eq:falphapc} F_L(\alpha)\ = \ \alpha+m_LT^{-2\alpha m_L}\log T(1+o(1))+o(1).\ee

For the application to small gaps, we generalize an argument of
Goldston, Gonek, \"Ozl\"uk, and Snyder in~\cite{GGOS}, with a new modification  
of Carneiro, Chandee, Littmann, and Milinovich in~\cite{CCLM}.

We use the function $F_L(\alpha)$ to evaluate sums over differences of zeros. We first record a convolution formula involving $F_L(\alpha)$ that will prove to be useful. Let $r(u)\in L^1$, and define the Fourier transform by
\be\hat r(\alpha)\ = \ \int_{-\infty}^\infty r(u)e(\alpha u)\d u.\ee
If $\hat r(u)\in L^1$, we have almost everywhere that
\be r(u)\ = \ \int_{\infty}^\infty \hat r(\alpha)e(-u\alpha)\d\alpha.\ee
Multiplying~\eqref{eq:falphadef} by $\hat r(\alpha)$ and integrating, we obtain
\be\label{eq:falphaconvolution}
\sum_{0<\gammaL,\gammaL'\leq T}
r\left(\left(\gammaL-\gammaL'\right)\frac{m_L\log T}{2\pi}\right)
w(\gammaL-\gammaL')
\ = \
\left(\frac{m_LT}{2\pi} \log T\right)
\int_{-\infty}^\infty \hat r(\alpha)F_L(\alpha)\d\alpha.
\ee
We make use of the following bound for $F_L(\alpha)$.
\begin{lemma}\label{lem:falphaest}
  Assume the Generalized Riemann Hypothesis, and let $A>1$ be fixed. Then, as $T\ra\infty$, we have
  \be\int_{1/m_L}^\xi(\xi-\alpha)F_L(\alpha)\d\alpha\ \geq\ 
  \frac{\xi^2}2-\frac\xi2\left(1+\frac1{m_L^2}\right)+\frac1{3m_L^3}+o(1)\ee
  uniformly for $1\leq\xi\leq A$.
\end{lemma}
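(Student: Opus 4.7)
\emph{Proof plan for Lemma~\ref{lem:falphaest}.}

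The plan is to apply Montgomery's pair-correlation positivity argument to the convolution identity \eqref{eq:falphaconvolution} with a dilated Fej\'er kernel, and then to exploit the asymptotic \eqref{eq:falphapc} to evaluate the complementary piece on $[0,1/m_L]$ explicitly. Specifically, I would take
\[
r(u) \ := \ \left(\frac{\sin(\pi \xi u)}{\pi u}\right)^{2}, \qquad \hat r(\alpha) \ = \ K_{\xi}(\alpha) \ := \ (\xi - |\alpha|)_{+},
\]
in \eqref{eq:falphaconvolution}. Since both $r \geq 0$ and the weight $w(u) = 4/(4+u^{2}) > 0$, retaining only the diagonal terms $\gammaL = \gammaL'$ on the left-hand side yields the lower bound $r(0)w(0)N(T,L) = \xi^{2}N(T,L)$. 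Since $N(T,L) \sim (m_{L}T/2\pi)\log T$, this gives, using the evenness of $F_L$,
\[
\int_{0}^{\xi}(\xi - \alpha)F_{L}(\alpha)\,d\alpha \ \geq\ \frac{\xi^{2}}{2} + o(1),
\]
uniformly for $\xi \in [1,A]$.

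Next I would split $\int_{0}^{\xi} = \int_{0}^{1/m_{L}} + \int_{1/m_{L}}^{\xi}$ and evaluate the first piece via \eqref{eq:falphapc}. The polynomial term produces
\[
\int_{0}^{1/m_{L}}(\xi - \alpha)\alpha\,d\alpha \ = \ \frac{\xi}{2m_{L}^{2}} - \frac{1}{3m_{L}^{3}},
\]
while the exponential term, after the substitution $u = 2\alpha m_{L}\log T$, reduces to
\[
\int_{0}^{1/m_{L}}(\xi - \alpha)\,m_{L}T^{-2\alpha m_{L}}\log T\,d\alpha \ = \ \int_{0}^{2\log T}\!\Bigl(\xi - \tfrac{u}{2m_{L}\log T}\Bigr)\frac{e^{-u}}{2}\,du \ = \ \frac{\xi}{2}+o(1),
\]
since the tail $u > 2\log T$ is exponentially small and the secondary $u/(2m_{L}\log T)$ term contributes $O(1/\log T)$. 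Summing these contributions and subtracting from the lower bound $\xi^{2}/2 + o(1)$ produces exactly the claimed inequality
\[
\int_{1/m_{L}}^{\xi}(\xi - \alpha)F_{L}(\alpha)\,d\alpha \ \geq\ \frac{\xi^{2}}{2} - \frac{\xi}{2}\left(1 + \frac{1}{m_{L}^{2}}\right) + \frac{1}{3m_{L}^{3}} + o(1),
\]
with the uniformity in $\xi \in [1,A]$ inherited from that of \eqref{eq:falphapc}.

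The principal technical obstacle is that Proposition~\ref{prop:murtyperelli} supplies the asymptotic \eqref{eq:falphapc} uniformly only on $[0, (1-\vep)/m_{L}]$, whereas the evaluation above integrates to the closed endpoint $1/m_{L}$. One must therefore verify that the sliver contribution $\int_{(1-\vep)/m_{L}}^{1/m_{L}}(\xi - \alpha)F_{L}(\alpha)\,d\alpha$ is negligible in the double limit $T \to \infty$ followed by $\vep \to 0$. The natural way to control this is via a uniform pointwise bound on $F_{L}$ near the endpoint, which can be extracted from the Dirichlet-series expression for $F_{L}$ in Proposition~\ref{prop:murtyperelli} together with Hypothesis~\ref{eq:selbergconj}: near $\alpha = 1/m_L$ the exponential term $m_L T^{-2\alpha m_L}\log T$ is of size $O(T^{-2+o(1)}\log T)$, so that $F_L(\alpha)$ is asymptotically $\alpha + o(1)$ there, and the sliver integral is $O(\vep)$. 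Once that is in hand, the rest of the argument is the direct computation sketched above.
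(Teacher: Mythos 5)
Your proposal is correct and takes essentially the same route as the paper: the Fej\'er pair $r_\xi,\hat r_\xi$ (your kernel is just the paper's rescaled by $\xi^2$), the positivity/diagonal bound fed into \eqref{eq:falphaconvolution} with $N(T,L)\sim\frac{m_LT}{2\pi}\log T$, and the evaluation of the $[0,1/m_L]$ piece via \eqref{eq:falphapc}, giving exactly the paper's computation. Your closing remark about extending \eqref{eq:falphapc} from $[0,(1-\vep)/m_L]$ to the closed endpoint $1/m_L$ addresses a technical point the paper's proof passes over silently, and your proposed treatment of that sliver is sound.
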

\begin{proof}[Proof of Lemma~\ref{lem:falphaest}]
  Following~\cite{CCLM}, we start with the Fourier pair
  \be r_\xi(u)\ = \ \left(\frac{\sin\pi\xi u}{\pi\xi u}\right)^2\quad\text{and}\quad
  \hat r_\xi(\alpha)\ = \ \frac1{\xi^2}\max\left(\xi-|\alpha|,0\right).\ee
  Let $\mathfrak{m}_\rho$ denote the multiplicity of the zero with generic ordinate
  $1/2+\gamma_L$. By trivially replacing the count of zeros up to height $T$ (
  counted with multiplicity) with the diagonal of a weighted sum of $r_\xi$
  evaluated at differences in zeros and using the convolution
  formula~\eqref{eq:falphaconvolution}, we obtain
  \be\label{eq:xitrick}\begin{aligned}
    1+o(1)\ &\leq\ \left(\frac{m_LT}{2\pi}\log T\right)^{-1}\sum_{0<\gammaL\leq T}\mathfrak{m}_\rho\\
    &\leq\ \left(\frac{m_LT}{2\pi}\log T\right)^{-1}\sum_{0<\gammaL,\gammaL'\leq T}
    r_\xi\left(\left(\gammaL'-\gammaL\right)\frac{m_L\log T}{2\pi}\right)
    w\left(\gammaL-\gammaL'\right) \\
    &\ = \ \int_{-\xi}^\xi \hat r(\alpha)F(\alpha)\d\alpha.
  \end{aligned}\ee
  Leveraging~\eqref{eq:falphapc}, the evenness of the integrand allows us to
  write
  \be\label{eq:xiestimate}\begin{aligned}
    \int_{-\xi}^\xi \hat r(\alpha)F_L(\alpha)\d\alpha
    &\ = \ \frac2{\xi^2}\int_0^{1/m_L}(\xi-\alpha)F_L(\alpha)\d\alpha
    +\frac2{\xi^2}\int_{1/m_L}^\xi (\xi-\alpha)F_L(\alpha)\d\alpha \\
    &\ = \ \frac1{\xi}\left(1+\frac1{m_L^2}\right)-\frac23\frac1{\xi^2m_L^3}
    +\frac2{\xi^2}\int_{1/m_L}^\xi (\xi-\alpha)F_L(\alpha)\d\alpha+o(1)
  \end{aligned}\ee
  uniformly for $1\leq\xi\leq A$. Inserting~\eqref{eq:xiestimate}
  into~\eqref{eq:xitrick} establishes the lemma.
\end{proof}
We now prove the following result.
\begin{theorem}\label{thm:smallgaps1}
  Let $L(s)\in\selberg$ be primitive of degree $m_L$.
  Assume the Generalized Riemann Hypothesis and Hypothesis~\ref{eq:selbergconj}.
  Then, with $\kappa_{m_L}$ as in~\eqref{eq:kappadef} as $T\ra\infty$, we have
  \be\label{sum}\sum_{0<\gammaL-\gammaL'\leq\frac{2\pi\lambda}{m_L\log T}}1
  \geq\left(\frac12-\vep\right)\frac{m_LT}{2\pi}\log T
  \begin{aligned}[t]
    \Bigg(
    &\lambda-1+2\lambda\int_{0}^{1/m_L} \left(1-|\lambda\alpha|+\frac{\sin 2\pi|\lambda\alpha|}{2\pi}\right)\alpha\d\alpha \\
    &-4\pi\lambda^3\int_{\kappa_{m_L}}^{1/\lambda}\sin(2\pi\lambda\alpha)
    \left(\frac{\alpha^2}2-\frac\alpha2\left(1+\frac1{m_L^2}\right)
      +\frac1{3m_L^3}+o(1)\right)\d\alpha\Bigg).\end{aligned}\ee
\end{theorem}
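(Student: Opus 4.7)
The plan is to apply a Montgomery-type lower bound argument, combining Goldston--Gonek--\"Ozl\"uk--Snyder~\cite{GGOS} with the Carneiro--Chandee--Littmann--Milinovich~\cite{CCLM} refinement: test the convolution identity~\eqref{eq:falphaconvolution} against an even, bandlimited, extremal minorant of $\mathbf{1}_{[-\lambda,\lambda]}$.

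Concretely, I would take the Fourier pair
\[
\widehat r(\alpha)\ =\ \lambda\left(1-|\lambda\alpha|+\frac{\sin(2\pi|\lambda\alpha|)}{2\pi}\right)\mathbf{1}_{|\alpha|\leq 1/\lambda},\qquad r(u)\ =\ \left(\frac{\sin(\pi u/\lambda)}{\pi u/\lambda}\right)^{\!2}\cdot\frac{\lambda^2}{\lambda^2-u^2},
\]
so that $\widehat r(0)=\lambda$, $\widehat r(\pm 1/\lambda)=\widehat r'(\pm 1/\lambda)=0$; a direct calculation verifies $r(0)=1$, $0\leq r(u)\leq 1$ on $[-\lambda,\lambda]$, and $r(u)\leq 0$ for $|u|\geq\lambda$ (the first two using $\widehat r\geq 0$, and the last from the explicit formula for $r$). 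Substituting into~\eqref{eq:falphaconvolution} and decomposing the left-hand sum by whether $\gamma_L=\gamma_L'$ (contributing $r(0)N(T,L)=N(T,L)$), $0<|\gamma_L-\gamma_L'|\leq 2\pi\lambda/(m_L\log T)$ (at most $2N(\lambda,T)$, since $r\leq 1$ and $w\leq 1$), or $|\gamma_L-\gamma_L'|>2\pi\lambda/(m_L\log T)$ (non-positive by $r\leq 0$), one rearranges and uses $N(T,L)=\tfrac{m_L T}{2\pi}\log T(1+o(1))$ to obtain
\[
N(\lambda,T)\ \geq\ \frac{1}{2}\cdot\frac{m_L T}{2\pi}\log T\left(\int_{-\infty}^{\infty}\widehat r(\alpha)F_L(\alpha)\,d\alpha-1+o(1)\right).
\]

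Next I would evaluate $\int\widehat r\,F_L\,d\alpha=2\int_0^{1/\lambda}\widehat r\,F_L\,d\alpha$ by splitting at $\alpha=1/m_L$. On $[0,1/m_L]$, Proposition~\ref{prop:murtyperelli} supplies $F_L(\alpha)=\alpha+m_L T^{-2\alpha m_L}\log T(1+o(1))+o(1)$: the $\alpha$-term integrates explicitly to $2\lambda\int_0^{1/m_L}(1-|\lambda\alpha|+\tfrac{\sin 2\pi|\lambda\alpha|}{2\pi})\alpha\,d\alpha$, while the delta-like $m_L T^{-2\alpha m_L}\log T$ factor (under $u=2\alpha m_L\log T$) contributes $\widehat r(0)=\lambda$, combining with the $-1$ from the diagonal to produce the leading $\lambda-1$. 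On $[1/m_L,1/\lambda]$ I set $I(\xi)=\int_{1/m_L}^\xi(\xi-\alpha)F_L(\alpha)\,d\alpha$, so that $I''=F_L$ as a distribution and $I(1/m_L)=I'(1/m_L)=0$; integrating by parts twice and using $\widehat r(1/\lambda)=\widehat r'(1/\lambda)=0$ to kill the boundary terms yields $\int_{1/m_L}^{1/\lambda}\widehat r(\alpha)F_L(\alpha)\,d\alpha=\int_{1/m_L}^{1/\lambda}\widehat r''(\alpha)I(\alpha)\,d\alpha$ with $\widehat r''(\alpha)=-2\pi\lambda^3\sin(2\pi\lambda\alpha)$. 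On the sub-interval $[\kappa_{m_L},1/\lambda]$, where $\widehat r''$ carries the sign for which the inequality from Lemma~\ref{lem:falphaest} runs the right way, inserting $I(\alpha)\geq\tfrac{\alpha^2}{2}-\tfrac{\alpha}{2}(1+\tfrac{1}{m_L^2})+\tfrac{1}{3m_L^3}+o(1)$ produces the announced $-4\pi\lambda^3\int_{\kappa_{m_L}}^{1/\lambda}\sin(2\pi\lambda\alpha)\bigl(\tfrac{\alpha^2}{2}-\tfrac{\alpha}{2}(1+\tfrac{1}{m_L^2})+\tfrac{1}{3m_L^3}+o(1)\bigr)\,d\alpha$ term; the residual sub-interval $[1/m_L,\kappa_{m_L}]$ is treated using $I\geq 0$ and absorbed into the $o(1)$.

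The principal obstacle will be the sign-tracking of $\widehat r''(\alpha)=-2\pi\lambda^3\sin(2\pi\lambda\alpha)$ across $[1/m_L,1/\lambda]$ so that Lemma~\ref{lem:falphaest}'s lower bound on $I$ translates into a correctly-signed bound on the integral; this sign analysis, together with the constraint $\alpha\geq 1$ inherent in Lemma~\ref{lem:falphaest}, is what pins down the precise definition of $\kappa_{m_L}$.
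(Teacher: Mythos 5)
Your proposal is essentially the paper's own proof: the same scaled Selberg minorant pair $r(u)=h(u/\lambda)$, $\hat r(\alpha)=\lambda\hat h(\lambda\alpha)$, the same diagonal/near-diagonal/off-diagonal decomposition of the convolution identity, the same split at $\alpha=1/m_L$ with \eqref{eq:falphapc} (the $T^{-2\alpha m_L}\log T$ mass giving the $\lambda$ and the diagonal giving the $-1$), and the same double integration by parts via $I(\xi)$ with Lemma~\ref{lem:falphaest} inserted on $[\kappa_{m_L},1/\lambda]$ where $\hat h''(\lambda\alpha)\geq0$. The only step you gloss --- the contribution from $[1/m_L,\kappa_{m_L}]$, where for $m_L\geq2$ the factor $\hat h''(\lambda\alpha)$ changes sign so ``$I\geq0$'' alone does not make that piece nonnegative --- is treated with exactly the same brevity in the paper, which simply restricts the integral to $[\kappa_{m_L},1/\lambda]$.
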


The upper bounds on $\mu_L$ given in Theorem~\ref{thm:smallgaps} now follow upon
straightforward numerical computation of the first positive value of $\lambda$
for which the right side of the inequality in \eqref{sum} in the theorem becomes positive. 

\begin{proof}[Proof of Theorem~\ref{thm:smallgaps1}]
  Consider the Fourier pair
  \begin{equation}
    h(u)\ = \ \left(\frac{\sin \pi u}{\pi u}\right)^2\left(\frac1{1-x^2}\right) \qquad
    \text{and} \qquad
    \hat h(\alpha)\ = \ \max\left(1-|\alpha|+\frac{\sin 2\pi|\alpha|}{2\pi},0\right).
  \end{equation}
  Here $h(u)$ is the Selberg minorant of the charactaristic function of the interval
  $[-1,1]$ in the class of functions with Fourier transforms with support in
  $[-1,1]$. Take $r(u)=h(u/\lambda)$. Then $r(u)$ is a minorant of the
  characteristic function on $[-\lambda,\lambda]$, and
  $\hat r(\alpha)=\lambda\hat h(\lambda \alpha)$. By~\eqref{eq:falphaconvolution}, this allows us to write
  
  \be\begin{aligned}
    \sum_{0<\gammaL\leq T}\mathfrak{m}_\rho
    +2\!\!\!\!\!\!\!\!\!\!\!\!\!\!\!\!\!\sum_{0<\gammaL-\gammaL'\leq\frac{2\pi\lambda}{m_L\log T}}1\ 
    &\geq\ 
    \sum_{0<\gammaL,\gammaL'\leq T}
    r\left(\left(\gammaL-\gammaL'\right)\frac{m_L\log T}{2\pi}\right)
    w(\gammaL-\gammaL') \\
    &\ = \ \left(\frac{m_LT}{2\pi} \log T\right)
    \int_{-1/\lambda}^{1/\lambda} \hat r(\alpha)F_L(\alpha)\d\alpha.
  \end{aligned}\ee
  We may take
  \be\sum_{0<\gammaL\leq T}\mathfrak{m}_\rho\ \sim\ N(T)\ \sim\ \frac{m_LT}{2\pi}\log T,\ee
  for else the theorem is trivially true. Upon inserting~\eqref{eq:falphapc} in its domain
  of validity and estimating the integral arising from the $m_LT^{-2\alpha m_L}\log T$
  portion trivially, we find
  \be\label{eq:intermediateestimate}
  \sum_{0<\gammaL-\gammaL'\leq\frac{2\pi\lambda}{m_L\log T}}1
  \ = \ \left(\frac12-\vep\right)\frac{m_LT}{2\pi}\log T
  \left(\lambda-1+2\lambda\int_{0}^{1/m_L} \hat h(\lambda\alpha)\alpha\d\alpha
    +2\lambda\int_{1/m_L}^{1/\lambda} \hat h(\lambda\alpha)F_L(\alpha)\d\alpha
  \right).\ee
  Next, still following~\cite{CCLM} and~\cite{goldstontrick}, we define the
  function
  \be I(\xi)\ = \ \int_{1/m_L}^\xi(\xi-\alpha)F_L(\alpha)\d\alpha.\ee
  Note that $I(\xi)$ enjoys the properties
  \be I'(\xi)\ = \ \int_{1/m_L}^\xi F_L(\alpha)\d\alpha\quad\text{and}\quad
  I''(\xi)\ = \ F_L(\xi).\ee Integrating by parts twice and observing that
  $\hat h(1)=\hat h'(1)=0$ allow us to write
  \be\label{eq:intbypartstrick}
  \int_{1/m_L}^{1/\lambda}\hat h(\lambda\alpha)F_L(\alpha)\d\alpha
  \ = \ \int_{1/m_L}^{1/\lambda}\hat h(\lambda\alpha)I''(\alpha)\d\alpha
  \ = \ \lambda^2\int_{1/m_L}^{1/\lambda}\hat h''(\lambda\alpha)I(\alpha)\d\alpha.\ee
  Provided $\alpha\geq0$, $\hat h''(\lambda\alpha)=-2\pi\sin(2\pi\lambda\alpha)$,
  which is non-negative for $1\leq\alpha\leq1/\lambda$ if $1/2\leq\beta\leq1$.
  Moreover, as $F_L(\alpha)$ is positive, Lemma~\ref{lem:falphaest} provides
  a nontrivial bound for
  \be\label{eq:kappadef}\xi\ \geq\ 
  \frac12\left(1+\frac1{m_L^2}+\frac{\sqrt{3-8m_L+6m_L^2+3m_L^4}}{m_L^2\sqrt 3}\right)
  \ := \ \kappa_{m_L}.\ee
  Since $m_L$ is a positive integer, we always have $\kappa_{m_L}>1$.
  Hence, inserting the estimate from Lemma~\ref{lem:falphaest}
  into~\eqref{eq:intbypartstrick}, we find that
  \be\label{eq:tailintest}
  \int_{1/m_L}^{1/\lambda}\hat h(\lambda\alpha)F_L(\alpha)\d\alpha\ 
  \geq\ -2\pi\lambda^2\int_{\kappa_{m_L}}^{1/\lambda}\sin(2\pi\lambda\alpha)
  \left(\frac{\alpha^2}2-\frac\alpha2\left(1+\frac1{m_L^2}\right)
    +\frac1{3m_L^3}+o(1)\right)\d\alpha\ee
  for $1/2\leq\beta\leq1$. Inserting~\eqref{eq:tailintest}
  into~\eqref{eq:intermediateestimate} establishes the theorem.
\end{proof}

\
%%%%%%%%%%%%%%%%% LEM}MATA %%%%%%%%%%%%%%%%%
\section{Lemmata}
In this section we collect lemmata that will be used in the proof of Theorem~\ref{thm:shiftedmoment}.

\begin{lemma}[Convexity bound]\label{lem:convexitybound}
For any $0 <\sigma<1$ and $\varepsilon >0$, there is a uniform bound
\begin{equation}\label{harcoslemma}
  L(\sigma\!+\!it,f)\ \ll_{\sigma,\varepsilon}\
  \mathfrak{q}(\tfrac{1}{2}\!+\!it,f)^{(1-\sigma)/2+\varepsilon},
\end{equation}
where $\mathfrak{q}(s,f)$ denotes the analytic conductor of $L(s,f)$, and the
implied constant depends only on $\sigma$ and $\varepsilon$.
\end{lemma}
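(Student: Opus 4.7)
The plan is to derive the bound by the classical Phragm\'en--Lindel\"of (convexity) argument, interpolating between bounds on the two edges of the critical strip via the functional equation. First I would establish the trivial bound on the line $\Re(s)=1+\varepsilon$: since the Dirichlet series~\eqref{Ldefintion} together with the Ramanujan-on-average bound in~\ref{hyp:coeffsquaredsum} yield absolute convergence there, we get $L(1+\varepsilon+it,f)\ll_\varepsilon 1$, uniformly in $t$.

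Next I would use the asymmetric functional equation~\eqref{eq:funeqasymmetric}, namely $L(s,f)=\varepsilon_f\Phi_f(s)L(1-s,\overline f)$, to transfer this bound across the strip. On the line $\Re(s)=-\varepsilon$ one has $L(-\varepsilon+it,\overline f)\ll_\varepsilon |\Phi_f(-\varepsilon+it)|$. Stirling's formula applied to the archimedean factor in~\eqref{eq:linfty}, i.e.\ to $\Phi_f(s)=L_\infty(1-s,\overline f)/L_\infty(s,f)$ with the explicit shape from~\ref{hyp:funeq}, gives
\begin{equation}
|\Phi_f(-\varepsilon+it)|\ \ll_\varepsilon\ \mathfrak q(\tfrac12+it,f)^{1/2+\varepsilon},
\end{equation}
since $\mathfrak q(s,f)$ is built precisely out of $q$ and the quantities $|s+2\mu_j|$.

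Having the two bounds $L(1+\varepsilon+it,f)\ll1$ and $L(-\varepsilon+it,f)\ll \mathfrak q(\tfrac12+it,f)^{1/2+\varepsilon}$, I would apply the Phragm\'en--Lindel\"of principle to the function $(s-1)^a L(s,f)$ (which is entire and of finite order by~\ref{hyp:entire} and axiom~\ref{selberg:continuation}) on the vertical strip $-\varepsilon\leq\Re(s)\leq1+\varepsilon$. Linear interpolation of the logarithms of the bounds at the two edges, evaluated at the point $\sigma$, produces
\begin{equation}
L(\sigma+it,f)\ \ll_{\sigma,\varepsilon}\ \mathfrak q(\tfrac12+it,f)^{(1-\sigma)/2+\varepsilon}
\end{equation}
for $0<\sigma<1$, which is exactly \eqref{harcoslemma}.

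The only step requiring care is the uniform Stirling estimate for $\Phi_f(s)$: one must track the polynomial-in-$t$ behavior of the gamma ratio so that the resulting bound is expressed cleanly in terms of $\mathfrak q(\tfrac12+it,f)$, and one must absorb the bounded polynomial factors (and the archimedean parameters $\mu_j$, which are stable under complex conjugation by~\ref{hyp:funeq}) into the implicit constant. Once Stirling is made uniform on the line $\Re(s)=-\varepsilon$, Phragm\'en--Lindel\"of finishes the argument mechanically.
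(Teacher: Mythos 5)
Your argument is correct and is essentially the proof the paper has in mind: the paper simply cites Harcos~\cite[Section 1.2]{harcos} and notes that the bound follows from estimates on $L(\sigma+it,f)$ in the half-planes $\sigma>1$ and $\sigma<0$ (the latter via the functional equation and Stirling) interpolated by the Phragm\'en--Lindel\"of convexity principle, which is exactly your route. Aside from a harmless typo (the bound on the line $\Re(s)=-\varepsilon$ should be stated for $L(-\varepsilon+it,f)$, obtained from \eqref{eq:funeqasymmetric} together with $L(1+\varepsilon-it,\overline f)\ll_\varepsilon 1$), your sketch matches the intended proof.
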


\begin{proof}
See~\cite[Section 1.2]{harcos}. The uniform bound is deduced by considering upper bounds on $L(\sigma\!+\!it,f)$ in the half-plane $\sigma >1$ and $\sigma<0$ and then interpolating between the two via the Phragm\'{e}n-Lindel{\"o}f convexity principle.
\end{proof}

We note that while the implied constant in \eqref{harcoslemma} does not depend on the (fixed) form $f$,  this independence is not a necessity for the present work.

\begin{lemma}\label{lem:weightsum}
Let $\alpha,\beta\in\C$ with $|\alpha|,|\beta| \ll 1/\log{T}.$ Then, as $T\to \infty$, we have
\begin{equation}
\sum_{n\geq1}\frac{\left|a_f(n)\right|^2}{n^{1+\alpha+\beta}}e^{-2n/T} \ = \  L(1\!+\!\alpha\!+\!\beta,f\!\times\! \overline{f})
  +c_{f}\Gamma(-\alpha\!-\!\beta)\left(\frac{T}{2}\right)^{-\alpha-\beta}
  +O\left(T^{-1/2}\right),
\end{equation}
and, for $|a| \ll 1/\log T$, we have
%the 'a' in the sum below was not defined, but it appears to be under the usual 1/log(T) size constraint - please verify
\begin{equation}\label{expsumbound}
\sum_{n\geq1}{\frac{|a_f(n)|^2}{n^a}e^{-2n/T}} \ll T
\end{equation}
as $T\to \infty$.
\end{lemma}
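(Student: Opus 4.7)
The plan is to apply Mellin--Barnes inversion to the exponential weight $e^{-2n/T}$, converting the sum into a contour integral of the Rankin--Selberg $L$-function $L(s,f\times\overline{f})$, and then to shift the contour to the left and extract the main terms as residues.

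Starting from the standard identity $e^{-x}=\frac{1}{2\pi i}\int_{(c)}\Gamma(s)\,x^{-s}\d{s}$ for $\Re(x)>0$ and $c>0$, substituting $x=2n/T$ and summing against $|a_f(n)|^2 n^{-1-\alpha-\beta}$ yields, for $c$ sufficiently large that the Dirichlet series converges absolutely,
\begin{equation*}
\sum_{n\geq 1}\frac{|a_f(n)|^2}{n^{1+\alpha+\beta}}e^{-2n/T}
\ = \ \frac{1}{2\pi i}\int_{(c)}\Gamma(s)\left(\frac{T}{2}\right)^{s} L(1+\alpha+\beta+s,\,f\times\overline{f})\d{s}.
\end{equation*}
I would then shift the contour to $\Re(s)=-1/2$. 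Two poles lie in the intervening strip: a simple pole of $\Gamma(s)$ at $s=0$, contributing $L(1+\alpha+\beta,f\times\overline{f})$; and a simple pole of $L(1+\alpha+\beta+s,f\times\overline{f})$ at $s=-(\alpha+\beta)$, guaranteed by property \ref{hyp:convolution}, contributing $c_{f}\,\Gamma(-\alpha-\beta)\,(T/2)^{-\alpha-\beta}$. Together these produce the two main terms of the claim. (The apparent double pole in the limit $\alpha+\beta=0$ is harmless: the singularities of $\Gamma(-\alpha-\beta)$ and of $L(1+\alpha+\beta,f\times\overline{f})$ cancel in their sum, so the identity extends analytically over the full range $|\alpha|,|\beta|\ll 1/\log T$.)

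It then remains to bound the shifted integral on $\Re(s)=-1/2$ by $O(T^{-1/2})$. The factor $(T/2)^{s}$ contributes $(T/2)^{-1/2}$ in absolute value, so the key point is that the integrand in $t=\Im(s)$ is absolutely integrable with a $T$-independent bound. This follows since Stirling's formula yields $|\Gamma(-1/2+it)|\ll (1+|t|)^{-1}e^{-\pi|t|/2}$, while Lemma~\ref{lem:convexitybound}, applied to the degree-$4$ Rankin--Selberg $L$-function $L(s,f\times\overline{f})$, gives only polynomial growth for $L(1/2+\Re(\alpha+\beta)+it,f\times\overline{f})$ in $|t|$. The exponential decay of $\Gamma$ dominates, so the tail integral converges to an $O(1)$ quantity, yielding the desired $O(T^{-1/2})$ error.

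For the secondary bound $\sum_{n\geq1}|a_f(n)|^2 n^{-a} e^{-2n/T}\ll T$, I would avoid contour methods and argue directly by Abel summation, using property \ref{hyp:coeffsquaredsum} that $A(X):=\sum_{n\leq X}|a_f(n)|^2\ll X$. Because $|a|\ll 1/\log T$ implies $n^{-a}\asymp 1$ uniformly for $1\leq n\leq T$, and the exponential weight truncates the tail $n>T$ effectively, a short partial-summation calculation produces the stated estimate. The principal technical obstacle in the proof is the convexity/Stirling estimate on the $\Re(s)=-1/2$ line, where the degree-$4$ polynomial growth of $L(s,f\times\overline{f})$ must be decisively controlled by the exponential decay of $\Gamma$; once this is set up, the residue computation and verification of the two main terms is routine.
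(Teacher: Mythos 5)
Your proposal is correct and follows essentially the same route as the paper: Mellin inversion of the exponential weight, shifting the contour to $\Re(s)=-1/2$ to pick up the residues at $s=0$ and $s=-(\alpha+\beta)$, bounding the shifted integral by the exponential decay of $\Gamma$ against the polynomial growth of $L(s,f\times\overline f)$, and handling \eqref{expsumbound} by partial summation via \ref{hyp:coeffsquaredsum}. Your remark that the apparent singularities at $\alpha+\beta=0$ cancel is a nice point the paper leaves implicit, but it does not change the argument.
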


\begin{proof} The bound in \eqref{expsumbound} follows by partial summation.
%  To obtain the bound in \eqref{expsumbound}, we truncate it at $\tau$, use partial
%  summation, and then let $\tau\ra\infty$.
%  \be\left(\sum_{n\geq1}
%    \left|a_f(n)\right|^2n^{-2\Re(\alpha)}e^{-2n/T}\right)
%  \ll\frac2T\int_1^\infty ue^{-\frac{2u}T}\d u\ = \ \frac12e^{-2/T}(2+T)\ll T.\ee
For the first sum, we begin by writing the main term of the asymptotic expansion using the definition of the convolution sum in property \ref{hyp:convolution}. We have
\begin{align}
  \sum_{n=1}^{\infty} \frac{\left|a_{f}(n)\right|^2}{n^{1+\alpha+\beta}}e^{-2n/T}
  &\ = \  \frac{1}{2 \pi i} \sum_{n= 1}^{\infty}\frac{ \left|a_{f}(n)\right|^2}{n^{1+\alpha+\beta}}
  \int_{(2)} \Gamma(w) \left(\frac{2n}{T}\right)^{-w}\d w \\
  &\ = \ \frac{1}{2 \pi i} \int_{(2)} \Gamma(w) \left(\frac{T}{2}\right)^{w}
  \sum_{n= 1}^{\infty} \frac{\left|a_{f}(n)\right|^2}{n^{1+w+\alpha+\beta}}\d w \\
  &\ = \ \frac{1}{2 \pi i} \int_{(2)} \Gamma(w) \left(\frac{T}{2}\right)^{w}
  L(1\!+\!w\!+\!\alpha\!+\!\beta, f \!\times\! \overline{f})\d w.
\end{align}

As described in property~\ref{hyp:convolution}, we let $c_{f}$ denote the residue of
$L(s, f \!\times\! \overline{f})$ at $s\!=\!1$. Note that
\begin{equation}
\res_{w=-\alpha-\beta}\left[\Gamma(w)
\left(\frac{T}{2}\right)^{w}L(1\!+\!w\!+\!\alpha\!+\!\beta, f\!\times\!\overline{f})\right]
\ = \ c_{f}\Gamma(-\alpha\!-\!\beta)\left(\frac{T}{2}\right)^{-\alpha-\beta},
\end{equation}
and
\begin{equation}
\res_{w=0}\left[\Gamma(w)\left(\frac{T}{2}\right)^{w}
L(1\!+\!w\!+\!\alpha\!+\!\beta,f\!\times\!\overline{f})\right]
\ = \ L(1\!+\!\alpha\!+\!\beta,f\!\times\!\overline{f}).
\end{equation}
Hence, moving the contour of
integration to $\Re(w)=-1/2$ and noting that the contribution from the horizontal sides of the contour is zero by the exponential decay of the gamma factor and the finite order of
$L(s, f \times \overline{f})$ in vertical strips not containing $s=1$,  we have that
\begin{align}
\sum_{n=1}^{\infty} \frac{\left|a_{f}(n)\right|^2}{n^{1+\alpha+\beta}}e^{-2n/T}
  &\ = \
  \begin{aligned}[t]
    &c_{f}\Gamma(-\alpha\!-\!\beta)\left(\frac{T}{2}\right)^{-\alpha-\beta}
      +L(1\!+\!\alpha\!+\!\beta, f \!\times\! \overline{f}) \\
    &+ O\left(\int_{\left(-\frac{1}{2}\right)}\Gamma(w)\left(\frac{T}{2}\right)^{w}
      L(1\!+\!w\!+\!\alpha\!+\!\beta, f \!\times\! \overline{f})\d w \right).
  \end{aligned}
\end{align}
We now estimate the error term. Letting $w=u+iv$ as above, we have
\begin{align}
  \int_{\left(-\frac{1}{2}\right)}\Gamma(w)\left(\frac{T}{2}\right)^{w}
  L(w+1+\alpha+\beta, f \times \overline{f})\d w
  &\ll O\left(T^{-1/2}\right).
\end{align}
This completes the proof.
\end{proof}

\begin{lemma}\label{lem:truncatedsum}
  Let $\alpha,\beta\in\C$ with $|\alpha|,|\beta|\ll 1/{\log T}$. Then, as $T\to \infty$, we have
  \begin{align*}
    \sum_{n \leq T} \left|a_{f}(n) \right|^{2}n^{-1+\alpha+\beta}
    \ = \ L(1-\alpha-\beta, f \times \overline{f})
    +T^{\alpha+\beta}\frac{c_{f}}{\alpha+\beta}
    +O\left(T^{-1/4+o(1)\log T} \right).
  \end{align*}
We also have, for $|a|\ll 1/{\log T}$, that
\begin{equation*}
  \sum_{n \leq T} |a_{f}(n)|^{2}n^{\sigma+a}\ \ll\ 
  \begin{cases}
    T, & \sigma = 0, \\
    T^{3/2}, & \sigma = 1/2,
  \end{cases}
\end{equation*}
as $T\to \infty$.
\end{lemma}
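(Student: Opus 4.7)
The plan is to treat the two assertions separately: the asymptotic expansion by Perron-style contour integration against the Rankin--Selberg convolution $L(s, f\times\overline{f})$, and the remaining bounds by partial summation using property~\ref{hyp:coeffsquaredsum}.

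For the main asymptotic, I would begin with the truncated Perron formula, choosing $c = 1 + 1/\log T$ and a truncation height $Y = T^A$ for some suitably large fixed $A$, to write
\[
\sum_{n\leq T}\frac{|a_f(n)|^2}{n^{1-\alpha-\beta}} \ = \ \frac{1}{2\pi i}\int_{c-iY}^{c+iY} L(s+1-\alpha-\beta,\,f\times\overline{f})\,\frac{T^s}{s}\,ds \;+\; E,
\]
where the truncation error $E$ is controlled by the Ramanujan-on-average bound $\sum_{n\leq X}|a_f(n)|^2 \ll X$ from property~\ref{hyp:coeffsquaredsum}, and is negligible for $A$ sufficiently large. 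I would then shift the contour to $\Re(s) = -1/4$, picking up two simple poles in the rectangle: the pole of $L(s,f\times\overline{f})$ at $s=1$ (property~\ref{hyp:convolution}) contributing the residue $c_f T^{\alpha+\beta}/(\alpha+\beta)$ at $s=\alpha+\beta$, and the pole of $1/s$ at $s=0$ contributing $L(1-\alpha-\beta,f\times\overline{f})$. These give precisely the two advertised main terms.

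The residual vertical integral on $\Re(s)=-1/4$ and the two horizontal segments at heights $\pm Y$ are estimated via the convexity bound for the degree-$4$ $L$-function $L(s,f\times\overline{f})$. Exactly as in Lemma~\ref{lem:convexitybound}, Phragm\'en--Lindel\"of interpolation between absolute convergence for $\Re(s)>1$ and the functional equation for $\Re(s)<0$ yields $L(3/4+o(1)+it,\,f\times\overline{f})\ll (1+|t|)^{1/2+\varepsilon}$, and after dividing by $|s|$ and multiplying by $T^{-1/4}$ this produces an error of the claimed size (a small polynomial saving in $T$ times a polylog). The second estimate follows immediately from property~\ref{hyp:coeffsquaredsum}: since $|a|\ll 1/\log T$, one has $n^{a} = 1+o(1)$ uniformly for $n\leq T$, so partial summation against $\sum_{n\leq X}|a_f(n)|^2\ll X$ yields $\sum_{n\leq T}|a_f(n)|^2 n^{\sigma+a}\ll T^{\sigma+1}$ for $\sigma\in\{0,1/2\}$.

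The main obstacle is balancing the convexity estimate for $L(s,f\times\overline{f})$ against the choice of truncation height $Y$: the vertical integral on $\Re(s)=-1/4$ grows like $T^{-1/4}Y^{1/2+\varepsilon}$, while the Perron truncation error shrinks like a power of $T/Y$, so one must choose $A$ (equivalently the contour-shift depth, which could be taken smaller than $1/4$ if needed) so that neither term dominates the stated error. All the analytic input required on $L(s,f\times\overline{f})$ (finite order in vertical strips, functional equation, single pole at $s=1$) is classical via M\oe glin--Waldspurger, as already invoked in property~\ref{hyp:convolution}, so the only real work is the careful bookkeeping of these estimates.
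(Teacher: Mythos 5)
Your overall route is the same as the paper's (truncated Perron against $L(s,f\times\overline f)$, shift left, collect the residues at $s=0$ and $s=\alpha+\beta$, bound the leftover contour with the degree-$4$ convexity bound from Lemma~\ref{lem:convexitybound}, and prove the second assertion by partial summation from~\ref{hyp:coeffsquaredsum}), but the quantitative bookkeeping you defer to the end does not close, and this is a genuine gap. You place the Perron contour at $c=1+1/\log T$, yet the series being integrated, $\sum_n |a_f(n)|^2 n^{-1+\alpha+\beta-s}$, is absolutely convergent already for $\Re(s)>\Re(\alpha+\beta)\approx 0$; sitting a full unit to the right makes the truncation error genuinely of size $T^{c}/Y$ up to logarithms (the $n=1$ term alone contributes $\asymp T^{c}/(Y\log T)$), so ``negligible for $A$ large'' forces $Y\gg T^{5/4}$. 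But then the vertical integral on $\Re(s)=-\delta$ is, by the very convexity bound you quote, of size about $T^{-\delta}Y^{2\delta+\varepsilon}\geq T^{3\delta/2}$, polynomially large for every $\delta>0$ (and still not $\ll T^{-1/4}$ as $\delta\to0$). Your proposed remedy of taking the shift depth \emph{smaller} than $1/4$ goes in the wrong direction: with $c=1+1/\log T$ no choice of $Y$ and $\delta$ brings the total error below a positive power of $T$, let alone to the stated $O\bigl(T^{-1/4+o(1)}\log T\bigr)$.

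The fix is exactly what the paper does. Take the Perron abscissa just to the right of the abscissa of absolute convergence, $\kappa=\Re(\alpha+\beta)+1/\log T$, using the effective Perron formula of Lemma~\ref{lem:perron}; then the truncation error is $\ll T^{o(1)}(\log T)/\gamma$ plus a near-diagonal block, and the correct balance is to shift to $\Re(w)=-1/2$ (deeper, not shallower, than your $-1/4$) with truncation height $\gamma=T^{1/4}$, which makes both error sources of size $T^{-1/4+o(1)}\log T$. Even after correcting $c$, your depth $-1/4$ caps out at $T^{-1/6+o(1)}$ (optimal $Y=T^{1/6}$), short of the stated bound. Note also that the near-diagonal Perron terms are not controlled by the average bound $\sum_{n\leq X}|a_f(n)|^2\ll X$ alone --- that only yields $O(1)$ for the block $|n-T|\leq T/\gamma$ --- which is why the paper feeds the pointwise bound $|a_f(n)|^2\leq \tau(n)^2$ into the function $B$ of Lemma~\ref{lem:perron}. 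Your treatment of the second assertion, partial summation from~\ref{hyp:coeffsquaredsum}, agrees with the paper and is fine.
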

\begin{proof}
  The second bound is immediate from routine summation by parts. We have
  \be\begin{aligned}[b]
    \sum_{n \leq T} |a_{f}(n)|^{2}n^{\sigma+a}
    &\ = \ T^{\sigma+a}\sum_{n \leq T}|a_{f}(n)|^{2}
    -(\sigma+a)\int^T_1u^{\sigma+a-1}\left(\sum_{n \leq u} |a_{f}(u)|^{2}\right)\d u \\
    &\ll T^{1+\sigma+a}-(\sigma+a)\int^{T}_{1}u^{\sigma+a}\d u
    \ll T^{1+\sigma+a},
  \end{aligned}\ee
  where here we have made use of ~\ref{hyp:coeffsquaredsum}. %\ref{hyp:convolution}.

  As for the first sum, in view of ~\ref{hyp:convolution}, an application of
  Perron's formula (see \ref{lem:perron}) yields
\begin{align}\label{eq:perronapp}
  \sum_{n \leq T} \left|a_{f}(n) \right|^{2}n^{-1+\alpha+\beta}
  &\ = \ \begin{aligned}[t]
    &\frac{1}{2 \pi i} \int^{\kappa+i\gamma}_{\kappa-i\gamma}
    L(1-\alpha-\beta+w, f \times \overline{f})T^{w}\frac{\d w}{w} \\
    &+O\left(T^{\Re(\alpha+\beta)}\frac{\log(T)}{\gamma}
      +\frac{\tau(2T)^{2}}{T^{1-\Re(\alpha+\beta)}}
      \left(1+T\frac{\log \gamma}{\gamma}\right)\right),
  \end{aligned}
\end{align}
where $\kappa=\Re(\alpha+\beta)+1/\log(T)$.
Moving the line of integration to $-1/2$, we pick up
residues from the simple poles at $w=0$ and $w=\alpha+\beta$. We have
\be
\res_{w=0}L(1-\alpha-\beta+w, f \times \overline{f}) \frac{T^{w}}{w}
\ = \ L(1-\alpha-\beta, f \times \overline{f}),
\ee
and
\be
\res_{w=\alpha+\beta}L(1-\alpha-\beta+w, f \times \overline{f})\frac{T^w}{w}
\ = \ T^{\alpha+\beta}\frac{c_{f}}{\alpha+\beta}.
\ee
We now estimate the error from closing the contour of integration. This is
done by applying Lemma~\ref{lem:convexitybound} for
$L(s, f\times\overline{f})$ in the critical strip $0<\sigma<1$.
From this we have, fixing $\vep>0$,
\be
L(\sigma +it, f \times \overline{f})
\ll_{\vep, \sigma} t^{2(1-\sigma)+\vep}
\ee
for $0<\sigma<1$. Finally, we know that $L(s+it, f\times\overline{f}) \ll 1$
for $\Re(s)>1$ since the $L$-function is given by an absolutely
convergent Dirichlet series.
We proceed to estimate the error from the contour itself. We have
\begin{align}\label{eq:s2vertcont}
  &\begin{aligned}[b]
  \frac{1}{2 \pi i} \int^{-1/2+i\gamma}_{-1/2-i\gamma}
  L(1-\alpha-\beta+w, f \times \overline{f})T^{w}\frac{\d w}{w}
  &\ll \int^{\gamma}_{-\gamma}
  v^{1+2\Re(\alpha+\beta)+2\vep}T^{-1/2}\frac{\d v}{-\frac{1}{2}+|v|} \\
  &\ll\gamma^{1+2\Re(\alpha+\beta)+2\vep}T^{-1/2}\log\gamma,
  \end{aligned} \\\intertext{and}
  &\frac{1}{2 \pi i} \int^{\kappa\pm i\gamma}_{-1/2 \pm i\gamma}
  L(1-\alpha-\beta+w, f \times \overline{f})T^{w}\frac{\d w}{w} \notag\\
  &\hspace{.25in}\ll\ \int^{\pm i\gamma}_{-1/2 \pm i\gamma}
  L(1-\alpha-\beta+w, f \times \overline{f})T^{w}\frac{\d w}{w}
  +\int^{\kappa\pm i\gamma}_{\pm i\gamma}L(1-\alpha-\beta+w,f\times\overline{f})
  T^{w}\frac{\d w}{w} \notag\\
  &\hspace{.25in}\ll\ \gamma^{-1+2\Re(\alpha+\beta)+2\vep}
  +\kappa T^{\kappa}\gamma^{-1+2\Re(\alpha+\beta)+2\vep} \notag\\
  &\hspace{.25in}\ll\ \gamma^{-1+2\Re(\alpha+\beta)+2\vep}+\left(\Re(\alpha+\beta)
    +\frac{1}{\log T} \right)\gamma^{-1+2\Re(\alpha+\beta)+2\vep}
  T^{\Re(\alpha+\beta)+1/\log T}.
  \label{eq:s2horcont}
\end{align}
Put $\gamma=T^{1/4}$. Then we find that the right-hand side of \eqref{eq:s2vertcont} is
\begin{equation}
  \ll T^{-1/4+\Re(\alpha+\beta)/2+\vep/2}\log T
  \ll T^{-1/4+\vep/2\log T}
 \end{equation}
 and that the right-hand side of \eqref{eq:s2horcont} is
 \begin{equation}
   \ll T^{-1/4+\Re(\alpha+\beta)/2+\vep/2}
  +\left(\Re(\alpha+\beta)+\frac{1}{\log T} \right)
  T^{-1/4+3\Re(\alpha+\beta)/2+1/\log T+\vep/2}
  \ll T^{-1/4 +\vep/2}.
\end{equation}
Finally, we estimate the error term in~\eqref{eq:perronapp}. We have
\be\begin{aligned}[b]
  T^{\Re(\alpha+\beta)}\frac{\log(T)}{\gamma}
  +\frac{\tau(2T)^{2}}{T^{1-\Re(\alpha+\beta)}}
  \left(1+T\frac{\log \gamma}{\gamma}\right)
  &\ll T^{-1/4 +\Re(\alpha+\beta)}\log T+\frac{(2T)^{o(1)}}{T}
  \left(1+T^{1-1/4}\log T \right) \\
  &\ll\log T\left(T^{-1/4 +\Re(\alpha+\beta)}+T^{-1/4 +o(1)}\right) \\
  &\ll T^{-1/4+o(1)} \log T.
\end{aligned}\ee
This completes the proof.
\end{proof}

\begin{lemma}\label{lem:errorsum}
  Let $a\in\C$ with $|a|\ll1/\log T$ and $X\sim T$. Then
  \be
    \sum_{n\leq X}\left|a_f(n)\right|^2n^{-3/4+a}e^{-n/X}
    \ll X^{1/4}.
  \ee
\end{lemma}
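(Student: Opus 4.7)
The plan is to reduce this bound to Hypothesis~\ref{hyp:coeffsquaredsum}, namely $\sum_{n\leq Y}|a_f(n)|^2 \ll Y$, by a dyadic decomposition of the range of summation (partial summation would work equally well, but the dyadic argument is cleaner).

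First I would dispose of the two factors $n^a$ and $e^{-n/X}$. Since $|a| \ll 1/\log T$ and $n \leq X \asymp T$, we have $|n^a| = n^{\Re(a)} = \exp(\Re(a)\log n) \ll 1$ uniformly for $n\leq X$, so $|n^{-3/4+a}| \ll n^{-3/4}$; and $e^{-n/X} \leq 1$ may be discarded for the upper bound. Thus it suffices to show $\sum_{n\leq X}|a_f(n)|^2 n^{-3/4} \ll X^{1/4}$.

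Next I would split the range $[1,X]$ dyadically into intervals $I_k = (X2^{-k-1}, X2^{-k}]$ with $0 \leq k \ll \log X$, and write $M_k := X 2^{-k}$. On each $I_k$, $n^{-3/4} \asymp M_k^{-3/4}$, so by Hypothesis~\ref{hyp:coeffsquaredsum},
\begin{equation*}
\sum_{n\in I_k}|a_f(n)|^2 n^{-3/4} \ll M_k^{-3/4}\sum_{n\leq M_k}|a_f(n)|^2 \ll M_k^{-3/4}\cdot M_k = M_k^{1/4}.
\end{equation*}
Summing the geometric series in $k$ then gives
\begin{equation*}
\sum_{n\leq X}|a_f(n)|^2 n^{-3/4+a}e^{-n/X} \ll \sum_{k\geq 0}(X2^{-k})^{1/4} = X^{1/4}\sum_{k\geq 0}2^{-k/4} \ll X^{1/4},
\end{equation*}
which is the claimed bound.

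There is really no serious obstacle here: the only substantive input is Hypothesis~\ref{hyp:coeffsquaredsum}, and the weight $n^{-3/4}$ makes the geometric series converge at the heavy (large-$n$) end. The only bookkeeping to be careful about is the uniformity of the bound $n^a = O(1)$ across the whole range, which follows immediately from $|a|\ll 1/\log T$ and $n \leq X \asymp T$.
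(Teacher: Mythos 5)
Your argument is correct, and it rests on exactly the same substantive input as the paper's proof, namely property~\ref{hyp:coeffsquaredsum}, but the mechanics differ. The paper keeps the weight $n^{-3/4+a}e^{-n/X}$ intact and performs summation by parts against the partial sums $\sum_{n\leq u}|a_f(n)|^2 \ll u$, which reduces the lemma to the integral estimates $\int_1^X u^{-3/4}e^{-u/X}\d u \ll X^{1/4}$ and $\int_1^X u^{1/4}e^{-u/X}\d u \ll X^{5/4}$, handled by a change of variables; this keeps the computation in the same style as the neighboring Lemma~\ref{lem:truncatedsum}. You instead discard the two harmless factors at the outset --- $e^{-n/X}\leq 1$ and $n^{\Re(a)}\ll 1$ uniformly for $n\leq X\sim T$ since $|\Re(a)|\log n \ll (\log X)/\log T \ll 1$ --- and then sum dyadic blocks $(X2^{-k-1},X2^{-k}]$, where property~\ref{hyp:coeffsquaredsum} gives $\ll (X2^{-k})^{1/4}$ per block and the geometric series converges. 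Your route avoids the integral manipulations entirely and is a bit cleaner for this particular upper bound; the paper's partial-summation route is the one that would also produce asymptotics or handle weights where the exponential cutoff genuinely matters, but for the stated $\ll X^{1/4}$ bound the two are equally valid. One minor point worth making explicit: for the small dyadic blocks (bounded $M_k$) the bound $\sum_{n\leq M_k}|a_f(n)|^2\ll M_k$ still holds trivially, since property~\ref{hyp:coeffsquaredsum} is stated only for sufficiently large $X$; this costs nothing but should be said.
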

\begin{proof}
We use property~\ref{hyp:coeffsquaredsum} to write
\be\begin{aligned}[b]
  &\sum_{n\leq X}\left|a_f(n)\right|^2n^{-3/4+a}e^{-n/X} \\
  &\ll\left(\sum_{n\leq X}\left|a_f(n)\right|^2\right)X^{-3/4}e^{-1}
  +\int_1^X\left(\sum_{n\leq u}\left|a_f(u)\right|^2\right)
  u^{-7/4}e^{-u/X}\d u
  +\frac1 X\int_1^X\left(\sum_{n\leq u}\left|a_f(u)\right|^2\right)u^{-3/4}
  e^{-u/X}\d u \\
  &\ll X^{1/4}+\int_1^Xu^{-3/4}e^{-u/X}\d u
  +\frac1 X\int_1^Xu^{1/4}e^{-u/X}\d u.
\end{aligned}\ee
Changing variables twice yields
\be\int_1^Xu^{-3/4}e^{-u/X}\d u \ = \
X^{1/4}\int_{1/X}^1 u^{-3/4}e^{-u}\d u
= X^{1/4}\int_1^X u^{-5/4}e^{-1/u}\d u\ll X^{1/4},\ee
and, similarly, $\int_1^X u^{1/4}e^{-u/X}\d u\ll X^{5/4}$.
\end{proof}

\begin{lemma}[Stirling estimate]\label{lem:stirling}
  Let $L(s,f)$ be of degree $m$, $L_1,L_2\in\{L,\overline L\}$, choose
  $\alpha,\beta\in\C$, and take notation as in~(\ref{eq:funeqasymmetric}).
  As $t\to\infty$,
  \be
  \Phi_{L_1}(\tfrac{1}{2}\!+\!\alpha\!+\!it)
  \Phi_{L_2}(\tfrac{1}{2}\!+\!\beta\!-\!it)
  \ = \
  Q^{-2(\alpha+\beta)}\left(\frac{t}2\right)^{-m(\alpha+\beta)}
  \left(1+O\left(\frac1{t}\right)\right)
  \ee
  and, for some fixed complex number $a$, there exists a complex number $A$
  independent of $t$ (though dependent on $a$) such that, as $t\ra\infty$,
  \be \Phi_L(a-it)
  \ = \ AQ^{1-2(a-it)}e^{-i m \,t}\left(\frac t2\right)^{m(1/2-a+it)}
  \left(1+O\left(t^{-1}\right)\right) \ll t^{m(1/2-a)}. \ee
\end{lemma}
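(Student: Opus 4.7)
Both assertions are direct consequences of Stirling's formula applied to the Gamma factors in
\begin{equation*}
\Phi_L(s) \ = \ \frac{L_\infty(1-s,\overline L)}{L_\infty(s,L)} \ = \ Q^{1-2s}\prod_{j=1}^r \frac{\Gamma(w_j(1-s)+\overline{\mu_j})}{\Gamma(w_js+\mu_j)}.
\end{equation*}
By hypothesis~\ref{hyp:funeq}, $w_j = 1/2$ for each $j$ (so $r = m$) and the set $\{\mu_j\}$ is stable under complex conjugation; stability implies $L_\infty(s,\overline L) = L_\infty(s,L)$, hence $\Phi_{\overline L} = \Phi_L$, so the two choices $L_1,L_2 \in \{L,\overline L\}$ in the first assertion are handled uniformly. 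The key asymptotic I will deploy is
\begin{equation*}
\log\Gamma(c \pm iw_jt) \ = \ (c-\tfrac12)\log(w_jt) \pm iw_jt\log(w_jt) - \tfrac{w_jt\pi}{2} \mp iw_jt \pm i(c-\tfrac12)\tfrac{\pi}{2} + \tfrac12\log(2\pi) + O(1/t),
\end{equation*}
valid as $t \to \infty$ for any fixed $c \in \C$, which follows from Stirling combined with the refined expansion $\log(c \pm iw_jt) = \log(w_jt) \pm i\pi/2 \mp ic/(w_jt) + O(1/t^2)$.

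For the first identity, I write $\Phi_{L_1}(\tfrac12+\alpha+it)\Phi_{L_2}(\tfrac12+\beta-it)$ as $Q^{-2(\alpha+\beta)}$ times a product over $j$ of four Gamma factors: the $\alpha$-ratio has $\Gamma(w_j(\tfrac12-\alpha)+\overline{\mu_j} - iw_jt)$ over $\Gamma(w_j(\tfrac12+\alpha)+\mu_j + iw_jt)$, while the $\beta$-ratio has the $\pm it$ signs reversed. On applying the expansion, the $\pm iw_jt\log(w_jt)$, $w_jt\pi/2$, and $\mp iw_jt$ contributions cancel pairwise within each $j$ between the $\alpha$ and $\beta$ ratios. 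What survives as the coefficient of $\log(w_jt)$ is $-2w_j(\alpha+\beta)$ plus purely imaginary $\{\mu_j\}$-dependent pieces that vanish on summing over $j$ by conjugation-stability; and the $i(c-\tfrac12)\pi/2$ pieces likewise sum to zero. Using $w_j = 1/2$ and $m = 2\sum_j w_j$, the surviving main term is $-m(\alpha+\beta)\log(t/2)$, so exponentiation against the $Q^{-2(\alpha+\beta)}$ prefactor yields the claim.

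For the second identity, applying the same expansion to each factor in
\begin{equation*}
\Phi_L(a-it) \ = \ Q^{1-2(a-it)}\prod_j \frac{\Gamma(w_j(1-a)+\overline{\mu_j} + iw_jt)}{\Gamma(w_ja+\mu_j - iw_jt)}
\end{equation*}
now produces \emph{additive} rather than cancelling phases, since numerator and denominator carry opposite $\pm it$ signs. Summing the $\pm iw_jt\log(w_jt)$ and $\mp iw_jt$ contributions over $j$ (with $\sum_j 2w_j = m$) produces $imt\log t - imt$, while the $(c-d)\log(w_jt)$ terms give $m(1/2-a)\log t$; combining these with the constant $-(m/2)\log 2$ coming from $\sum_j w_j\log w_j = -(m/2)\log 2$ (and its $2it$-multiple) lets one repackage everything as the single factor $(t/2)^{m(1/2-a+it)}e^{-imt}$. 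Every remaining $t$-independent term (from the $\pm i(c-\tfrac12)\pi/2$ pieces and from $a$-dependent constants) coalesces into a single complex number $A$ depending on $a$ and $\{\mu_j\}$ but not on $t$. The bound $\ll t^{m(1/2-a)}$ is then immediate on taking absolute values, since $|e^{-imt}| = 1 = |t^{imt}|$.

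The main obstacle is the careful bookkeeping, in particular the need to retain the $\mp ic/(w_jt)$ correction to $\log(c\pm iw_jt)$: this produces a hidden $c$-dependent constant contribution to $\log\Gamma$ that is easy to miss in a naive Stirling computation, and is essential for correctly identifying both the $\log 2$ piece that converts $\log t$ to $\log(t/2)$ in the first identity and the multiplicative constant $A$ in the second. Once this refinement is in hand, the remainder of the proof is purely algebraic cancellation.
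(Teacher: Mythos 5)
Your argument is correct and rests on the same engine as the paper's, namely Stirling asymptotics for the Gamma factors of $\Phi_L(s)=Q^{1-2s}\prod_{j}\Gamma\left(w_j(1-s)+\overline{\mu_j}\right)/\Gamma\left(w_js+\mu_j\right)$ together with the two facts from \ref{hyp:funeq} that $w_j=1/2$ and $\sum_j\Im(\mu_j)=0$; the difference lies in how the quotient is organized. The paper first applies the reflection formula, turning each reciprocal Gamma into a Gamma times a sine, and for the first asymptotic routes through the identity $\Phi_f(s)=1/\Phi_{\overline f}(1-s)$, so its intermediate expression is a quotient of four Gamma factors per $j$ multiplied by a quotient of sines, which is then collapsed using the ratio asymptotic $\Gamma(z+u)/\Gamma(z+v)\sim z^{u-v}$. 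You work additively instead: you expand the logarithm of each of the four Gamma factors per $j$ with the refined Stirling formula and let the oscillatory terms $\pm iw_jt\log(w_jt)$, $-\pi w_jt/2$, $\mp iw_jt$ cancel, so no sine factors or exponentially growing pieces ever enter and the $1+O(1/t)$ error is visibly uniform; you also use conjugation-stability of $\{\mu_j\}$ to observe $\Phi_{\overline L}=\Phi_L$, which disposes of all four choices of $(L_1,L_2)$ at once, where the paper simply asserts the remaining cases are identical. Two small remarks: the $\mp ic/(w_jt)$ refinement does not in the end leave a hidden $c$-dependent constant in $\log\Gamma(c\pm iw_jt)$ --- the $+c$ cross term it creates cancels against the real part of $-z$ --- although your displayed expansion is exactly right and the refinement is indeed needed to see this cancellation; and, as in the paper's own statement, the final bound $\ll t^{m(1/2-a)}$ should be read with $\Re(a)$ in the exponent when $a$ is complex.
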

\begin{proof}We have
  \be\begin{aligned}[b]
    \Phi_L(s)\ = \ \frac{\overline{L_\infty}(1-s,f)}{L_\infty(s,f)}
    &\ = \ Q^{1-2s}\prod_{j=1}^d\frac{\Gamma\left(\tfrac12(1-s)+\overline{\mu_j}\right)}
    {\Gamma\left(\tfrac12s+\mu_j\right)} \\
    &\ = \ Q^{1-2s}\pi^{-d}\prod_{j=1}^d
    \Gamma\left(\tfrac12(1-s)+\overline{\mu_j}\right)
    \Gamma\left(1-\frac s 2-\mu_j\right)
    \sin\left(\pi\left(\frac s 2 +\mu_j\right)\right).
  \end{aligned}\ee
  Let $a,b\in \C$ and $L_1,L_2=L$ (the other case is
  established in the same way). Then
  \be\begin{aligned}[b]
    &\Phi_L(a+it)\Phi_L(b-it)\ = \ \frac{\Phi_L(b-it)}{\Phi_{\overline L}(1-a-it)} \\
    &\ = \ Q^{2(1-a-b)}
    \prod_{j=1}^m
    \frac{
      \Gamma\left(\frac12-\frac b2+\frac{it}2+\overline{\mu_j}\right)
      \Gamma\left(1-\frac b2+\frac{it}2-\mu_j\right)
    }{
      \Gamma\left(\frac a 2+\frac{it}2 + \mu_j\right)
      \Gamma\left(\frac12+\frac a 2+\frac{it}2 - \overline{\mu_j}\right)}
    \frac{
      e^{i\pi\left(\frac {b-it} 2 +\mu_j\right)}-
      e^{-i\pi\left(\frac {b-it} 2 +\mu_j\right)}
    }{
      e^{i\pi\left(\frac {1-a-it} 2 + \overline{\mu_j}\right)}-
      e^{-i\pi\left(\frac {1-a-it} 2 + \overline{\mu_j}\right)}
    } \\
  &\ = \ Q^{2(1-a-b)}
  \prod_{j=1}^m
  \left(\frac t2\right)^{1-a-b+2(-\mu_j+\overline{\mu_j})}
  \left(1+O\left(t^{-1}\right)\right).
  \end{aligned}\ee
  Recalling that $\sum_j\Im(\mu_j)=0$ (by assumption) completes the first part
  of the proof.

  The bound $\Phi_L(a-it) \ll t^{m(1/2-a)}$ follows in exactly the same way. We have
  \be\begin{aligned}[b]
    \Phi_L(a-it) &\ = \
    Q^{1-2(a-it)}\pi^{-m}\prod_{j=1}^m
    \Gamma\left(\frac12(1-a+it)+\overline{\mu_j}\right)
    \Gamma\left(1-\frac {a-it} 2-\mu_j\right)
    \sin\left(\pi\left(\frac {a-it} 2 +\mu_j\right)\right) \\
    &\ = \ Q^{1-2(a-it)}\pi^{-m}\prod_{j=1}^m
    \left(\frac{it}2\right)^{1/2-a+it-2\Im(\mu_j)}
    \exp\left(-\frac32+a-it+2\Im(\mu_j)\right)
    A_1e^{\pi t/2}\left(1+O\left(t^{-1}\right)\right) \\
    &\ = \ AQ^{1-2(a-it)}e^{-im\,t}\left(\frac t2\right)^{m(1/2-a+it)}
    \left(1+O\left(t^{-1}\right)\right),
  \end{aligned}\ee
as desired.
\end{proof}

\begin{lemma}\label{lem:GeneralMV}
  Let $\{a_n\}$ and $\{b_n\}$ be two sequences of complex numbers. For any real
  numbers $T$ and $H$, we have
  \be \int_T^{T+H}\left|\sum_{n\geq1} a_n n^{it}\right|^2\d t
  \ = \  H\sum_{n\geq1}\left|a_n\right|^2
  +O\left(\sum_{n\geq1}n\left|a_n\right|^2\right) \ee
  and
  \be \int_T^{T+H}\left(\sum_{n\geq1} a_n n^{-it}\right)
  \overline{\left(\sum_{n\geq1}b_nn^{-it}\right)}\d t
  \ = \  H\sum_{n\geq1}a_n\overline b_n
  +O\left(\left(\sum_{n\geq1} n\left|a_n\right|^2\right)^{\frac12}
    \left(\sum_{n\geq1}n\left|b_n\right|^2\right)^{\frac12}\right). \ee
\end{lemma}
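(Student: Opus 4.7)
The plan is to prove both identities by a direct expansion of the Dirichlet polynomials followed by termwise integration, separating the diagonal contribution (which yields the main term) from the off-diagonal contribution (which forms the error); the off-diagonal sum is then controlled by the Hilbert-type inequality of Montgomery and Vaughan.

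For the first identity, I would expand the modulus squared as
\begin{equation*}
\left|\sum_{n\geq1} a_n n^{it}\right|^2 \ = \ \sum_{n,m\geq1} a_n \overline{a_m} \left(\frac{n}{m}\right)^{it},
\end{equation*}
and integrate termwise over $[T,T+H]$. The diagonal $n=m$ produces the main term $H\sum_{n\geq1}|a_n|^2$, while the off-diagonal terms equal
\begin{equation*}
\sum_{n\neq m}a_n\overline{a_m}\cdot\frac{(n/m)^{i(T+H)} - (n/m)^{iT}}{i\log(n/m)},
\end{equation*}
whose modulus is at most $2\sum_{n\neq m}|a_n||a_m|/|\log(n/m)|$. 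Applying the Montgomery--Vaughan Hilbert-type inequality with $\lambda_n = \log n$, so that $\delta_n = \min_{m\neq n}|\log n - \log m| \asymp 1/n$, bounds the latter sum by a constant multiple of $\sum_{n\geq1} n|a_n|^2$, yielding the first formula.

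The second identity follows by the same argument applied to the bilinear integrand $\sum_{n,m} a_n \overline{b_m} (m/n)^{it}$. The diagonal yields $H\sum_{n\geq1}a_n\overline{b_n}$, and the off-diagonal is controlled using the bilinear form
\begin{equation*}
\left|\sum_{m\neq n}\frac{a_n\overline{b_m}}{\log(n/m)}\right| \ \ll\ \left(\sum_{n\geq1}n|a_n|^2\right)^{1/2}\left(\sum_{m\geq1}m|b_m|^2\right)^{1/2}
\end{equation*}
of the Montgomery--Vaughan inequality, which immediately produces the Cauchy--Schwarz-type error term appearing in the statement. Alternatively, the second identity can be deduced from the first by the polarization identity, after which Cauchy--Schwarz on the resulting error yields the geometric-mean form.

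The principal substantive ingredient is the Hilbert-type inequality of Montgomery and Vaughan itself, which is a standard external input cited from their 1974 paper rather than reproved here. With that inequality in hand, the remaining steps are routine bookkeeping from the explicit evaluation of $\int_T^{T+H}(n/m)^{it}\,dt$ as a geometric series; no obstacle beyond this arises, and the bounds are uniform enough for the Ramachandra-style application in the next section.
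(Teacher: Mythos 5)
The paper itself does not prove this lemma at all --- it simply cites Tsang [Lemma~1] --- so your plan of reproving it from the Montgomery--Vaughan Hilbert-type inequality is legitimate in principle (it is essentially Tsang's own argument). However, your treatment of the off-diagonal terms in the first identity contains a genuine error. You bound the off-diagonal contribution in modulus by $2\sum_{n\neq m}|a_n||a_m|/|\log(n/m)|$ and then assert that the Montgomery--Vaughan inequality bounds \emph{this} sum by $O\bigl(\sum_n n|a_n|^2\bigr)$. It does not: the generalized Hilbert inequality controls the \emph{signed} bilinear form $\sum_{n\neq m}u_n\overline{v_m}/(\lambda_n-\lambda_m)$, and the cancellation forced by the antisymmetric kernel is essential. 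The absolute-value majorant can genuinely be larger: taking $a_n=1$ for $N\leq n\leq 2N$ and $a_n=0$ otherwise, one has $|\log(n/m)|\asymp|n-m|/N$ on the support, so $\sum_{n\neq m}|a_n||a_m|/|\log(n/m)|\asymp N^2\log N$, whereas $\sum_n n|a_n|^2\asymp N^2$. So the step as written is false and would lose a factor of $\log$; in the application to $S_1$ in Section~\ref{sec:shiftproof} this turns an $O(T)$ error into $O(T\log T)$, which is the size of the main term and would ruin Theorem~\ref{thm:shiftedmoment}.

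The repair is standard and small: do not take absolute values inside. After evaluating $\int_T^{T+H}(n/m)^{it}\,\mathrm{d}t=\bigl((n/m)^{i(T+H)}-(n/m)^{iT}\bigr)/\bigl(i\log(n/m)\bigr)$, split the off-diagonal contribution into the two endpoint sums $\sum_{n\neq m}c_n\overline{c_m}/\log(n/m)$ with $c_n=a_n n^{i(T+H)}$ and $c_n=a_n n^{iT}$ respectively; since $|c_n|=|a_n|$ and $\delta_n=\min_{m\neq n}|\log n-\log m|\gg 1/n$, Montgomery--Vaughan applied to each signed sum gives $O\bigl(\sum_n n|a_n|^2\bigr)$. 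The same caveat applies to your bilinear case: the displayed bilinear inequality is the right tool, but it must be applied to the phase-twisted coefficients $a_n n^{it_0}$, $b_m m^{it_0}$ at $t_0=T$ and $t_0=T+H$, not to an absolute-value majorant. (One should also remark that the identities are vacuous unless $\sum_n n|a_n|^2$ and $\sum_n n|b_n|^2$ converge, which justifies the termwise integration.)
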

\begin{proof}
This is a generalized form of Montgomery and Vaughan's large sieve proved in \cite[Lemma 1]{tsang}.
\end{proof}
\begin{lemma}\label{lem:parts}
  Let $\{a_n\}$ and $\{b_n\}$ be sequences of complex numbers. Let $T_1$ and
  $T_2$ be positive real numbers and $g(t)$ be a real-valued function that is
  continuously differentiable on the interval $\left[T_1,T_2\right]$. Then
  \begin{multline}
    \int_{T_1}^{T_2}g(t)\left(\sum_{n\geq1} a_n n^{-it}\right)
    \left(\sum_{n\geq1} b_n n^{it}\right)\d t
    \ = \  \\ \ = \
    \left(\int_{T_1}^{T_2} g(t)\d t\right)
    \sum_{n\geq1} a_n b_n
    +O\left(\left[\left|g\left(T_2\right)\right|
        +\int_{T_1}^{T_2}\left|g'(t)\right|\d t\right]
      \left[\sum_{n\geq1} n\left|a_n\right|^2\right]^{\frac12}
      \left[\sum_{n\geq1} n\left|b_n\right|^2\right]^{\frac12}\right).
  \end{multline}
\end{lemma}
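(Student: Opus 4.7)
The plan is to isolate the diagonal contribution of the product $\left(\sum a_n n^{-it}\right)\left(\sum b_n n^{it}\right)$ and handle the off-diagonal part by integration by parts, using Lemma~\ref{lem:GeneralMV} to control an antiderivative uniformly. Writing $D_1(t)=\sum_{n\geq1} a_n n^{-it}$ and $D_2(t)=\sum_{n\geq1} b_n n^{it}$, I would first write
\[
D_1(t)D_2(t)\ =\ \sum_{n\geq1} a_n b_n\ +\ E(t), \qquad E(t)\ :=\ \sum_{\substack{m,n\geq1\\ m\neq n}} a_n b_m\left(\tfrac{m}{n}\right)^{it}.
\]
Substituting this decomposition into the left-hand side and integrating the constant term against $g(t)$ produces the claimed main term $\left(\int_{T_1}^{T_2} g(t)\,dt\right)\sum_n a_n b_n$, and reduces the lemma to bounding $\int_{T_1}^{T_2} g(t) E(t)\,dt$ by the asserted error.

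Next, I would introduce the antiderivative
\[
\tilde E(t)\ :=\ \int_{T_1}^t E(u)\,du\ =\ \int_{T_1}^t D_1(u)D_2(u)\,du\ -\ (t-T_1)\sum_{n\geq1} a_n b_n,
\]
which satisfies $\tilde E(T_1)=0$. The key observation is that Lemma~\ref{lem:GeneralMV}, applied with the second sequence taken to be $\{\overline{b_n}\}$ (so that $\overline{\sum \overline{b_n}\, n^{-it}}=\sum b_n n^{it}$), gives, uniformly in $t\in[T_1,T_2]$,
\[
\tilde E(t)\ \ll\ \left(\sum_{n\geq1} n|a_n|^2\right)^{1/2}\left(\sum_{n\geq1} n|b_n|^2\right)^{1/2}\ =:\ M,
\]
with an absolute implied constant (in particular, independent of the length $t-T_1$).

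Finally, I would integrate by parts: since $E(t)=\tilde E'(t)$ and $g$ is $C^1$ on $[T_1,T_2]$,
\[
\int_{T_1}^{T_2} g(t) E(t)\,dt\ =\ g(T_2)\tilde E(T_2)\ -\ g(T_1)\tilde E(T_1)\ -\ \int_{T_1}^{T_2} g'(t)\tilde E(t)\,dt.
\]
The boundary term at $T_1$ vanishes, and the remaining two terms are bounded by $|g(T_2)|\cdot M$ and $M\int_{T_1}^{T_2}|g'(t)|\,dt$ respectively, which together give precisely the error displayed in the statement.

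The only delicate point is to make sure the implied constant from Lemma~\ref{lem:GeneralMV} is truly absolute (and in particular uniform in the upper limit $t$) when applied to subintervals $[T_1,t]\subset[T_1,T_2]$; this is what allows $\tilde E(t)$ to be controlled pointwise by $M$ and is what makes the integration-by-parts argument go through. Once this uniformity is in hand, the remainder is bookkeeping.
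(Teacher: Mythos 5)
Your proposal is correct, but note that the paper does not actually prove Lemma~\ref{lem:parts}: it simply cites \cite[Lemma 4.1]{mililemma}, so there is nothing internal to compare against. What you have written is essentially the standard proof of that cited result, and it is the natural companion to Lemma~\ref{lem:GeneralMV}: apply the bilinear Montgomery--Vaughan estimate (in Tsang's form \cite{tsang}, whose error term carries an absolute implied constant independent of the interval length, which settles the uniformity point you flag) on the subinterval $[T_1,t]$ to get $\tilde E(t)\ll \bigl(\sum_n n|a_n|^2\bigr)^{1/2}\bigl(\sum_n n|b_n|^2\bigr)^{1/2}$ uniformly in $t$, then integrate by parts against $g$, using $\tilde E(T_1)=0$ to kill the lower boundary term. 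Two small remarks. First, you do not need the diagonal/off-diagonal rearrangement of the double sum (which would require absolute convergence to justify): simply define $E(t):=D_1(t)D_2(t)-\sum_n a_nb_n$ directly, so that $\tilde E$ is exactly the quantity Lemma~\ref{lem:GeneralMV} controls; in the paper's applications the sums are either finite or exponentially damped, so this is cosmetic, but defining $E$ this way makes the argument cleaner and more general. Second, the hypothesis that $g$ be real-valued is not needed for your integration by parts; it is only a convenience in how the error is stated, and indeed in Section~\ref{section:error} the paper takes absolute values before invoking the lemma precisely to land in the real-valued case.
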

\begin{proof} This is proved in \cite[Lemma 4.1]{mililemma}. \end{proof}

%%%%%%%%%%%%%%%%% PROOF OF SHIFTED MOMENT THEOREM %%%%%%%%%%%%%%%%%

\section{Proof of Theorem \ref{thm:shiftedmoment}}\label{sec:shiftproof}

In this section we prove Theorem~\ref{thm:shiftedmoment}. We first construct an approximate
functional equation for $L(s\!+\!\alpha, f)L(1\!-\!s\!+\!\beta, \overline{f})$ using a method of Ramachandra~\cite[Theorem 2]{ramachandra}.

\begin{lemma}\label{lem:ram}
Let $\alpha \in \C, s\!=\!1/2+it,$ and $T\geq2$. Then, for $X=T/(2\pi)$ and $T\leq t \leq 2T$, we have
\begin{multline}\label{eq:ramaformula}
  L(s\!+\!\alpha,f) \ = \  \sum_{n=1}^{\infty}\frac{a_f(n)}{n^{s+\alpha}}e^{-n/X} +
  \ep_f\Phi_f(s\!+\!\alpha)\sum_{n\leq X}\frac{a_{\bar f}(n)}{n^{1-s-\alpha}} \\
  - \frac1{2\pi i}
  \int_{-3/4-i\infty}^{-3/4+i\infty}\ep_f\Phi_f(s\!+\!\alpha\!+\!w)\left(\sum_{n>X}
    \frac{a_{\bar f}(n)}{n^{1-w-s-\alpha}}\right)\Gamma(w)X^w\d w \\
  - \frac1{2\pi i}
  \int_{1/2-i\infty}^{1/2+i\infty}\ep_f\Phi_f(s\!+\!\alpha\!+\!w)\left(\sum_{n\leq X}
    \frac{a_{\bar f}(n)}{n^{1-w-s-\alpha}}\right)\Gamma(w)X^w\d w.
\end{multline}
\end{lemma}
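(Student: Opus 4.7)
The plan is to derive~\eqref{eq:ramaformula} by Mellin inversion together with two contour shifts, interleaved with one application of the asymmetric functional equation~\eqref{eq:funeqasymmetric}.

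I would begin with the Mellin--Barnes representation
\[
e^{-n/X}\ =\ \frac{1}{2\pi i}\int_{(2)}\Gamma(w)\left(\frac{n}{X}\right)^{-w}\d w,
\]
multiply by $a_f(n)/n^{s+\alpha}$, and interchange sum and integral (justified on $\Re(w)=2$, where $\Re(s+\alpha+w)=5/2+\Re(\alpha)>1$), obtaining
\[
\sum_{n\geq 1}\frac{a_f(n)}{n^{s+\alpha}}e^{-n/X}\ =\ \frac{1}{2\pi i}\int_{(2)}L(s+\alpha+w,f)\Gamma(w)X^w\d w.
\]
Since $L(s,f)$ is entire by~\ref{hyp:entire}, the only singularity of the integrand in the strip $-3/4\leq\Re(w)\leq 2$ is the simple pole of $\Gamma(w)$ at $w=0$, with residue $L(s+\alpha,f)$. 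Shifting the contour to $\Re(w)=-3/4$ and rearranging produces
\[
L(s+\alpha,f)\ =\ \sum_{n\geq 1}\frac{a_f(n)}{n^{s+\alpha}}e^{-n/X}\,-\,\frac{1}{2\pi i}\int_{(-3/4)}L(s+\alpha+w,f)\Gamma(w)X^w\d w.
\]

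I would next apply~\eqref{eq:funeqasymmetric} on the new contour to substitute $L(s+\alpha+w,f)=\ep_f\Phi_f(s+\alpha+w)L(1-s-\alpha-w,\bar f)$. Since $\Re(1-s-\alpha-w)=5/4-\Re(\alpha)>1$ on $\Re(w)=-3/4$, the Dirichlet series for $L(1-s-\alpha-w,\bar f)$ converges absolutely, so I may expand it and split the sum at $n=X$. Keeping the tail $n>X$ on the contour $\Re(w)=-3/4$ gives the third term of~\eqref{eq:ramaformula}. For the head $n\leq X$, I shift the contour rightward to $\Re(w)=1/2$, crossing the simple pole of $\Gamma(w)$ at $w=0$; by the residue theorem this shift contributes $+\,\ep_f\Phi_f(s+\alpha)\sum_{n\leq X}a_{\bar f}(n)n^{s+\alpha-1}$, which is the second term of~\eqref{eq:ramaformula}, while the residual integral on $\Re(w)=1/2$ provides the fourth term. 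Assembling these four pieces yields the claim.

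The main technical obstacle is rigorous justification of the two horizontal-segment contributions as the imaginary height $U\to\infty$. Here $\Gamma(w)$ decays like $e^{-\pi U/2}$ by Stirling, while $\Phi_f(s+\alpha+w)$ grows only polynomially in $|U|+t$ (Lemma~\ref{lem:stirling}); $L(s+\alpha+w,f)$ on $\Re(w)=-3/4$ grows polynomially by the convexity bound (Lemma~\ref{lem:convexitybound}), and the truncated Dirichlet series $\sum_{n\leq X}a_{\bar f}(n)n^{-(1-s-\alpha-w)}$ appearing in the second shift is uniformly bounded in $w$ on any vertical strip. Exponential decay of $\Gamma$ dominates every polynomial factor, so the horizontal contributions vanish in the limit. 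One also verifies that $\Phi_f(s+\alpha+w)$ is holomorphic in the strip $-3/4\leq\Re(w)\leq 1/2$ for $|\alpha|\ll 1/\log T$: its potential poles from the numerator $\Gamma$-factors of $L_\infty(1-s-\alpha-w,\bar f)$ occur at $\Re(w)=1/2-\Re(\alpha)+2\Re(\overline{\mu_j})+2k$ for integers $k\geq 0$, all of which lie at or beyond $\Re(w)=1/2$ since $\Re(\overline{\mu_j})\geq 0$, with any boundary case resolved by a minor horizontal deformation.
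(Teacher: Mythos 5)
Your proposal is correct and takes essentially the same route as the paper's proof: the Mellin--Barnes representation of $e^{-n/X}$, a contour shift to $\Re(w)=-3/4$ collecting the residue $L(s+\alpha,f)$ at $w=0$, substitution of the asymmetric functional equation with the Dirichlet series split at $n=X$, and a shift of the $n\leq X$ piece to $\Re(w)=1/2$ producing the second term and the fourth integral. The extra justifications you supply (vanishing of the horizontal segments and holomorphy of $\Phi_f(s+\alpha+w)$ in the strip) only make explicit what the paper leaves implicit.
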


\begin{proof}
We first use the Mellin identity
\begin{equation}\label{eq:mellin}
e^{-t}\ = \ -\frac{1}{2\pi}\int_{2-i\infty}^{2+i\infty}\Gamma(w)t^{-w}\d w,
\end{equation}
which holds for $t\geq0$, to write
\begin{equation}\label{eq:first}
\sum_{n=1}^\infty \frac{a_f(n)}{n^{s+\alpha}}e^{-n/X}
\ = \ \frac{1}{2\pi i}\int_{2-i\infty}^{2+i\infty} L(s\!+\!\alpha\!+\!w,f)\Gamma(w)X^w\d w.
\end{equation}
Here the interchanging of the summation and integral is justified by the absolute convergence of the Dirichlet series. On the other hand, by shifting the line of integration from $\Re(w)=2$ to $\Re(w)=-3/4$, we find
\begin{equation}\label{eq:second}
\sum_{n=1}^\infty \frac{a_f(n)}{n^{s+\alpha}}e^{-n/X}\ = \
L\left(\!s\!+\!\alpha,f\right)+\frac{1}{2 \pi i}\int_{-3/4-i\infty}^{-3/4+i\infty}{L(s\!+\!\alpha\!+\!w,f)\Gamma(w)T^w\d w},
\end{equation}
where $L\left(s\!+\!\alpha,f\right)$ is the residue of the simple pole of the integrand of the right-hand side of \eqref{eq:first} at $w\!=\!0$. By the functional equation
for $L(s,f)$ and the absolute convergence of the Dirichlet series of $L(1\!-\!s,\bar f)$, we have
\begin{align}\label{eq:third}
L(s\!+\!\alpha\!+\!w,f) &\ = \  \ep_f\Phi_f(s\!+\!\alpha\!+\!w)L(1\!-\!s\!-\!\alpha\!-\!w,\bar{f})\notag\\
    & \ = \  \ep_f\Phi_f(s\!+\!\alpha\!+\!w)\sum_{n\leq X}
    \frac{a_{\bar f}(n)}{n^{1-w-s-\alpha}} + \ep_f\Phi_f(s\!+\!\alpha\!+\!w)\sum_{n>X}
    \frac{a_{\bar f}(n)}{n^{1-w-s-\alpha}}.
\end{align}
Replacing $L(s\!+\!\alpha\!+\!w,f)$ in \eqref{eq:second} with the right-hand side of \eqref{eq:third}, we have
\begin{multline}\label{eq:fourth}
\sum_{n=1}^\infty \frac{a_f(n)}{n^{s+\alpha}}e^{-n/X}\ = \
L\left(\!s\!+\!\alpha,f\right)+\frac{1}{2 \pi i}\int_{-3/4-i\infty}^{-3/4+i\infty}{\ep_f\Phi_f(s\!+\!\alpha\!+\!w)\sum_{n\leq X}
    \frac{a_{\bar f}(n)}{n^{1-w-s-\alpha}}\Gamma(w)T^w\d w}\\
    +\frac{1}{2 \pi i}\int_{-3/4-i\infty}^{-3/4+i\infty}{\ep_f\Phi_f(s\!+\!\alpha\!+\!w)\sum_{n>X}
    \frac{a_{\bar f}(n)}{n^{1-w-s-\alpha}}\Gamma(w)T^w\d w}.
\end{multline}
Finally, we shift the line of integration of the integral involving the Dirichlet series over $n\leq X$ from $\Re(w)=-3/4$ to $\Re(w)=1/2$. We once again pass over the pole at $w=0$ and recover the residue
\begin{equation}
\ep_f\Phi_f(s\!+\!\alpha)\sum_{n\leq X}\frac{a_{\bar f}(n)}{n^{1-s-\alpha}}.
\end{equation}
Upon rearranging terms, we deduce the claimed approximate functional equation for $L(s\!+\!\alpha,f)$.
\end{proof}

Applying Lemma~\ref{lem:ram} to $L(1\!-\!s\!+\!\beta, \overline{f})$, where $\beta\in \C$, we find
%\begin{equation}\label{eq:ramaformula}
%L(s\!+\!\alpha,f)L(1\!-\!s\!+\!\beta, \overline f) =
%\begin{aligned}[t]
%  &\begin{aligned}[t]
%    \Bigg(&\sum_{n\geq1}a_{f}(n)n^{-s-\alpha}e^{-n/X} +
%    \ep_f\Phi_f(s+\alpha)\sum_{n\leq X}a_{\overline f}(n)n^{s+\alpha-1} \\
%    &- \frac1{2\pi i}
%    \int_{\left(-\frac34\right)}\ep_f\Phi_f(s+\alpha+w)\left(\sum_{n>X}
%      a_{\overline f}(n)n^{w+s+\alpha-1}\right)\Gamma(w)X^w\d w \\
%    &- \frac1{2\pi i}
%    \int_{\left(\frac14\right)}\ep_f\Phi_f(s+\alpha+w)\left(\sum_{n\leq X}
%      a_{\overline f}(n)n^{w+s+\alpha-1}\right)\Gamma(w)X^w\d w\Bigg)\times
%  \end{aligned}
%  \\ \times
%  &\begin{aligned}[t]
%    \Bigg(
%    &\sum_{n \geq 1}a_{\overline{f}}(n)n^{-1+s-\beta}e^{-n/X} +\ep_{\overline{f}}
%    \Phi_{\overline{f}}(1-s+\beta)\sum_{n \leq X} a_{f}(n)n^{-s+\beta} \\
%    &-\frac{1}{2 \pi i}
%    \int_{\left(-\frac34\right)} \ep_{\overline{f}}
%    \Phi_{\overline{f}}(1-s+\beta+w)
%    \left(\sum_{n <X} a_{f}(n)n^{w-s+\beta} \right)\Gamma(w)X^{w}\d w \\
%    &- \frac{1}{2\pi i}
%    \int_{\left(\frac14\right)} \ep_{\overline{f}}
%    \Phi_{\overline{f}}(1-s+\beta+w)
%    \left(\sum_{n \leq X} a_{f}(n)n^{w-s+\beta} \right)\Gamma(w)X^{w}\d w
%    \Bigg)
%  \end{aligned}
%\end{aligned}
%\end{equation}
%We do not record all of the terms in the above product, but instead write the
%following to more clearly see the process of estimation:
\be\label{eqn:mixedfunctional}
\begin{aligned}[b]
  L(s\!+\!\alpha,f)L(1\!-\!s\!+\!\beta, \overline{f})\ := \ &
  \sum_{n=1}^{\infty}\frac{a_f(n)}{n^{s+\alpha}}e^{-n/X}
  \sum_{n= 1}^{\infty}\frac{a_{\bar{f}}(n)}{n^{1-s+\beta}}e^{-n/X} \\
  &+\Phi_f(s\!+\!\alpha)\Phi_{\bar{f}}(1\!-\!s\!+\!\beta)\sum_{n \leq X} \frac{a_{f}(n)}{n^{s-\beta}}\sum_{n\leq X}\frac{a_{\bar f}(n)}{n^{1-s-\alpha}}
  + \sum^{14}_{i=1} J_{i} \\
  \ := \ & S_1 + S_2 + \sum^{14}_{i=1} J_{i},
\end{aligned}\ee
say, where the $J_{i}$ for $1\leq i \leq 14$ denote the terms that arise as products of the integral
components of our mixed functional equation. In the next section, we show that $S_1$ and $S_2$ contribute to the main term in Theorem~\ref{thm:shiftedmoment}. The remaining $J_i$ terms are absorbed into the
error term. In Section ~\ref{section:error}, we give full details for the estimation of one of the $J_{i}$; the other estimations follow similarly or by an application of Cauchy-Schwartz.

%%%%%%%%%%%%%%%%% MAIN TERM CALCULATIONS %%%%%%%%%%%%%%%%%

\subsection{Main Term Calculations} \

Recalling $s=1/2+it$, let
\be
S_{1}\ := \  \left(\sum_{n\geq1}a_f(n)n^{-s-\alpha}e^{-n/X}\right)
\left(\sum_{n \geq 1}a_{\overline{f}}(n)n^{-1+s-\beta}e^{-n/X}\right).
\ee
Directly applying Lemma~\ref{lem:GeneralMV} yields
\be
\int^{2T}_{T} S_{1}\d t\ = \ T \sum_{n} \left|a_{f}(n)\right|^2
n^{-1-\alpha-\beta}e^{-2n/X}+
O\left(
    \left(\sum_{n} \left|a_{f}(n)\right|^2e^{-2n/X}n^{-2\Re(\alpha)}\right)^{\frac12}
    \left(\sum_{n} \left|a_f(n)\right|^2e^{-2n/X}n^{-2\Re(\beta)}\right)^{\frac12}
  \right).
\ee
With $X \sim T$, Lemma~\ref{lem:weightsum} allows us to conclude that
\be
  \int^{2T}_{T} S_{1}\d t
  \ = \ T\left(c_{f} \Gamma(-\alpha-\beta)\left(\frac{X}{2}\right)^{-\alpha-\beta}
    +L(1+\alpha+\beta, f \times \overline{f})\right)
  +O(T).
\ee
Supposing $|\alpha|, |\beta| \ll 1/\log T$,
\be
\int_T^{2T}S_1\d t\ \ll\ T \log T.
\ee

We now turn to the second product contributing to the main term in our shifted
moment result. Let
\be S_2 \ := \
\left( \ep_f\Phi_f(s+\alpha)
  \sum_{n\leq X}a_{\overline f}(n)n^{s+\alpha-1}\right)
\left(\ep_{\overline{f}}\Phi_{\overline{f}}(1-s+\beta)
  \sum_{n \leq X} a_{f}(n)n^{-s+\beta} \right).
\ee
Recalling that $\left|\ep_f\right|=1$, we use
Lemma~\ref{lem:stirling} to break up the integral and write
\be\begin{aligned}[b]
\int^{2T}_{T} S_{2}\d t
&\ = \ \int^{2T}_{T} \Phi_{f}
\left(\frac{1}{2}+\alpha + it \right)\Phi_{\overline{f}}
\left(\frac{1}{2}+\beta-it\right)
\left(\sum_{n \leq X} a_{\overline f}(n)n^{s+\alpha-1}\right)
\left(\sum_{n \leq X} a_{f}(n)n^{-s+\beta}\right)\d t \\
&\ = \ \begin{multlined}[t][0.8\textwidth]
  \int_T^{2T}\left(\frac{t}{2\pi}\right)^{-2(\alpha+\beta)}
  \left(\sum_{n \leq X} a_{\overline f}(n)n^{s+\alpha-1}\right)
  \left(\sum_{n \leq X} a_{f}(n)n^{-s+\beta}\right)\d t \\
  + O\left(\int^{2T}_{T} \left(\frac{t}{2\pi}\right)^{-2(\alpha+\beta)-1}
    \left(\sum_{n \leq X} a_{\overline f}(n)n^{s+\alpha-1}\right)
    \left(\sum_{n \leq X} a_{f}(n)n^{-s+\beta}\right)\right)\d t.
\end{multlined}
\label{eq:s2funerror}
\end{aligned}\ee
We evaluate the main term using \cite[Lemma 4.1]{mililemma}. We have
\be\begin{aligned}[b]\label{eq:s2mainterm}
  &\int_T^{2T}\left(\frac{t}{2\pi}\right)^{-2(\alpha+\beta)}
  \left(\sum_{n \leq X} a_{\overline f}(n)n^{s+\alpha-1}\right)
  \left(\sum_{n \leq X} a_{f}(n)n^{-s+\beta}\right)\d t \\
  &\ = \  \left(\int^{2T}_{T} \left(\frac{t}{2\pi}\right)^{-2(\alpha+\beta)}\d t \right)
  \sum_{n\leq X} \left|a_{f}(n) \right|^{2}n^{-1+\alpha+\beta} + \\
  &+O
  \left(
    \left(\left(\frac{T}{\pi}\right)^{-2(\alpha+\beta)}
      +\int^{2T}_{T}2|\alpha+\beta|
      \left(\frac{t}{2\pi}\right)^{-2\Re(\alpha+\beta)-1}\d t\right)
    \left(\sum_{n\leq X}\left|a_{f}(n)\right|^2n^{2\Re(\alpha)}\right)^{\frac12}
    \left(\sum_{n \leq X}\left|a_{f}(n)\right|^2n^{2\Re(\beta)}\right)^{\frac12}
  \right).
\end{aligned}\ee
Since $|\alpha|, |\beta| \ll 1/\log T$, we have
\be\left(\frac{T}{\pi}\right)^{-2(\alpha+\beta)}\ \ll\ 1\ee
and
\be
\int^{2T}_{T}2|\alpha+\beta|\left(\frac{t}{2\pi}\right)^{-2(\alpha+\beta)-1}\d t\ \ll\ 1.
\ee
Lemma~\ref{lem:truncatedsum} contains the estimates that allow us to
write~\eqref{eq:s2mainterm} as
\begin{multline}
\int^{2T}_{T} \left(\frac{t}{2\pi}\right)^{-2(\alpha+\beta)}
\left(\sum_{n \leq X} a_{\overline f}(n)n^{s+\alpha-1}\right)
\left(\sum_{n \leq X} a_{f}(n)n^{-s+\beta}\right)\d t \\
\ = \ \left(\int^{2T}_{T} \left(\frac{t}{2\pi}\right)^{-2(\alpha+\beta)}\d t \right)
\left(L(1-\alpha-\beta, f \times \overline{f})
  + X^{\alpha+\beta}\frac{c_{f}}{\alpha+\beta} \right)
+O(T) \ll T \log T.
\end{multline}

%%%%%%%%%%%%%%%%% ERROR TERM CALCULATIONS %%%%%%%%%%%%%%%%%

\subsection{Error Term Estimates}\label{section:error}

In this section we estimate a representative error term.
The remaining $J_{i}$ follow from a direct application of
Cauchy-Schwartz or from a similar argument.
Observe that none of the $J_i$ error terms contribute to the main term,
and none dominate the term 
\be \mathcal{J} \ := \  \left(\sum_{n\geq1}a_{\overline f}(n)n^{-s-\alpha}e^{-n/X}\right)
\left(\frac{1}{2\pi i}
  \int_{\left(\frac14\right)} \ep_{\overline{f}}
  \Phi_{\overline{f}}(1-s+\beta+w)
  \left(\sum_{n \leq X} a_{f}(n)n^{w-s+\beta} \right)\Gamma(w)X^{w}\d w
\right). \ee
This term does not occur in~\eqref{eq:ramaformula}, but it is clear that any $J_{i}$ is of the same order or dominated by $\mathcal{J}$.
The real difficulty in estimating $\int_T^{2T} \mathcal{J} \d t$ follows from
\be\label{eq:compositeintest}
\int_T^{2T} \int_{\left(\frac14\right)}\Phiidual(1-s+\beta+w)\Gamma(w)X^w\d w\d t
\ll X^{1/4}T^{1/2}, \ee
which is immediate from the estimate in Section~\ref{sec:control}. Refer to Section~\ref{sec:control} for full details.

Returning to $\int_T^{2T} \mathcal{J} \d t$, we interchange the sum and the integral.
Lemma~\ref{lem:parts} applies only to real-valued functions $g(t)$, so we
apply an absolute value, apply Lemma~\ref{lem:parts}, and then use
Lemma~\ref{lem:errorsum} and the estimate~\eqref{eq:compositeintest} to write
\be\begin{aligned}[b]
  &\int_T^{2T} \mathcal{J}\d t \\
  &\ = \ \int_T^{2T}
  \left(\sum_{n\geq1}\left|a_{\overline f}(n)\right|n^{-1/2-\alpha}e^{-n/X}\right)
  \left(\sum_{n \leq X} \left|a_{f}(n)\right|n^{-1/4+\beta} \right)
  \left(\frac{1}{2\pi i}
    \int_{\left(\frac14\right)} \ep_{\overline{f}}
    \Phi_{\overline{f}}(1-s+\beta+w)
    \Gamma(w)X^{w}\d w
  \right)\d t \\
  &\ll X^{1/4}
  \int_T^{2T}\frac{1}{2\pi i}\int_{\left(\frac14\right)}
    \left|\ep_{\overline{f}}\Phi_{\overline{f}}(1-s+\beta+w)\Gamma(w)X^{w}\right|\d w\d t
  +O(\mathfrak{h})
  = O\left(X^{1/2}T^{1/2}+\mathfrak{h}\right),
\end{aligned}\ee
where, if we put
\be g(t) \ := \  \int_{\left(\frac14\right)}
\left|\ep_{\overline{f}}\Phi_{\overline{f}}(1-s+\beta+w)\Gamma(w)X^{w}\right|\d w \ee
then
\be\mathfrak{h}
\ = \ \left(|g(2T)|+\int_T^{2T}\left|g'(t)\right|\d t\right)
\left(\sum_{n\geq1} \left|a_{\overline f}(n)\right|^2n^{-2\Re(\alpha)}e^{-2n/X}\right)
\left(\sum_{n\leq X}\left|a_{f}(n)\right|^2n^{\frac12+2\Re(\beta)}\right).\ee
The estimates for the sums are contained in Lemmas~\ref{lem:weightsum}
and~\ref{lem:truncatedsum}.
Utilizing the estimate in Lemma~\ref{lem:stirling} to
differentiate the computations in Section~\ref{sec:control} under the integral, we obtain
\be \int_T^{2T}\left|g'(t)\right|\d t\ \ll\ X^{\frac14}T^{-\frac12}.\ee
Thus, letting $X\sim T$,
\be \mathfrak{h}\ = \ X^{3/2}T^{-1/2} \sim T \ee
and we have proved that
\be \int_T^{2T}\mathcal{J}\d t\ \ll \ T. \ee

%%%%%%%%%%%%%%%%%% APPENDIX %%%%%%%%%%%%%%%%%

\appendix

\section{Additional lemmata and calculations}
In this appendix we state an effective version of Perron's Formula used in the proof of Lemma \ref{lem:truncatedsum}. We also provide the details for bounding $\int_{\left(\frac14\right)}\Phi_{\overline{f}}(1-s+\beta+w)\Gamma(w)\d w$, which is used in Section~\ref{section:error}.
\subsection{An Effective Perron Formula}

\begin{lemma}[Effective Perron]\label{lem:perron}
  Let $F(s)\ := \ \sum_{n=1}^\infty a_nn^{-s}$ be a Dirichlet series with finite abscissa
  of absolute convergence $\sigma_a$. Suppose that there exists some real number
  $\alpha \geq 0$ s.t.
  \begin{align*} (i)&\quad \sum_{n=1}^\infty |a_n| n^{-\sigma} \ll
    (\sigma - \sigma_a)^{-\alpha}&(\sigma>\sigma_a), \\
    \noalign{\text{and that $B$ is a non-decreasing function satisfying}}\\
    (ii)&\quad |a_n|\leq B(n)&(n\geq1). \end{align*}
  Then for $X\geq2, \gamma\geq 2,\sigma\leq\sigma_a,\kappa\ := \ \sigma_a-\sigma+1/\log X$,
  we have
  \be\label{eq:perron}
  \sum_{n\leq X}\frac{a_n}{n^s}\ = \ \frac{1}{2\pi i}\int_{\kappa-i\gamma}^{\kappa+i\gamma}
  F(s+w)X^w\frac{\d{w}}{w}
  +O\left(X^{\sigma_a-\sigma}\frac{(\log X)^\alpha}{\gamma}
    +\frac{B(2X)}{X^\sigma}\left(1+X\frac{\log \gamma}{\gamma}\right)\right).
  \ee
\end{lemma}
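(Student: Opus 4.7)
The plan is to interchange the integral with the Dirichlet series and estimate the resulting Perron-type kernel term by term, splitting the tail sum according to proximity to $X$. On the contour $\Re(w)=\kappa$ we have $\Re(s+w)=\sigma_a + 1/\log X > \sigma_a$, so $F(s+w)=\sum_n a_n n^{-s-w}$ is absolutely and uniformly convergent, and we may write
\[
\frac{1}{2\pi i}\int_{\kappa-i\gamma}^{\kappa+i\gamma}F(s+w)X^w\frac{dw}{w}
\ = \ \sum_{n=1}^\infty \frac{a_n}{n^s}\,I(X/n),
\qquad
I(y) := \frac{1}{2\pi i}\int_{\kappa-i\gamma}^{\kappa+i\gamma} y^w\,\frac{dw}{w}.
\]

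Next I would invoke the standard effective Perron kernel identity, obtained by closing the contour to the left when $y>1$ (picking up the residue $1$ at $w=0$) and to the right when $y<1$ (no residue), with trivial bounds on the horizontal segments:
\[
I(y) \ = \ \mathbf{1}[y>1] \ + \ O\Bigl(y^\kappa\min\bigl(1,(\gamma|\log y|)^{-1}\bigr)\Bigr)
\qquad (y\neq 1),
\]
together with the trivial bound $|I(y)|\ll y^\kappa\log(\gamma/\kappa)$ valid for all $y$. Substituting and extracting $\sum_{n<X}a_n n^{-s}$ as the main term (any $n=X$ contribution being absorbed into the error via $|a_X|\leq B(2X)$), the remaining task is to bound
\[
E \ := \ \sum_{n\neq X}\frac{|a_n|}{n^\sigma}(X/n)^\kappa\min\bigl(1,(\gamma|\log(X/n)|)^{-1}\bigr).
\]

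To bound $E$, I would split at $|n-X|=X/2$. The far range $|n-X|\geq X/2$ has $|\log(X/n)|$ bounded below, and hypothesis (i) applied at abscissa $\sigma+\kappa$ (with $\sigma+\kappa-\sigma_a=1/\log X$) gives
\[
\ll\ \frac{X^\kappa}{\gamma}\sum_n\frac{|a_n|}{n^{\sigma+\kappa}}
\ \ll\ \frac{X^{\sigma_a-\sigma}(\log X)^\alpha}{\gamma},
\]
matching the first error term in the lemma. For the near range $|n-X|<X/2$, I would use $|a_n|\leq B(2X)$ and split further at the threshold $|\log(X/n)|=1/\gamma$: the thin region $|\log(X/n)|\leq 1/\gamma$ contains at most $O(X/\gamma)$ integers $n$ with $n^\sigma\asymp X^\sigma$, each contributing $\ll B(2X)X^{-\sigma}\log\gamma$ via the trivial bound on $I$; in the complementary region the approximation $|\log(X/n)|\asymp|n-X|/X$ turns the tail into a harmonic sum of size $O(B(2X)X^{1-\sigma}(\log\gamma)/\gamma)$. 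Combined, these yield precisely the $B(2X)X^{-\sigma}(1+X\log\gamma/\gamma)$ summand.

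The principal technical hurdle is the last split: one must combine the trivial bound on $I(y)$ near $y=1$ with the rapidly-decaying bound away from it, placing the threshold at $|\log y|=1/\gamma$, so that the logarithmic factor in the final error comes out as $\log\gamma$ rather than $\log X$. All other steps are routine manipulations of Dirichlet series and contour integrals.
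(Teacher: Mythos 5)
Your argument is correct, but it is worth noting that the paper does not actually prove this lemma at all: its ``proof'' is a one-line citation to Tenenbaum (\S II.2.1, Corollary 2.1). What you have written is essentially a self-contained reconstruction of the standard proof of that cited corollary: interchange the (absolutely convergent, since $\Re(s+w)=\sigma_a+1/\log X$ on the contour) Dirichlet series with the truncated integral, apply the classical kernel estimate $I(y)=\mathbf{1}[y>1]+O\bigl(y^\kappa\min(1,(\gamma|\log y|)^{-1})\bigr)$, and split the resulting error sum into $|n-X|\geq X/2$ (handled by hypothesis (i) at abscissa $\sigma+\kappa$, giving $X^{\sigma_a-\sigma}(\log X)^\alpha/\gamma$ since $X^\kappa=eX^{\sigma_a-\sigma}$) and $|n-X|<X/2$ (handled by $|a_n|\leq B(2X)$ and the approximation $|\log(X/n)|\asymp|n-X|/X$, giving the harmonic-sum factor $\log\gamma$). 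This matches Tenenbaum's own route, so the comparison is simply: the paper outsources the lemma, you prove it. One small imprecision: in the thin region $|\log(X/n)|\leq 1/\gamma$ you should bound each term by $B(2X)X^{-\sigma}$ using the factor $\min(1,\cdot)=1$ already present in your $E$, rather than invoking the trivial bound on $I$ with a $\log\gamma$ per term --- the trivial bound actually carries $\log(\gamma/\kappa)\leq\log(\gamma\log X)$, and taken literally your per-term $\log\gamma$ would overshoot the stated error $B(2X)X^{-\sigma}(1+X\log\gamma/\gamma)$ in the regime $\gamma\gg X$; with the $\min=1$ bound the thin region contributes $O\bigl(B(2X)X^{-\sigma}(1+X/\gamma)\bigr)$ and everything closes as claimed (also note the implied constants pick up harmless dependence on $\sigma_a-\sigma$ through $2^\kappa$, which is immaterial in the paper's application).
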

\begin{proof} \cite[\S II.2.1, Corollary 2.1]{tenenbaum}. \end{proof}

\subsection{Controlling
  $\int_{\left(\frac14\right)}\Phi_{\overline{f}}(1-s+\beta+w)\Gamma(w)\d w$}\label{sec:control}
Recall that $|\beta|\ll 1/ \log T$ and $s=1/2+it$.
In this section we provide the details for the bound
\be \int_{\left(\frac14\right)}\Phiidual(1-s+\beta+w)\Gamma(w)\d w\ \ll\
t^{-\frac12}. \ee
Since
\be\Phi_f(s)\ = \ \frac{L_\infty(1-s,\overline f)}{L_\infty(s,f)},\ee
we have the relation \be\Phi_f(s)\ = \ \frac{1}{\Phi_{\overline f}(1-s)}.\ee
To ease our application of Stirling's formula by ensuring that we are always
applying it as $t\ra+\infty$, we begin by breaking up the integrals as
\begin{align}
  &\int_{\left(\frac14\right)}\Phiidual(1-s+\beta+w)\Gamma(w)\d w \notag\\
  &\ = \ \int_{-\infty}^t\Phiidual\left(\frac34+\beta+i(v-t)\right)\Gamma\left(\frac14+iv\right)\d v
  +\int_t^\infty\Phiidual\left(\frac34+\beta+i(v-t)\right)
  \Gamma\left(\frac14+iv\right)\d v.
\end{align}
We consider the first integral, which, changing variables, equals
\begin{align}
  &\int_{-t}^\infty\Phiidual\left(\frac34+\beta-i(v+t)\right)
  \Gamma\left(\frac14-iv\right)\d v \notag\\
  &\ = \ \int_{0}^\infty\Phiidual\left(\frac34+\beta-iv)\right)
  \Gamma\left(\frac14-i(v-t)\right)\d v \notag\\
  &\ = \ \int_{0}^t\Phiidual\left(\frac34+\beta-iv)\right)
  \Gamma\left(\frac14-i(v-t)\right)\d v
  +\int_{t}^\infty\Phiidual\left(\frac34+\beta-iv)\right)
  \Gamma\left(\frac14-i(v-t)\right)\d v. 
\end{align}
We first consider
\begin{align}
  &\int_{0}^t\Phiidual\left(\frac34+\beta-iv)\right)
  \Gamma\left(\frac14-i(v-t)\right)\d v \notag\\
  &\ll\ \int_{0}^tv^{-1/2-2\beta}\Gamma\left(1/4+i(t-v)\right)\d v \notag\\
  &\ll\ \int_{0}^tv^{-1/2-2\beta}
  \exp\left(\left(-\frac14+i(t-v)\right)\log\left(\frac14+i(t-v)\right)
    -\left(\frac14+i(t-v)\right)\right)\d v\\ &:= \mathcal{A}_1, \notag
    \end{align}
say. For the principal branch, we have
\begin{align}
  &\exp\left(\left(-\frac14+i(t-v)\right)\log\left(\frac14+i(t-v)\right)
    -\left(\frac14+i(t-v)\right)\right) \notag\\
  &\ll\ (it)^{-1/4+i(t-v)}\exp\left(\left(-\frac14+i(t-v)\right)
    \log\left(1-\frac i{4t}-\frac v t\right)-\frac14\right) \notag\\
  &\ll\ (it)^{-1/4+i(t-v)}\exp\left[\left(-\frac14+i(t-v)\right)
  \left(-\frac1 t\left(\frac i4+v\right)\right)-\frac14\right] \notag\\
  &\ll\ (it)^{-1/4+i(t-v)}\notag\\
  & \ll t^{-1/4}e^{-\pi(t-v)/2}.
\end{align}
Since $|\beta|\ll1/\log T$, we find
\begin{align}
  \mathcal{A}_1\ &\ll\ t^{-1/4}\int_{0}^tv^{-1/2-2\beta}e^{-\pi(t-v)/2}\d v
  \ \ll\ t^{-1/4}\int_{0}^tv^{-1/2}e^{-\pi(t-v)/2}\d v \notag\\
  &=\ 2t^{-1/4}\sqrt{\frac2\pi}F\left(\sqrt{\frac{\pi t}2}\right)\notag\\
  &\ll t^{-3/4},
\end{align}
where $F$ is Dawson's integral, which satisfies
    $F\left(\sqrt{\pi t/2}\right)\ll t^{-1/2}$. Next, we have
    \begin{align}
  &\int_{t}^\infty\Phiidual\left(\frac34+\beta-iv)\right)
  \Gamma\left(\frac14-i(v-t)\right)\d v \notag\\
  &\ll\ \int_{t}^\infty\frac{v^{-1/2-2\beta}\d v}
  {\Gamma\left(\frac34+i(v-t)\right)e^{\pi(v-t)}} \notag\\
  &\ll\ \int_{t}^\infty v^{-1/2-2\beta}(v-t)^{-1/4}e^{-\pi(v-t)/2}\d v \notag\\
  &= \ \int_0^\infty(v+t)^{-1/2-2\beta}v^{-1/4}e^{-\pi v/2}\d v \notag\\
  &\ll\ t^{-1/2}\int_0^\infty v^{-1/4}e^{-\pi v/2}\d v\notag\\
 & \ll t^{-1/2}.
\end{align}Finally, we have
\begin{align}
  &\int_t^\infty\Phiidual\left(\frac34+\beta+i(v-t)\right)
  \Gamma\left(\frac14+iv\right)\d v \notag\\
  &\ = \ \int_1^\infty\frac{\Gamma\left(1/4+i(v+t-1)\right)\d v}
  {\Phii\left(\frac14-\beta-i(v-1)\right)} \notag\\
  &\ll\ \int_1^\infty\frac{(i(v+t-1))^{-1/4+i(v+t-1)}\d v}
  {v^{1-2(1/4-\beta+i)}} \notag\\
  &\ll\ \int_1^\infty(v+t-1)^{-1/4}v^{-1/2-2\beta} e^{-\pi(v+t-1)/2}\d v \notag\\
  &\ll\ t^{-1/4}\int_1^\infty v^{-1/2} e^{-\pi(v+t)/2}\d v \notag\\
  &\ = \ t^{-1/4}e^{-\pi(1+t)/2}\left[\sqrt 2 e^{\pi/2}
    \erfc\left(\sqrt{\frac\pi2}\right)\right]\notag \\
  &\ \ll\ t^{-1/4}e^{-\pi t/2},
\end{align}
where $\erfc(z)$ is the complementary error function.

\vspace{.25in}

\noindent {\it Acknowledgements.} This work was supported in part by NSF Grants DMS1265673 and DMS1347804 and Williams College. We thank Hung Bui, Steve Gonek, Gergely Harcos, Winston Heap, Micah Milinovich, Jeremy Rouse, Frank Thorne, and Ben Weiss for a number of valuable conversations, and the referee for helpful comments on an earlier version.
\nocite{*}

\end{document}